 \DeclareFontFamily{U}{wncy}{}
    \DeclareFontShape{U}{wncy}{m}{n}{<->wncyr10}{}
    \DeclareSymbolFont{mcy}{U}{wncy}{m}{n}
    \DeclareMathSymbol{\Sha}{\mathord}{mcy}{"58} 
\theoremstyle{plain}
\newtheorem{theorem}{Theorem}[section]
\newtheorem{prop}[theorem]{Proposition}
\newtheorem{lemma}[theorem]{Lemma}
\newtheorem{cor}[theorem]{Corollary}
\newtheorem*{theoremintro*}{Theorem}
\newtheorem*{propintro*}{Proposition}
\theoremstyle{definition}
\newtheorem{defin}[theorem]{Definition}
\newtheorem{example}[theorem]{Example}
\newtheorem{examples}[theorem]{Examples}
\newtheorem{remark}[theorem]{Remark}
\newtheorem{remarks}[theorem]{Remarks}
\newtheorem{question}[theorem]{Question}
\newtheorem*{ack}{Acknowledgments}
\newcommand{\lgp}{\text{LGP}}
\newcommand{\C}{\mathbb C}
\newcommand{\F}{\mathbb F}
\newcommand{\HH}{\mathbb{H}}
\newcommand{\Q}{\mathbb Q}
\DeclareMathOperator{\Br}{\mathrm{Br}}
\newcommand{\Char}{\text{char }}
\title[Refined notions]{Two refined notions in quadratic form theory}
\date{\today}
\author[Cassady]{Connor Cassady}
\thanks{\textit{Mathematics Subject Classification} (2010): 11E04, 11E81, 14G12 (primary);  12J10, 13J10 (secondary).
\\ 
\textit{Key words and phrases}: quadratic forms, Witt index, local-global principles, field invariants, valued fields. \\
The author was supported in part by NSF grants DMS-1805439 and DMS-2102987.
}
\begin{document}

\maketitle
\vspace{-0.6cm}
\begin{abstract}
We use the Witt index to define and study a refined notion of the local-global principle for isotropy of quadratic forms over a field $k$ and to define and study refined versions of the $m$-invariant of $k$. We also explore connections between these refinements.
\end{abstract}

\section*{Introduction}
Given a regular quadratic form $q$ over a field $k$ of characteristic $\ne 2$, the Witt Decomposition Theorem \cite[Theorem~I.4.1]{lam} implies that there is a unique non-negative integer called the \textit{Witt index} of~$q$, denoted by $i_W(q)$, and an anisotropic quadratic form $q_{an}$ over $k$ such that
\[
    q \simeq i_W(q)\HH \perp q_{an}.
\]
Here $\simeq$ denotes isometry of quadratic forms, $\HH$ denotes the \textit{hyperbolic plane} $x^2 - y^2$, $\perp$ denotes the orthogonal sum of two quadratic forms, and $i_W(q)\HH$ denotes the orthogonal sum of $i_W(q)$ copies of~$\HH$. The quadratic form $q$ is isotropic if and only if $i_W(q) \geq 1$, so the Witt index serves as a measure of the isotropy of $q$. The goal of this article is to use the Witt index to define and study a refined notion of the local-global principle for isotropy of quadratic forms over~$k$ and to define and study refined versions of the $m$-invariant of $k$.

We will first focus on refining the local-global principle for isotropy of quadratic forms over a given field, phrased in terms of discrete valuations on the field. Let $k$ be a field of characteristic $\ne 2$ equipped with a non-empty set $V$ of non-trivial discrete valuations. We say that a quadratic form $q$ over $k$ satisfies the \textit{local-global principle for isotropy with respect to $V$} if $q$ being isotropic over the $v$-adic completion $k_v$ of $k$ for all $v \in V$ implies that $q$ is isotropic over $k$. This local-global principle has been well-studied, and in many instances counterexamples have been found (see, e.g., \cite{auel-suresh, cas, cps12, gup18, gup21, hhk15}). So in these cases, the Witt index of a quadratic form $q$ being at least one everywhere locally (i.e., over every $v$-adic completion) does not guarantee that $i_W(q) \geq 1$ globally (i.e., over $k$). One motivation, then, for introducing a refined notion of the local-global principle for isotropy is to determine if requiring a larger Witt index for~$q$ everywhere locally guarantees that $i_W(q) \geq 1$ globally, and more generally, for some $r, s$, if $i_W(q) \geq r$ everywhere locally implies that $i_W(q) \geq s$ globally. If this implication holds, we say that $q$ \textit{satisfies $\emph{LGP}(r,s)$ with respect to $V$} (see Definition \ref{lgp(r,s)}). We will see that in some instances this can be done (see, e.g., Proposition \ref{I^n neighbors and lgp}), but in other cases, counterexamples to this refined local-global principle continue to exist (see Theorem~\ref{ce to lgp}). These counterexamples can be thought of as ``worse" than counterexamples to the original local-global principle for isotropy since these counterexamples are ``more isotropic" everywhere locally while remaining anisotropic globally.

The next notion we will use the Witt index to refine is the $m$-invariant of a field $k$ of characteristic~$\ne 2$. Recall that a quadratic form $q$ over $k$ is \textit{universal} if for every $a \in k^{\times}$ there is some~$x$ such that $q(x) = a$, and the $m$-\textit{invariant} of $k$, denoted by $m(k)$, is defined to be the minimal dimension of an anisotropic universal quadratic form over $k$ \cite{m-inv}. By the First Representation Theorem \cite[Corollary~I.3.5]{lam}, a regular quadratic form $q$ over $k$ is universal if and only if $q \perp \langle -a \rangle$ is isotropic for all $a \in k^{\times}$. So the First Representation Theorem allows us to relate the Witt index to the study of universal quadratic forms and the $m$-invariant. We therefore use the Witt index to define generalized invariants $m_{i,j}(k)$ for integers $i, j \geq 1$ (see Definition \ref{refined m}) in a fashion similar to how we pass from the local-global principle for isotropy to $\lgp(r,s)$. We will show how the invariants $m_{i,j}(k)$ behave as we vary~$i$ and~$j$ (see, e.g., Proposition \ref{decreasing i in m_{i,j}} and Corollary \ref{exact values for m_{i,j} for large j}), relate~$m_{i,j}(k)$ to the $m$- and $u$-invariant of $k$ (see, e.g., Theorem \ref{upper and lower bounds}), and show that these invariants $m_{i,j}(k)$ provide more information than $m(k)$ and $u(k)$ (see Theorem \ref{refined m determine u} and Corollary \ref{m_{i,j} separates}).

\textbf{Main results and structure}. We begin by introducing notation and proving several preliminary results in Section \ref{notation}. 

In Section \ref{LGP} we introduce and study a refined local-global principle for isotropy (see Definition~\ref{lgp(r,s)}). We begin this study in Subsection \ref{counterexamples} by constructing numerous counterexamples to this refined local-global principle for isotropy over purely transcendental extensions of fields $\ell \in \mathscr{A}_i(2)$ for some $i \geq 0$ with respect to a small set of discrete valuations (see Sections \ref{notation} and \ref{LGP} for notation and terminology). For example, all $C_i$ fields satisfy property $\mathscr{A}_i(2)$.
\begin{theoremintro*}[\ref{ce to lgp}]
Let $\ell$ be a field of characteristic $\ne 2$. Assume that $\ell \in \mathscr{A}_i(2)$ for some $i \geq 0$ and $u(\ell) = 2^i$. For any integer $r \geq 1$ let $L_r = \ell(x_1, \ldots, x_r)$, and for $r \geq 2$ let $V_r$ be the set of non-trivial discrete valuations on $L_r$ that are trivial on $L_{r-1}$. Then for $r \geq 2$ and any integer~$n$ such that $0 \leq n < 2^{i+r-2}$, there exists a $\left(2^{i+r}-n\right)$-dimensional quadratic form over $L_r$ that violates $\emph{LGP}\left(2^{i+r-2} - n, 1\right)$ with respect to $V_r$.
\end{theoremintro*}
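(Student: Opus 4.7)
The strategy is a two-step construction. First, for the main case $n = 0$, I would build an anisotropic $2^{i+r}$-dimensional form $q$ over $L_r$ satisfying $i_W(q_{L_{r,v}}) \geq 2^{i+r-2}$ for every $v \in V_r$. Then, for $1 \leq n < 2^{i+r-2}$, the $(2^{i+r}-n)$-dimensional counterexample is obtained by deleting $n$ diagonal entries from $q$: the resulting form $q'$ is a subform of $q$, hence still anisotropic over $L_r$, has dimension $2^{i+r} - n$, and its Witt index at each completion $L_{r,v}$ drops by at most $n$, so $i_W(q'_{L_{r,v}}) \geq 2^{i+r-2} - n$ as required.

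For the main case, the first step is to leverage the hypothesis $\ell \in \mathscr{A}_i(2)$ with $u(\ell) = 2^i$ and the behaviour of $\mathscr{A}_i(2)$ and $u$-invariants under rational function field extensions (developed in Section~\ref{notation}) to deduce $u(L_r) = 2^{i+r}$ and the crucial bound $u(E_v) \leq 2^{i+r-1}$ for the residue field $E_v$ of every $v \in V_r$. By Springer's theorem, this yields the key automatic estimate: if $q$ is unramified at $v$ with residue of dimension $2^{i+r}$, then $i_W(q_{L_{r,v}}) \geq (2^{i+r} - u(E_v))/2 \geq 2^{i+r-2}$. Starting from an anisotropic $i$-fold Pfister form $\varphi_0$ over $\ell$, the plan is to take $q = \varphi_0 \otimes \pi$ where $\pi$ is an $r$-fold Pfister form over $L_r$ built from $x_1, \ldots, x_{r-1}$ and a carefully chosen rational function of $x_r$; at valuations $v \in V_r$ whose residue field properly contains $L_{r-1}$, $q$ can be arranged to be unramified with residue of full dimension $2^{i+r}$, so the above estimate closes the argument.

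The main obstacle lies at valuations $v \in V_r$ whose residue field equals $L_{r-1}$ itself --- namely $v_\infty$ and $v_{x_r - c}$ for each $c \in L_{r-1}$. For the naive choice $\pi = \Pfister{x_1, \ldots, x_r}$, Springer's theorem at $v_{x_r}$ produces residues that are anisotropic $(i+r-1)$-fold Pfister forms of dimension $u(L_{r-1})$ over $L_{r-1}$, contributing Witt index $0$ rather than the required $2^{i+r-2}$. Circumventing this is the technical heart of the construction: the $x_r$-slot in $\pi$ must be replaced by a polynomial $f(x_r)$ whose value at each $c \in L_{r-1}$ lies in the value set of the remaining $(i+r-1)$-fold Pfister factor over $L_{r-1}$ --- automatic by universality since that factor has dimension $u(L_{r-1})$ --- so that $q$ becomes unramified at each such $v$ with hyperbolic residue, while simultaneously keeping $q$ anisotropic over $L_r$. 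This latter preservation is governed by a Cassels--Pfister-type control of polynomial representation by Pfister forms, and balancing these infinitely many local hyperbolicities with the single global anisotropy condition is the step I expect to require the most care.
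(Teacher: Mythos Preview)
Your reduction from the case $n=0$ to general $n$ by passing to subforms (via Lemma~\ref{Witt index of sum}) is correct and is exactly what the paper does. The problem is in your construction for $n=0$.

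You take $q=\varphi_0\otimes\pi$ with $\varphi_0$ an anisotropic $i$-fold Pfister form over $\ell$ and $\pi$ an $r$-fold Pfister form over $L_r$, so that $q$ is an $(i+r)$-fold Pfister form. But a Pfister form over any field is either anisotropic or hyperbolic, so $i_W(q_v)\in\{0,\,2^{i+r-1}\}$ at each $v$. The requirement $i_W(q_v)\geq 2^{i+r-2}>0$ therefore forces $q_v$ to be \emph{hyperbolic} for every $v\in V_r$. Since the local-global principle for isometry holds over $L_r=L_{r-1}(x_r)$ with respect to $V_r$ (see Section~\ref{notation}), $q$ is then hyperbolic over $L_r$, contradicting the global anisotropy you need. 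No choice of $f(x_r)$, and no Cassels--Pfister argument, can rescue this: the Pfister shape itself is the obstruction, not the particular slot. (There is also a smaller issue: $u(\ell)=2^i$ does not guarantee that $\ell$ carries an anisotropic $i$-fold Pfister form.)

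The paper sidesteps this by abandoning the Pfister template in the last two variables. Writing $L_r=L_{r-2}(x_{r-1},x_r)$, it tensors an anisotropic $2^{i+r-2}$-dimensional form over $L_{r-2}$ (not assumed Pfister) with the \emph{non-Pfister} four-dimensional form
\[
\langle\, x_r+1,\ -x_{r-1}-x_r,\ x_{r-1},\ x_{r-1}x_r\,\rangle.
\]
This form is engineered so that at every $v\in V_r$ some pair of its entries already reduces to an isotropic binary form, which after tensoring yields $i_W(q_v)\geq 2^{i+r-2}$ (Lemma~\ref{local isotropy}); a direct two-step Springer computation (Lemma~\ref{anisotropic tensor product}) shows the tensor product stays anisotropic over $L_r$. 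Because the resulting $q$ is not a Pfister form, its local Witt indices can sit strictly between $0$ and $2^{i+r-1}$, and the isometry local-global principle no longer forces global hyperbolicity.
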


Inspired by the existence of these counterexamples, and by the notion of Pfister neighbors, in Subsection \ref{I^n-neighbors} we define the notion of an $I^n$-\textit{neighbor} (see Definition \ref{I^n neighbors}) and prove the following (see Sections \ref{notation} and \ref{LGP} for notation and terminology).
\begin{propintro*}[\ref{I^n neighbors and lgp}]
Let $k$ be a field of characteristic $\ne 2$ equipped with a non-empty set $V$ of non-trivial discrete valuations with respect to which the local-global principle for isometry holds. Let $q$ be an $I^n$-neighbor of complementary dimension $r$ over $k$ for some $n \geq 1$. Then
\[
    q \text{ satisfies } \emph{LGP}\left(\frac{\dim q + r - 2^n}{2} + 1, \frac{\dim q - r}{2}\right) \text{ with respect to $V$}.
\]
\end{propintro*}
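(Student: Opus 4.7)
The plan is to use the ambient $I^n$-form. By the definition of an $I^n$-neighbor of complementary dimension $r$, there is a form $\phi \in I^n(k)$ of dimension $\dim q + r$ with $\phi \simeq (\lambda q) \perp q'$ for some $\lambda \in k^\times$ and some form $q'$ of dimension $r$. Since scaling a quadratic form preserves isotropy and Witt indices over every field extension, I may replace $q$ by $\lambda q$ at the outset and work with $\phi \simeq q \perp q'$. The strategy then has two halves: force $\phi$ to be hyperbolic over every $k_v$ using the local Witt-index hypothesis on $q$ together with the Arason--Pfister Hauptsatz, and then globalize via the assumed local-global principle for isometry to extract the desired bound on $i_W(q)$.

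For the local step, fix $v \in V$ and assume $i_W(q_{k_v}) \geq r_0 := \tfrac{\dim q + r - 2^n}{2}+1$. Since $q_{k_v}$ embeds as a subform of $\phi_{k_v}$, one has $i_W(\phi_{k_v}) \geq r_0$, so
\[
    \dim (\phi_{k_v})_{an} \leq \dim \phi - 2r_0 = 2^n - 2 < 2^n.
\]
Since $I^n$ is stable under base change and Witt equivalence, $(\phi_{k_v})_{an}$ lies in $I^n(k_v)$; the Arason--Pfister Hauptsatz then forces $(\phi_{k_v})_{an} = 0$, i.e.\ $\phi_{k_v}$ is hyperbolic.

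For the global step, having $\phi_{k_v} \simeq \tfrac{\dim \phi}{2} \HH$ for every $v \in V$, the local-global principle for isometry yields $\phi \simeq \tfrac{\dim \phi}{2}\HH$ over $k$. Thus $q \perp q'$ is hyperbolic, so in the Witt ring of $k$ the class of $q$ equals that of $-q'$; passing to anisotropic kernels gives $q_{an} \simeq (-q')_{an}$, hence $\dim q_{an} \leq \dim(-q') = r$, and
\[
    i_W(q) = \tfrac{\dim q - \dim q_{an}}{2} \geq \tfrac{\dim q - r}{2} = s_0,
\]
which is the conclusion of $\lgp(r_0, s_0)$ with respect to $V$.

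The only nontrivial ingredient is the Arason--Pfister Hauptsatz; the threshold $r_0$ is calibrated precisely to push the anisotropic kernel of $\phi_{k_v}$ strictly below dimension $2^n$. Beyond getting this arithmetic to match up, I do not foresee a real obstacle: the globalization step is a direct invocation of the hypothesis on $V$, and the final manipulation is a routine cancellation of Witt classes.
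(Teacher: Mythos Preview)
Your argument is correct and follows essentially the same strategy as the paper: push the ambient $I^n$-form below dimension $2^n$ locally, invoke Arason--Pfister to get local hyperbolicity, globalize via the isometry local-global principle, and read off the Witt index of $q$. Two small remarks: the paper's definition of $I^n$-neighbor has $q \perp \sigma_r \in I^n(k)$ directly with no scalar $\lambda$, so your reduction step is unnecessary (though harmless); and where you apply Arason--Pfister straight to $(\phi_{k_v})_{an}$, the paper instead identifies the $2^n$-dimensional residual piece as a form in $\mathrm{GP}_n(k_v)$ and uses that isotropic Pfister forms are hyperbolic---your route is the more direct one.
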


To conclude Section \ref{LGP}, in Subsection \ref{all forms} we consider all quadratic forms over a field $k$ and prove
\begin{theoremintro*}[\ref{varying r,s together}]
    Let $k$ be a field of characteristic $\ne 2$ equipped with a non-empty set $V$ of non-trivial discrete valuations. If there are integers $r, s \geq 1$ such that all quadratic forms over~$k$ satisfy $\emph{LGP}(r, s)$ with respect to $V$, then for any integer $j$ (not necessarily positive) such that $r+j, s+j \geq 1$, all quadratic forms over $k$ satisfy $\emph{LGP}(r+j, s+j)$ with respect to $V$.
\end{theoremintro*}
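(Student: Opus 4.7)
The plan is to reduce the theorem to two ``$\pm 1$-shift'' claims that can then be iterated:
(1) if $\lgp(r,s)$ holds for all forms over $k$ with respect to $V$ and $r, s \geq 2$, then so does $\lgp(r-1, s-1)$; and
(2) if $\lgp(r,s)$ holds for all forms over $k$ with respect to $V$ and $r, s \geq 1$, then so does $\lgp(r+1, s+1)$.
Combining these by induction yields $\lgp(r+j, s+j)$ for every integer $j$ with $r+j, s+j \geq 1$, which is exactly what the theorem asserts.

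Claim (1) is the easy direction. Given any form $q$ with $i_W(q_v) \geq r-1$ for all $v \in V$, consider $q \perp \HH$. Its local Witt index is $i_W(q_v) + 1 \geq r$ at every $v$, so applying the hypothesis $\lgp(r,s)$ to $q \perp \HH$ gives $i_W(q \perp \HH) \geq s$. Since $i_W(q \perp \HH) = i_W(q) + 1$, we conclude $i_W(q) \geq s-1$, as desired.

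Claim (2) is the main difficulty, and the idea is to apply $\lgp(r,s)$ twice. Given $q$ with $i_W(q_v) \geq r+1$ for all $v$, $\lgp(r,s)$ applied to $q$ itself (noting $r+1 \geq r$) yields $i_W(q) \geq s$, so $q \simeq s\HH \perp q'$ for some form $q'$. Base changing gives $i_W(q'_v) = i_W(q_v) - s \geq r+1-s$, a bound that remains valid even when $r+1-s < 0$ because Witt indices are non-negative. Now pad $q'$ with $s-1$ hyperbolic planes: the form $q' \perp (s-1)\HH$ has local Witt index $(s-1) + i_W(q'_v) \geq r$ for every $v$, so a second application of $\lgp(r,s)$ gives $i_W(q' \perp (s-1)\HH) \geq s$. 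This forces $i_W(q') \geq 1$, and therefore $i_W(q) = s + i_W(q') \geq s+1$. The subtlety, which I expect to be the crux of the argument, is recognizing that padding with exactly $s-1$ hyperbolic planes precisely compensates for the $s$ hyperbolic planes extracted from $q$, leaving just enough local Witt index to reuse the baseline hypothesis.

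With both shift claims established, the theorem follows by a straightforward induction: starting from $\lgp(r,s)$, apply claim (2) repeatedly to obtain $\lgp(r+j, s+j)$ for $j \geq 0$, and apply claim (1) repeatedly, stopping before either coordinate drops below $1$, to obtain $\lgp(r+j, s+j)$ for negative $j$ with $r+j, s+j \geq 1$.
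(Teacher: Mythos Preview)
Your proof is correct and follows essentially the same two-step strategy as the paper: an upward shift (your Claim~(2), the paper's Lemma~\ref{increase r and s}) and a downward shift (your Claim~(1), the paper's Lemma~\ref{decreasing r and s}), iterated by induction. Your Claim~(1) matches the paper's argument verbatim. For Claim~(2), however, the paper uses a simpler maneuver than your ``extract $s$ planes, re-insert $s-1$'' trick: after applying $\lgp(r,s)$ once to get $i_W(q)\geq s\geq 1$, it peels off a \emph{single} hyperbolic plane, $q\simeq\HH\perp q'$, observes $i_W(q'_v)=i_W(q_v)-1\geq r$, and applies $\lgp(r,s)$ directly to $q'$ to obtain $i_W(q')\geq s$, hence $i_W(q)\geq s+1$. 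Your padding argument is valid but unnecessarily elaborate; what you called ``the crux'' dissolves once you realize one plane suffices.
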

Theorem \ref{varying r,s together} motivates the introduction of a new invariant, denoted by $l(k, V)$, which is defined to be the smallest $r \geq 1$ such that all quadratic forms over $k$ satisfy $\lgp(r,1)$ with respect to $V$ (see Definition~\ref{l-inv}). We give an upper bound for $l(k,V)$ in Proposition \ref{first upper bound on l}.

In Section \ref{refined m-inv} we use the Witt index to define and study refined versions of the $m$-invariant of a field $k$, denoted by $m_{i,j}(k)$ (see Definition \ref{refined m}). In Subsection \ref{refined m generalities} we prove general properties of these refined invariants, and the main result of this subsection gives bounds for these refined $m$-invariants in terms of the~$m$- and $u$-invariant (see Subsection \ref{refined m generalities} for notation and terminology).
\begin{theoremintro*}[\ref{upper and lower bounds}]
    Let $k$ be a field of characteristic $\ne 2$ with $u(k) < \infty$, and let $i,j \geq 1$ be any positive integers. Then
    \[
        \max\{1, m(k) + 2j - 1- i\} \leq m_{i,j}(k) \leq \max\{1, u(k) + 2j - 1 - i\}.
    \]
Moreover, if either $u(k) + 2j - 1- i \leq 1$ or $m(k) = u(k)$, then both inequalities above are equalities.
\end{theoremintro*}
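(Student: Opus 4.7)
The plan is to prove the upper bound by exhibiting an explicit form of the required dimension, to establish the lower bound via a reduction lemma for anisotropic forms (proved by induction on $i$), and then to read off the ``moreover'' clauses from the shape of the bounds.

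\textbf{Upper bound.} Let $D = \max\{1, u(k) + 2j - 1 - i\}$. I will construct a form $q$ of dimension $D$ with $i_W(q) < j$ and $i_W(q \perp \langle -a_1, \ldots, -a_i\rangle) \geq j$ for every choice of $a_1, \ldots, a_i \in k^\times$. Choose the minimal $r \geq 0$ such that $D - 2r \leq u(k)$; one checks that $r \leq j - 1$ (using $D \leq u(k) + 2j - 2$ when $D > u(k)$) and $0 \leq D - 2r \leq u(k)$. Let $q_0$ be an anisotropic form over $k$ of dimension $D - 2r$ (taken as a subform of a maximal anisotropic form) and set $q = r\HH \perp q_0$. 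Then $i_W(q) = r < j$, and the second summand of $q \perp \langle -a_1, \ldots, -a_i\rangle = r\HH \perp (q_0 \perp \langle -a_1, \ldots, -a_i\rangle)$ has dimension at least $u(k) + 2(j - r) - 1$. Since any form of dimension $\geq u(k) + 2e - 1$ has Witt index $\geq e$ (its anisotropic part has dimension $\leq u(k)$), we get $i_W(q \perp \langle -a_1, \ldots, -a_i\rangle) \geq r + (j - r) = j$.

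\textbf{Lower bound.} Let $q$ satisfy the defining conditions of $m_{i,j}(k)$ and decompose $q = r\HH \perp q_{an}$ with $r = i_W(q) \leq j - 1$ and $q_{an}$ anisotropic; set $e = j - r \geq 1$. Witt additivity rearranges the hypothesis on $q$ into $i_W(q_{an} \perp \langle -a_1, \ldots, -a_i\rangle) \geq e$ for all $a_\ell$. The lower bound therefore reduces to the following reduction lemma: \emph{if $q'$ is anisotropic over $k$ and $i_W(q' \perp \langle -a_1, \ldots, -a_i\rangle) \geq e$ for all $a_\ell \in k^\times$, then $\dim q' \geq m(k) + 2e - 1 - i$.} Indeed, this yields $\dim q = 2r + \dim q_{an} \geq 2r + m(k) + 2(j-r) - 1 - i = m(k) + 2j - 1 - i$.

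I plan to prove the reduction lemma by induction on $i$, asserted uniformly over all admissible $e \in \{1, \ldots, i\}$; note $e \leq i$ is automatic, since $i_W(q' \perp \psi) \leq \dim \psi$ whenever $q'$ is anisotropic (any totally isotropic subspace of $q' \perp \psi$ projects injectively to $\psi$). The base $i = 1$ forces $e = 1$ and is exactly the definition of $m(k)$. For the inductive step, fix $a_1 \in k^\times$ and set $q'' = q' \perp \langle -a_1\rangle$. If some $a_1$ leaves $q''$ anisotropic, then $q''$ has dimension $\dim q' + 1$ and satisfies the hypothesis with parameters $(i - 1, e)$---which is admissible since the anisotropy of $q''$ forces $e \leq i - 1$---and the induction hypothesis gives $\dim q' + 1 \geq m(k) + 2e - i$. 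Otherwise $q''$ is isotropic for every $a_1$, so $q'$ is universal and $i_W(q'') = 1$; then $q''_{an}$ has dimension $\dim q' - 1$ and satisfies the hypothesis with parameters $(i-1, e-1)$, and the induction hypothesis (for $e \geq 2$) or the direct estimate $\dim q' \geq m(k)$ (for $e = 1$) yields the desired inequality.

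For the ``moreover'' clauses: when $u(k) + 2j - 1 - i \leq 1$, the inequality $m(k) \leq u(k)$ forces both outer maxima to equal $1$; and when $m(k) = u(k)$ the two bounds coincide verbatim. The main obstacle is the case split in the reduction lemma: one must confirm that the ``remains anisotropic'' and ``becomes isotropic'' branches together cover every admissible pair $(i, e)$ and that the resulting induction parameters lie in range. The structural input that closes both branches is the bound $i_W(q' \perp \psi) \leq \dim \psi$ for anisotropic $q'$, which simultaneously constrains the Witt index in branch one and pins down $i_W(q'') = 1$ in branch two.
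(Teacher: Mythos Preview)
Your proof is correct. The upper bound construction is essentially the paper's (Lemma~\ref{upper bound on m_{i,j}}), just packaged as a single formula $q = r\HH \perp q_0$ rather than split into three cases; the ``moreover'' clause is handled identically.

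The lower bound is where your organization differs from the paper's. The paper proves two separate inequalities and chains them: first $m_{i,j}(k) \geq m_{1,j}(k) - i + 1$ (Proposition~\ref{decreasing i in m_{i,j}}, proved by iteratively adding entries to a form while keeping the Witt index below $j$), then $m_{1,j}(k) \geq m(k) + 2j - 2$ (Lemma~\ref{decreasing j in m_{1,j}}, proved by observing that a $(1,j)$-realizing form has Witt index exactly $j-1$ and peeling off the hyperbolic part). You instead work directly with the anisotropic part $q_{an}$ of an arbitrary $(i,j)$-realizing form and prove a single reduction lemma by induction on $i$, with a case split at each step: either some $\langle -a_1\rangle$ keeps the form anisotropic (your Case~A, which parallels the paper's iterative-addition argument) or every $\langle -a_1\rangle$ makes it isotropic, forcing universality and letting you strip a hyperbolic plane (your Case~B, which parallels the paper's $j$-reduction). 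Your approach is more self-contained and slightly more direct for this theorem; the paper's has the advantage that its intermediate inequality $m_{i,j}(k) \geq m_{i-r,j}(k) - r$ is reused elsewhere (e.g., in Lemma~\ref{linked field with m=6}).
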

We also show that if two fields have the same refined $m$-invariants, then they have the same $u$-invariant.
\begin{theoremintro*}[\ref{refined m determine u}] Let $k, k'$ be fields of characteristic $\ne 2$. If $m_{i,1}(k) = m_{i,1}(k')$ for all integers~$i \geq 1$, then $u(k) = u(k')$.
\end{theoremintro*}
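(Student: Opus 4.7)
The plan is to establish the characterization
\[
u(k) \;=\; \min\bigl\{\,i \geq 1 : m_{i,1}(k) = 1\,\bigr\},
\]
with the convention that $\min \emptyset = \infty$. This exhibits $u(k)$ as an explicit function of the sequence $(m_{i,1}(k))_{i\geq 1}$, so the hypothesis $m_{i,1}(k) = m_{i,1}(k')$ for all $i \geq 1$ immediately yields $u(k) = u(k')$. Everything therefore reduces to proving the equivalence $m_{i,1}(k) = 1 \Leftrightarrow u(k) \leq i$ for every $i \geq 1$.

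One direction is straightforward: if $u(k) \leq i$, I would take $q = \langle 1 \rangle$. For any $i$-dimensional quadratic form $\phi$ over $k$, the form $q \perp \phi$ has dimension $i+1 > u(k)$, hence is isotropic, so $i_W(q \perp \phi) \geq 1$. Thus $\langle 1 \rangle$ is a $1$-dimensional anisotropic form witnessing the condition defining $m_{i,1}(k)$, giving $m_{i,1}(k) = 1$.

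The main step is the converse: if $u(k) \geq i+1$, then $m_{i,1}(k) \geq 2$. My approach is to fix an anisotropic $(i+1)$-dimensional form $\psi$ over $k$ and diagonalize it as $\psi \simeq \langle b \rangle \perp \psi'$, where $b \in k^\times$ and $\psi'$ is anisotropic of dimension $i$. For any $c \in k^\times$, the scaled form $(c/b)\psi \simeq \langle c \rangle \perp \phi_c$, with $\phi_c := (c/b)\psi'$, is again anisotropic; in particular $\phi_c$ is an anisotropic $i$-dimensional form for which $i_W(\langle c \rangle \perp \phi_c) = 0$. Since $c$ was arbitrary, no $1$-dimensional anisotropic form $\langle c \rangle$ can satisfy the universally quantified condition in Definition \ref{refined m}, forcing $m_{i,1}(k) \geq 2$. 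The only real obstacle is this construction in the converse direction, but it is handled uniformly by scaling a single anisotropic $(i+1)$-dimensional form to hit any prescribed diagonal entry $c$; once both directions are combined, the characterization follows and the theorem is proved.
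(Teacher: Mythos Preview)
Your proof is correct and rests on the same key equivalence as the paper, namely $m_{i,1}(k) = 1 \Leftrightarrow u(k) \leq i$; the paper establishes this through Lemma~\ref{upper bound on m_{i,j}} (the easy direction) and Lemma~\ref{lower bound on m_{i,1} for small i} (the harder direction), and then runs a short case analysis on whether $u(k)$ or $u(k')$ is infinite. Your version is packaged more cleanly: by writing $u(k) = \min\{i \geq 1 : m_{i,1}(k) = 1\}$ you treat the finite and infinite cases uniformly, and your argument for the harder direction is slightly different and arguably more direct. Where the paper's Lemma~\ref{lower bound on m_{i,1} for small i} fixes a single anisotropic form of dimension $u(k)$ and invokes the fact that such a form is universal (so it represents any prescribed $a$), you instead fix an anisotropic $(i+1)$-dimensional form and \emph{scale} it to force any given $c \in k^\times$ into the first diagonal slot. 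Both arguments prove the same lemma; yours avoids appealing to universality and works for any $i < u(k)$ without first reducing to $i = u(k)-1$, while the paper's has the minor advantage of tying directly into the existing lemma structure.
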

In Subsection \ref{refined m of cdvf} we consider a complete discretely valued field $K$ with residue field $k$, and use particular refined $m$-invariants of $k$ to compute certain refined $m$-invariants of $K$.
\begin{theoremintro*}[Propositions \ref{m_{i,1} of cdvf for small i} and \ref{m_{i,1} of cdvf for large i}]
    Let $K$ be a complete discretely valued field with residue field $k$ of characteristic $\ne 2$ such that $u(k) < \infty$. Let $i$ be any integer such that $1 \leq i < u(K)$. Then
    \[
        m_{i,1}(K) = \begin{cases}
            \min_{1 \leq r \leq i} \{m_{r,1}(k) + m_{i-r+1,1}(k)\} &\text{ if $1 \leq i \leq u(k)$}, \\
        \min_{\lceil \frac{i}{2} \rceil \leq s \leq u(k)}\{m_{s,1}(k) + m_{i-s+1,1}(k), m_{i-u(k),1}(k)\} &\text{ if $u(k) < i < u(K)$.}
        \end{cases}
    \]
\end{theoremintro*}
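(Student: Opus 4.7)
The strategy is to exploit Springer's theorem over the complete discretely valued field $K$: every form over $K$ admits a decomposition $q \simeq q_1 \perp \pi q_2$ (with $\pi$ a uniformizer), where $q_1, q_2$ lift from forms over $k$ which may be taken anisotropic (one possibly zero), and $q$ is anisotropic over $K$ precisely when both $q_1, q_2$ are anisotropic over $k$. Moreover, Springer gives the Witt-index formula $i_W(\alpha_1 \perp \pi \alpha_2) = i_W(\alpha_1) + i_W(\alpha_2)$. Applying this to $q \perp \phi$ after writing $\phi = \phi_1 \perp \pi \phi_2$, the property defining $m_{i,1}(K)$ (namely, $q \perp \phi$ isotropic for every $i$-dimensional $\phi$ over $K$) is equivalent to: for each pair $(r_1, r_2)$ with $r_1 + r_2 = i$ and every pair of anisotropic forms $\phi_j$ of dimension $r_j$ over $k$, at least one of $q_1 \perp \phi_1$, $q_2 \perp \phi_2$ is isotropic. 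Isotropic $\phi$ over $K$ yield automatic isotropy of $q \perp \phi$, so only anisotropic $\phi$ matter, which forces $r_j \leq u(k)$.

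Next I introduce the auxiliary property $P_r(q'):$ ``$q' \perp \phi$ is isotropic for every $\phi$ of dimension $r$ over $k$,'' and let $t_j := \min\{r \ge 0 : P_r(q_j) \text{ holds}\}$. Three facts are used: $P_r$ is monotone ($P_r \Rightarrow P_{r+1}$), so $m_{r,1}(k)$ is non-increasing in $r$; for a nonzero anisotropic $q_j$ one has $1 \le t_j \le u(k)$ (since $P_{u(k)}$ is automatic by definition of $u$-invariant); and $\dim q_j \ge m_{t_j,1}(k)$ by definition of $m_{t_j,1}(k)$, with equality achievable by a suitable choice of $q_j$. A short logical negation shows the covering condition ``$r_1 \ge t_1$ or $r_2 \ge t_2$ for every valid $(r_1,r_2)$'' is equivalent to $t_1 + t_2 \le i+1$, together with the individual constraints coming from the endpoints $(0,i)$ and $(i,0)$ when those lie in the valid range.

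For Case 1, $1 \le i \le u(k)$, both endpoints $(0,i)$ and $(i,0)$ are valid ($r_j \le i \le u(k)$); the endpoint $(0,i)$ forces $P_i(q_2)$ and hence $q_2 \ne 0$, and symmetrically $q_1 \ne 0$. So both summands contribute, and one is reduced to the combinatorial problem of minimizing $m_{t_1,1}(k) + m_{t_2,1}(k)$ over $t_1, t_2 \ge 1$ with $t_1 + t_2 \le i+1$. Monotonicity of $m_{\bullet,1}(k)$ pushes the minimum to $t_1 + t_2 = i+1$, giving $\min_{1 \le r \le i}\{m_{r,1}(k) + m_{i-r+1,1}(k)\}$. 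Sufficiency is verified by lifting minimal $q_j$ satisfying $P_{t_j}$ and checking that $q_1 \perp \pi q_2$ is anisotropic over $K$ and meets the $m_{i,1}(K)$ condition via the cover equivalence. For Case 2, $u(k) < i < u(K) = 2u(k)$, the endpoints $(0,i)$ and $(i,0)$ now fall outside the valid $(r_1,r_2)$-range since $r_j \le u(k) < i$, so one of $q_1, q_2$ may be zero; the single-part option $q = \pi q_2$ (or $q_1$ alone) is binding at $r_1 = u(k)$, giving $\dim q_2 \ge m_{i-u(k),1}(k)$, while the both-nonzero option again reduces to the $\min_s\{m_{s,1}(k) + m_{i-s+1,1}(k)\}$ problem, now constrained by $t_j \le u(k)$. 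Taking the overall minimum of these two options produces the stated formula.

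The main obstacle is the careful boundary bookkeeping: correctly identifying when the endpoints $(0,i)$ and $(i,0)$ are ``active,'' handling the zero-form subcases (which account for the $m_{i-u(k),1}(k)$ term in Case 2), and justifying the ``$+1$'' shift (so that the indices sum to $i+1$ rather than $i$) via the cover condition. A secondary subtlety is verifying that the symmetric restriction on $s$ in the second formula does not discard any candidate achieving the minimum, which follows from the monotonicity of $m_{r,1}(k)$ together with the involution $s \leftrightarrow i - s + 1$.
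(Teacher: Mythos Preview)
Your proposal is correct and follows essentially the same approach as the paper: both arguments use Springer's theorem to decompose $q \simeq q_1 \perp \pi q_2$ and test against $i$-dimensional forms $\sigma_i \simeq \sigma_{i,1} \perp \pi\sigma_{i,2}$, reducing the problem to the residue field. Your packaging via the thresholds $t_j = \min\{r : P_r(q_j)\}$ and the covering equivalence $t_1 + t_2 \le i+1$ is a tidy reformulation of what the paper does through its ``compensating dimension'' lemma (Lemma~\ref{compensating dimension}) and the case analyses in Lemmas~\ref{preliminary lower bound}, \ref{lower bound for m_{i,1} for small i}, and~\ref{lower bound for m_{i,1} for large i}; in particular, the paper's key step ``$\dim q_1 < m_{s-1,1}(k)$ forces $\dim q_2 \ge m_{i-s+1,1}(k)$'' is exactly your covering condition read through the monotonicity of $m_{\bullet,1}(k)$.
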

To conclude Section \ref{refined m-inv}, in Subsection \ref{refined m separates} we show that the values of $u(k)$ and $m(k)$ for a field $k$ do not determine the values of the invariants $m_{i,j}(k)$.
\begin{propintro*}[Corollary \ref{m_{i,j} separates}]
There are fields $k_1, k_2$ of characteristic $\ne 2$ with $u(k_1) = u(k_2)$ and $m(k_1) = m(k_2)$, but $m_{i,1}(k_1) \ne m_{i,1}(k_2)$ for $i = 2,3$.
\end{propintro*}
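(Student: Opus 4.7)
My plan is to construct the fields $k_1, k_2$ as complete discretely valued fields $k_j = F_j((t))$ over carefully chosen residue fields $F_1, F_2$ of characteristic $\ne 2$, and then to read off the refined invariants from the recursive formula in Proposition \ref{m_{i,1} of cdvf for small i}. Under the identities $u(F((t))) = 2u(F)$ (Springer's theorem) and $m(F((t))) = 2m(F)$ (the $i = 1$ case of the CDVF formula, $m_{1,1}(K) = 2 m_{1,1}(k)$), the task reduces to finding residue fields $F_1, F_2$ with matching $u$- and $m$-invariants but differing in some higher refined invariant.

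Applying Proposition \ref{m_{i,1} of cdvf for small i} to $k_j = F_j((t))$ for $i = 2$ and $i = 3$ (assuming $u(F_j) \geq 3$) gives
\[
m_{2,1}(k_j) = m(F_j) + m_{2,1}(F_j), \qquad m_{3,1}(k_j) = \min\bigl\{m(F_j) + m_{3,1}(F_j),\; 2 m_{2,1}(F_j)\bigr\}.
\]
Hence if $F_1, F_2$ share $u$ and $m$ but satisfy $m_{2,1}(F_1) \ne m_{2,1}(F_2)$, then immediately $m_{2,1}(k_1) \ne m_{2,1}(k_2)$; with additional control on $m_{3,1}(F_j)$ one can also arrange $m_{3,1}(k_1) \ne m_{3,1}(k_2)$. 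By Theorem \ref{upper and lower bounds}, any such pair of residue fields must satisfy $m(F_j) < u(F_j)$, since in the equal case both the upper and lower bounds for $m_{i,1}$ coincide and the refined invariants are determined by $u$ and $m$ alone.

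The main obstacle is this base-case construction of $F_1, F_2$: producing two fields with matching $u$ and $m$ but different $m_{2,1}$ (and whose $m_{3,1}$-data propagate to a difference at level $k_j$). A natural source is to build the $F_j$ via function-field constructions---function fields of quadrics, or purely transcendental extensions---over a base field $\ell \in \mathscr{A}_i(2)$ as in Subsection \ref{counterexamples}, using the $I^n$-neighbor machinery of Subsection \ref{I^n-neighbors} to prescribe which low-dimensional forms remain anisotropic over $F_j$ and which become universal after adjoining a square class. Once such a base pair is exhibited and these refined invariants are computed, the CDVF formula immediately yields the corollary.
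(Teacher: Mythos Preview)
Your proposal has a genuine gap: the reduction is circular. You reduce the corollary to a ``base-case construction'' of residue fields $F_1, F_2$ with $u(F_1) = u(F_2)$, $m(F_1) = m(F_2)$, but $m_{2,1}(F_1) \ne m_{2,1}(F_2)$---yet any such pair $(F_1, F_2)$ already witnesses the corollary (at least for $i = 2$, and then $m_{3,1}(F_2) \leq m_{2,1}(F_2)$ by Lemma~\ref{initial inequalities}(a) handles $i = 3$ once you know $m_{3,1}(F_1) = m_{2,1}(F_1)$). The CDVF lift therefore contributes nothing; the entire content lies in exhibiting the base pair. Your sketch for that step (``function fields of quadrics, $I^n$-neighbor machinery'') is not a construction: nothing in Subsections~\ref{counterexamples} or~\ref{I^n-neighbors} gives the kind of control over $m_{2,1}$ versus $m_{1,1}$ that is needed here, and Theorem~\ref{upper and lower bounds} shows that generic considerations cannot separate them when $m < u$.

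The paper's proof is a direct construction of $k_1$ and $k_2$, with no CDVF step. For $k_1$ it takes a non-real linked field with $m(k_1) = 6$ (such fields exist by Hoffmann). Linkage forces every anisotropic five-dimensional form to be a Pfister neighbor, hence to embed in an anisotropic three-fold Pfister form and thus fail to be $(2,1)$- or $(3,1)$-realizing; this pins $m_{2,1}(k_1) = m_{3,1}(k_1) = 6$ (Lemma~\ref{linked field with m=6}). For $k_2$ the paper runs a Merkurjev-style iterated function-field tower (Proposition~\ref{merkurjev tower}), carefully designed so that a specific anisotropic five-dimensional form $\psi'$ becomes $(2,1)$-realizing while $m$ stays at $6$; this yields $m_{2,1}(k_2) = 5$ and hence $m_{3,1}(k_2) \leq 5$. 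Both constructions rely on substantial external input---structure theory of linked fields and anisotropy preservation over function fields of high-dimensional quadrics---that your proposal does not invoke.
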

Finally, in Section \ref{connections} we explore connections between these two refined notions, $\lgp(r,s)$ and $m_{i,j}(k)$. For example, we prove
\begin{theoremintro*}[\ref{going-down}]
   Let $k$ be a field of characteristic $\ne 2$, let $V$ be a non-empty set of non-trivial discrete valuations on $k$, let $i,j \geq 1$ be positive integers, and let $s$ be an integer such that $1 \leq s \leq i$. If all quadratic forms over $k$ of dimension $m_{i,j}(k) + s - 1$ satisfy $\emph{LGP}(r, j)$ with respect to $V$ for some integer $r \geq 1$, then so do all quadratic forms over $k$ of dimension $< m_{i,j}(k)$.
\end{theoremintro*}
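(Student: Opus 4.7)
The plan is to argue by contraposition. Suppose a quadratic form $q$ over $k$ with $\dim q < m_{i,j}(k)$ is a counterexample to $\lgp(r,j)$ with respect to $V$: that is, $i_W(q_v) \geq r$ for every $v \in V$ but $i_W(q) < j$. I would manufacture an orthogonal extension $q^* = q \perp \phi_1 \perp \cdots \perp \phi_N$ of $q$ with $\dim q^* = m_{i,j}(k) + s - 1$ and $i_W(q^*) < j$; because the Witt index is monotone under orthogonal extension, $i_W(q^*_v) \geq i_W(q_v) \geq r$ for every $v \in V$, and so $q^*$ becomes a counterexample to $\lgp(r,j)$ in dimension $m_{i,j}(k) + s - 1$, contradicting the hypothesis.

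The main tool is the minimality built into $m_{i,j}(k)$: for any quadratic form $p$ over $k$ with $\dim p < m_{i,j}(k)$ and $i_W(p) < j$, the defining condition of $m_{i,j}(k)$ must fail on $p$, producing some form $\phi$ of dimension $i$ with $i_W(p \perp \phi) < j$. Because $i_W$ is monotone under orthogonal extension, any subform $\phi' \subseteq \phi$ of dimension $d \in \{1, \dots, i\}$ still satisfies $i_W(p \perp \phi') \leq i_W(p \perp \phi) < j$.

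I would build the extension iteratively. Let $T := m_{i,j}(k) + s - 1 - \dim q$, which satisfies $T \geq s \geq 1$. Write $T = d_1 + \cdots + d_N$ with each $d_\ell \in \{1, \dots, i\}$ and $d_N = s$; such a partition exists because $s \leq i$ and the residual $T - s = m_{i,j}(k) - 1 - \dim q \geq 0$ can be split into pieces of size at most $i$. Setting $q_0 := q$ and $q_\ell := q_{\ell-1} \perp \phi_\ell$, at each step $\ell$ apply the minimality of $m_{i,j}(k)$ to $q_{\ell-1}$ — which is valid since the partial sum $d_1 + \cdots + d_{\ell-1} \leq T - s = m_{i,j}(k) - 1 - \dim q$ forces $\dim q_{\ell-1} < m_{i,j}(k)$, and $i_W(q_{\ell-1}) < j$ holds inductively — to obtain a dimension-$i$ form whose dimension-$d_\ell$ subform, taken as $\phi_\ell$, yields $i_W(q_\ell) < j$.

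The form $q^* := q_N$ then has dimension $\dim q + T = m_{i,j}(k) + s - 1$, satisfies $i_W(q^*_v) \geq r$ for every $v \in V$, and has $i_W(q^*) < j$, delivering the required contradiction with the hypothesis. The main obstacle I anticipate is the calibration of the final step: the constraint $1 \leq s \leq i$ is precisely what allows $d_N = s$ to both complete the partition of $T$ and fit as a subform of the dimension-$i$ form produced by minimality applied at the final intermediate form $q_{N-1}$ of dimension $m_{i,j}(k) - 1$.
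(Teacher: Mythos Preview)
Your proposal is correct and follows essentially the same approach as the paper. The paper packages the argument into two preliminary lemmas --- one producing a counterexample in dimension $n+s$ from one in dimension $n < m_{i,j}(k)$ (your single extension step), and one iterating the $s=1$ case to descend --- and then combines them; you instead run the full iterative extension from an arbitrary $n < m_{i,j}(k)$ up to $m_{i,j}(k)+s-1$ in one pass, with your partition $d_1,\ldots,d_N$ (last piece $s$) playing the same role as the paper's sequence of $1$'s followed by a final jump of $s$. The underlying mechanism --- the minimality of $m_{i,j}(k)$ furnishing an $i$-dimensional $\phi$ with $i_W(p\perp\phi)<j$ whenever $\dim p < m_{i,j}(k)$ and $i_W(p)<j$ --- is identical.
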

\section{Notation and preliminaries}
\label{notation}
All of the fields considered will have characteristic different from 2, and all quadratic forms considered (occasionally referred to just as forms) will be nondegenerate (or \textit{regular}). Our notation and terminology follows \cite{lam}, and we assume familiarity with basic notions of quadratic form theory (see, e.g., \cite[Chapter I]{lam}).

Let $q$ be an $n$-dimensional quadratic form over a field $k$ of characteristic $\ne 2$. Because $\Char k \ne 2$, we can diagonalize $q$ over $k$ and write $q \simeq \langle a_1, \ldots, a_n \rangle$ with each $a_i \in k^{\times}$. 
We call the elements $a_1, \ldots, a_n$ the \textit{entries} of $q$. If $K/k$ is any field extension, $q_K$ will denote the quadratic form $q$ considered as a quadratic form over $K$. For any $a \in k^{\times}$, we let $a \cdot q$ denote the form $\langle a \rangle \otimes q$, and $-q$ will denote the form $(-1) \cdot q$. Two quadratic forms $q_1, q_2$ over $k$ are \textit{similar} if $q_1 \simeq a \cdot q_2$ for some $a \in k^{\times}$. Given quadratic forms $q, \varphi$ over~$k$, we say that $\varphi$ is a \textit{subform} of $q$ if there exists some quadratic form $\psi$ over $k$ such that $q \simeq \varphi \perp \psi$. We say that an even-dimensional quadratic form~$\varphi$ over $k$ is \textit{hyperbolic} if $\varphi \simeq r\HH$ for some positive integer $r$, where $r\HH$ denotes the orthogonal sum of $r$ copies of $\HH$. Hence the Witt index $i_W(q)$ of a quadratic form $q$ over $k$ is half of the maximal dimension of a hyperbolic subform of $q$. The next two preliminary results about the Witt index will be used throughout this paper.

\begin{lemma}
\label{Witt index of sum}
Let $q$ and $\varphi$ be regular quadratic forms over a field $k$ of characteristic $\ne 2$. Then
\[
	i_W(q \perp \varphi) \leq i_W(q) + \dim \varphi.
\]
\end{lemma}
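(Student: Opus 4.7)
The plan is to reduce to the case where $q$ is anisotropic and then use a dimension-counting argument on totally isotropic subspaces.

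First I would apply the Witt Decomposition Theorem to write $q \simeq i_W(q)\HH \perp q_{an}$ with $q_{an}$ anisotropic, so that
\[
    q \perp \varphi \simeq i_W(q)\HH \perp (q_{an} \perp \varphi).
\]
By uniqueness in the Witt decomposition, the Witt index is additive on orthogonal sums of hyperbolic forms with arbitrary forms, which gives
\[
    i_W(q \perp \varphi) = i_W(q) + i_W(q_{an} \perp \varphi).
\]
So it suffices to show that $i_W(q_{an} \perp \varphi) \leq \dim \varphi$, i.e., to prove the lemma in the case where the first summand is anisotropic.

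For this reduction I would use the fact that $i_W$ equals the maximal dimension of a totally isotropic subspace. Let $\psi = q_{an}$ and let $W$ be a totally isotropic subspace of the underlying space $V_\psi \oplus V_\varphi$ of $\psi \perp \varphi$ realizing this dimension. Consider the projection $\pi \colon W \to V_\varphi$ onto the second summand. Any vector in $\ker \pi$ lies in $V_\psi$ and is isotropic for $\psi$; since $\psi$ is anisotropic this forces $\ker \pi = 0$, so $\pi$ is injective and
\[
    i_W(\psi \perp \varphi) = \dim W \leq \dim V_\varphi = \dim \varphi,
\]
which completes the proof.

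The argument has no real obstacle; the only step one must be careful about is the additivity $i_W(n\HH \perp \psi') = n + i_W(\psi')$, which follows from the uniqueness in the Witt Decomposition Theorem (already invoked in the introduction).
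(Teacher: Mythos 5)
Your proof is correct. The paper itself does not give an argument here, merely citing \cite[Exercise I.1.16(2)]{lam}, so you have supplied the standard proof that the exercise expects: first peel off the hyperbolic part of $q$ using uniqueness in the Witt decomposition to reduce to the case $q$ anisotropic, then observe that a maximal totally isotropic subspace of $q_{an} \perp \varphi$ must project injectively into the underlying space of $\varphi$ (anisotropy of $q_{an}$ kills the kernel), giving the bound $i_W(q_{an} \perp \varphi) \le \dim \varphi$. Both steps are sound, and this is essentially the argument the reference has in mind.
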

\begin{proof}
See, e.g., \cite[Exercise I.1.16(2)]{lam}.
\end{proof}
\begin{lemma}
\label{subform of hyperbolic form}
Let $n \geq 1$ be any positive integer, and for any $0 < r \leq n$, let $q$ be an $(n+r)$-dimensional regular subform of the hyperbolic form $n\HH$ over a field $k$ of characteristic $\ne 2$. Then $i_W(q) \geq r$.
\end{lemma}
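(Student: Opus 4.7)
The plan is to combine the Witt Decomposition Theorem with Lemma \ref{Witt index of sum} and Witt cancellation. Since $q$ is a subform of $n\HH$, by definition there is a form $\psi$ over $k$ with $n\HH \simeq q \perp \psi$, and comparing dimensions gives $\dim \psi = 2n - (n+r) = n - r \geq 0$.

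Next, I would apply the Witt Decomposition Theorem to $q$ to write $q \simeq i_W(q)\HH \perp q_{an}$ with $q_{an}$ anisotropic. Substituting this into the decomposition of $n\HH$ yields
\[
    n\HH \simeq i_W(q)\HH \perp q_{an} \perp \psi.
\]
Since $n \geq i_W(q)$, Witt cancellation (valid because $\Char k \ne 2$) allows me to cancel $i_W(q)$ copies of $\HH$ from both sides to obtain
\[
    (n - i_W(q))\HH \simeq q_{an} \perp \psi.
\]

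Finally, I would take Witt indices of both sides. The left side has Witt index exactly $n - i_W(q)$, while Lemma \ref{Witt index of sum} applied to the right side gives
\[
    i_W(q_{an} \perp \psi) \leq i_W(q_{an}) + \dim \psi = 0 + (n - r) = n - r,
\]
using that $q_{an}$ is anisotropic. Combining these yields $n - i_W(q) \leq n - r$, i.e.\ $i_W(q) \geq r$, as desired. There is no real obstacle here; the argument is a direct bookkeeping of hyperbolic planes, and the only subtlety worth flagging is ensuring that $n \geq i_W(q)$ so that the cancellation step is legitimate, which follows from $q$ being a subform of $n\HH$.
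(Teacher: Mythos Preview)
Your argument is correct: writing $n\HH \simeq q \perp \psi$ with $\dim \psi = n-r$, applying Witt decomposition to $q$, cancelling $i_W(q)$ hyperbolic planes, and then bounding $i_W(q_{an} \perp \psi)$ via Lemma~\ref{Witt index of sum} yields $n - i_W(q) \leq n - r$ as claimed; the justification $i_W(q) \leq \lfloor (n+r)/2 \rfloor \leq n$ for the cancellation step is fine. The paper itself does not give a proof but only cites \cite[Exercise~I.1.14]{lam}, so your write-up supplies a complete argument where the paper defers to a reference.
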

\begin{proof}
See, e.g., \cite[Exercise I.1.14]{lam}.
\end{proof}

One measure of the complexity of quadratic forms over a field $k$ is the $u$-\textit{invariant} of $k$, denoted by $u(k)$, which is defined to be the maximal dimension of an anisotropic quadratic form over $k$. If no such maximum exists, we say that $u(k) = \infty$ \cite[Definition~XI.6.1]{lam}. In general, computing the $u$-invariant of a given field is a challenging problem. In Subsection \ref{counterexamples}, we will work with fields~$\ell$ of known $u$-invariant. Indeed, in Subsection \ref{counterexamples} we consider fields $\ell$ that satisfy property~$\mathscr{A}_i(2)$ for some $i \geq 0$ (such fields were also studied in, e.g., \cite{cas, leep}). Recall that, for any integer $i \geq 0$, a field $\ell$ of characteristic $\ne 2$ satisfies property $\mathscr{A}_i(2)$ (written $\ell \in \mathscr{A}_i(2)$) if every system of~$s$ quadratic forms over $\ell$ in $n > s \cdot 2^i$ common variables has a non-trivial simultaneous zero in an odd degree extension of $\ell$. By \cite[Proposition~2.2]{leep}, if $\ell \in \mathscr{A}_i(2)$, then $u(\ell) \leq 2^i$. Moreover, if $\ell \in \mathscr{A}_i(2)$ for some $i \geq 0$ and $L$ is a finitely generated transcendence degree $r$ extension of $\ell$, then $L \in \mathscr{A}_{i+r}(2)$ \cite[Theorems~2.3, 2.5]{leep}, hence $u(L) \leq 2^{i+r}$.

The next result, which will be used at various points of this article, shows how the $u$-invariant can be used to study the Witt index.
\begin{lemma}
\label{Witt index of large dimensional forms}
Let $k$ be a field of characteristic $\ne 2$ with $u(k) < \infty$. For any integer $j \geq 1$ and any quadratic form $q$ over $k$, if $\dim q \geq u(k) + 2j - 1$, then $i_W(q) \geq j$.
\end{lemma}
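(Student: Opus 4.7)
The plan is to invoke the Witt Decomposition Theorem stated in the introduction. Writing $q \simeq i_W(q)\HH \perp q_{an}$ with $q_{an}$ anisotropic, we obtain the basic dimension identity $\dim q = 2\,i_W(q) + \dim q_{an}$.

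Next I would use the hypothesis that $q_{an}$ is anisotropic together with the definition of the $u$-invariant to conclude $\dim q_{an} \leq u(k)$. Combining this with the dimension identity and the hypothesis $\dim q \geq u(k) + 2j - 1$ yields
\[
    2\,i_W(q) = \dim q - \dim q_{an} \geq (u(k) + 2j - 1) - u(k) = 2j-1.
\]

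Finally, since $i_W(q)$ is an integer, the inequality $i_W(q) \geq j - \tfrac{1}{2}$ forces $i_W(q) \geq j$, as desired. There is no real obstacle here: the only subtlety is remembering that $\dim q_{an} \leq u(k)$ requires $u(k) < \infty$ (which is in the hypotheses) and that the half-integer improvement to $j$ comes for free from integrality of the Witt index.
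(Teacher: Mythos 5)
Your proof is correct, and it takes a different (and slightly slicker) route than the paper. You appeal directly to the Witt Decomposition Theorem to write $\dim q = 2\,i_W(q) + \dim q_{an}$, bound $\dim q_{an} \leq u(k)$ by anisotropy and the definition of the $u$-invariant, and then close the gap from $2j-1$ to $2j$ using integrality of $i_W(q)$. The paper instead argues by induction on $j$: for the base case it uses only that $\dim q > u(k)$ forces isotropy, and for the inductive step it peels off one hyperbolic plane at a time. Both proofs hinge on the same underlying fact (anisotropic forms over $k$ have dimension $\leq u(k)$), but your one-shot argument avoids the induction entirely, trading the iterated peeling of hyperbolic planes for a single appeal to the decomposition plus the parity observation. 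Either is perfectly adequate for the paper's purposes; yours is arguably the more transparent one.
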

\begin{proof}
We prove the lemma by induction on $j \geq 1$. For the base case $j = 1$, let $q$ be any quadratic form over $k$ with $\dim q \geq u(k) + 2 - 1 = u(k) + 1$. Then because $\dim q > u(k)$, $q$ must be isotropic, hence $i_W(q) \geq 1$.

Now assume for some $j \geq 1$ that all quadratic forms over $k$ of dimension $\geq u(k) + 2j - 1$ have Witt index at least $j$, and let $q$ be any quadratic form over $k$ of dimension $\geq u(k) + 2(j+1) - 1$. Then because $\dim q > u(k)$, $q$ must be isotropic and we can write $q \simeq \HH \perp q'$ for some form $q'$ over~$k$ with $\dim q' \geq u(k) + 2j - 1$. By the induction hypothesis, $i_W(q') \geq j$, thus $i_W(q) \geq j + 1$, proving the claim by induction.
\end{proof}

Throughout this manuscript we will be considering quadratic forms over complete discretely valued fields. For a field $k$ equipped with a discrete valuation $v$, we let $\mathcal{O}_v$ denote the valuation ring of $v$ with maximal ideal $\mathfrak{m}_v$. We let $\kappa_v = \mathcal{O}_v / \mathfrak{m}_v$ be the residue field, and let $k_v$ be the $v$-adic completion of $k$. For a quadratic form $q$ over $k$, we write $q_v$ for the quadratic form $q_{k_v}$. Over the field $K= k(t)$, each monic irreducible polynomial $\pi \in k[t]$ induces a discrete valuation $v_{\pi}$ on $K$, and we will denote the residue field of $v_\pi$ by $\kappa_{\pi} \cong k[t] / (\pi)$.

A particularly useful tool when studying quadratic forms over complete discretely valued fields is Springer's Theorem \cite[Proposition~VI.I.9]{lam}. Springer's Theorem states that, over a complete discretely valued field $K$ with uniformizer $\pi$, valuation ring $\mathcal{O}_K$, and residue field~$\kappa$ with characteristic $\ne 2$, a quadratic form $q \simeq q_1 \perp \pi \cdot q_2$ (where the entries of $q_1$ and $q_2$ are all units in~$\mathcal{O}_K$) is anisotropic over $K$ if and only if both residue forms $\overline{q}_1$ and $\overline{q}_2$ are anisotropic over $\kappa$.

At various points of this paper we will need to use another local-global principle for quadratic forms: the local-global principle for isometry. Given a pair of quadratic forms $q_1$, $q_2$ over a field~$k$ of characteristic $\ne 2$ equipped with a non-empty set $V$ of non-trivial discrete valuations, we say that~$q_1$ and $q_2$ satisfy the \textit{local-global principle for isometry with respect to $V$} if $q_1$ and $q_2$ being isometric over $k_v$ for all $v \in V$ implies that $q_1$ and $q_2$ are isometric over $k$. Two quadratic forms $q_1, q_2$ over~$k$ are isometric if and only if $q_1$ and $q_2$ have the same dimension and $q_1 \perp -q_2$ is hyperbolic. So the local-global principle for isometry with respect to $V$ is satisfied over $k$ if and only if every even-dimensional quadratic form over $k$ that is hyperbolic over $k_v$ for all $v \in V$ is also hyperbolic over $k$. If the local-global principle for isotropy is satisfied by all quadratic forms over $k$, then the local-global principle for isometry is satisfied as well. However, there are fields~$k$ equipped with sets $V$ of discrete valuations with respect to which the local-global principle for isometry holds, but there exist quadratic forms over $k$ that violate the local-global principle for isotropy with respect to $V$. For instance, for any field $k$ of characteristic $\ne 2$, over $K = k(t)$ the local-global principle for isometry holds with respect to the set $V_{K/k}$ of non-trivial discrete valuations on $K$ that are trivial on~$k$ (see, e.g., \cite[Proposition~2.1a)]{cas}). But for various fields $k$, there are counterexamples to the local-global principle for isotropy over $K = k(t)$ with respect to $V_{K/k}$ (see, e.g., \cite{auel-suresh, cas, gup18, gup21}).

In Subsection \ref{I^n-neighbors} we will consider quadratic forms that belong to powers $I^n(k)$ of the fundamental ideal $I(k)$ of even-dimensional quadratic forms in the Witt ring $W(k)$ of a field $k$ of characteristic $\ne 2$. Given a quadratic form $q$ over $k$, we will write $q \in I^n(k)$ if the Witt equivalence class $[q]$ of $q$ belongs to $I^n(k)$. Recall that, for any integer $n \geq 1$, $I^n(k)$ is additively generated by $n$-fold Pfister forms; i.e., by quadratic forms of the form $\langle 1, a_1 \rangle \otimes \cdots \otimes \langle 1, a_n \rangle =: \langle \langle a_1, \ldots, a_n \rangle \rangle$, where $a_1, \ldots, a_n \in k^{\times}$.

\section{Refined local-global principle for isotropy}
\label{LGP}
Let $k$ be a field of characteristic $\ne 2$ equipped with a non-empty set $V$ of non-trivial discrete valuations. We can phrase the local-global principle for isotropy with respect to $V$ in terms of the Witt index. Indeed, a quadratic form $q$ over $k$ satisfies the local-global principle for isotropy with respect to $V$ if and only if $i_W(q_v) \geq 1 \text{ for all $v \in V$ implies } i_W(q) \geq 1$. We can therefore use the Witt index to define a refined local-global principle for isotropy as follows.
\begin{defin}
\label{lgp(r,s)}
Let~$k$ be a field of characteristic~$\ne 2$, let~$V$ be a non-empty set of non-trivial discrete valuations on~$k$, and let~$r, s \geq 1$ be positive integers. We say that a quadratic form~$q$ over~$k$ \textit{satisfies $\emph{LGP}(r,s)$ with respect to $V$} if 
\[
	i_W(q_v) \geq r \text{ for all $v \in V$ implies } i_W(q) \geq s.
\]
\end{defin}
In particular,~$q$ satisfies the local-global principle for isotropy if and only if it satisfies $\lgp(1,1)$.

\begin{remark}
Let~$k$ be a field of characteristic~$\ne 2$ equipped with a non-empty set~$V$ of non-trivial discrete valuations. Let~$q$ be a quadratic form over~$k$ and let~$r, s \geq 1$ be any positive integers. For any integer~$r' > r$, if~$q$ satisfies~$\lgp(r, s)$ with respect to~$V$, then~$q$ satisfies~$\lgp(r', s)$ with respect to~$V$ as well, and for any integer~$s' > s$, if~$q$ satisfies~$\lgp(r, s')$ with respect to~$V$, then it also satisfies~$\lgp(r, s)$ with respect to~$V$.
\end{remark}
This section is organized as follows. In Subsection \ref{counterexamples} we show that there are numerous counterexamples to $\lgp(r,1)$ for various integers $r$ over purely transcendental field extensions of fields $\ell \in \mathscr{A}_i(2)$ for some $i \geq 0$ (see Theorem \ref{ce to lgp}). Inspired by the existence of these counterexamples, in Subsection \ref{I^n-neighbors} we find a certain condition that a quadratic form $q$ can satisfy to guarantee that $q$ satisfies $\lgp(r,s)$ for some integers $r, s \geq 1$. This condition is that of being an $I^n$-\textit{neighbor} (see Definition~\ref{I^n neighbors}). To conclude, in Subsection \ref{all forms} we investigate whether there are integers $r, s \geq 1$ such that \textit{all} quadratic forms over a field $k$ satisfy $\lgp(r,s)$ with respect to a non-empty set $V$ of non-trivial discrete valuations on $k$.
\subsection{Counterexamples}
\label{counterexamples}
The main goal of this subsection is to prove the following result.
\begin{theorem}
\label{ce to lgp}
Let $\ell$ be a field of characteristic $\ne 2$. Assume that $\ell \in \mathscr{A}_i(2)$ for some $i \geq 0$ and $u(\ell) = 2^i$. For any integer $r \geq 1$ let $L_r = \ell(x_1, \ldots, x_r)$, and for $r \geq 2$ let $V_r$ be the set of non-trivial discrete valuations on $L_r$ that are trivial on $L_{r-1}$. Then for $r \geq 2$ and any integer $n$ such that $0 \leq n < 2^{i+r-2}$, there exists a $\left(2^{i+r}-n\right)$-dimensional quadratic form over $L_r$ that violates $\emph{LGP}\left(2^{i+r-2} - n, 1\right)$ with respect to $V_r$.
\end{theorem}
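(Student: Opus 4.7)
The approach begins with a reduction: it suffices to construct, for each $r \geq 2$, a $2^{i+r}$-dimensional anisotropic form $Q$ over $L_r$ satisfying $i_W(Q_v) \geq 2^{i+r-2}$ for every $v \in V_r$. Indeed, given such $Q$, for any integer $0 \leq n < 2^{i+r-2}$ one deletes $n$ diagonal entries of $Q$ to obtain a subform $q$ of dimension $2^{i+r}-n$. The subform $q$ is anisotropic (as a subform of an anisotropic form), and writing $Q \simeq q \perp \varphi$ with $\dim \varphi = n$ and applying Lemma~\ref{Witt index of sum} over each completion $(L_r)_v$ gives $i_W(q_v) \geq i_W(Q_v) - n \geq 2^{i+r-2} - n$. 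Hence $q$ violates $\lgp(2^{i+r-2} - n, 1)$ with respect to $V_r$.

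Next, I would reduce to the case $r=2$ via base change. Set $\ell' := L_{r-2}$. By \cite[Theorems~2.3,~2.5]{leep} we have $\ell' \in \mathscr{A}_{i+r-2}(2)$, so $u(\ell') \leq 2^{i+r-2}$. Choosing an anisotropic form $q_0$ over $\ell$ of dimension $u(\ell) = 2^i$, iterated Springer's theorem shows that $q_0 \otimes \langle\langle x_1, \ldots, x_{r-2}\rangle\rangle$ is anisotropic over $L_{r-2}$, giving the matching lower bound $u(\ell') = 2^{i+r-2}$. Identifying $L_r = \ell'(x_{r-1}, x_r)$ and $L_{r-1} = \ell'(x_{r-1})$, the set $V_r$ coincides exactly with the ``$V_2$'' for the base field $\ell'$. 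So the statement for general $r$ reduces to the $r=2$ case applied over $\ell'$, and it suffices to prove the theorem in the case $r=2$ for an arbitrary field $\ell$ satisfying the hypotheses.

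For the $r=2$ case, I would take $Q = q_0 \otimes \varphi$, where $\varphi$ is a $4$-dimensional quadratic form over $L_2$ that is anisotropic over $L_2$ but isotropic over $(L_2)_v$ for every $v \in V_2$ — a counterexample to $\lgp(1,1)$ with respect to $V_2$, whose existence for $\ell$ as in our hypotheses follows from the constructions in \cite{cas, auel-suresh}. Then $\dim Q = 2^{i+2}$, and since $\varphi_v \simeq \HH \perp \varphi_v'$ at each $v \in V_2$, one has $Q_v \simeq 2^i \HH \perp (q_0 \otimes \varphi_v')$, giving $i_W(Q_v) \geq 2^i = 2^{i+r-2}$, as required.

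The main obstacle is verifying that $Q = q_0 \otimes \varphi$ remains anisotropic over $L_2$. Since $\dim Q = 2^{i+2}$ equals $u(L_2)$, this is an extremal anisotropy claim that is not automatic from the anisotropy of $q_0$ and $\varphi$ separately, and it forces $\varphi$ to be chosen compatibly with $q_0$ (following the specific constructions in \cite{cas}). The verification would proceed by Springer's theorem at a judiciously chosen valuation in $V_2$, descending the anisotropy of $Q$ to residue forms over $L_1 = \ell(x_1)$ and ultimately to the anisotropy of $q_0$ over $\ell$, where the extremal hypothesis $u(\ell) = 2^i$ is used crucially.
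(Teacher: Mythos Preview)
Your overall architecture matches the paper's: reduce to building one $2^{i+r}$-dimensional anisotropic form with local Witt index $\geq 2^{i+r-2}$, then pass to subforms via Lemma~\ref{Witt index of sum}; reduce general $r$ to $r=2$ by replacing $\ell$ with $L_{r-2}$; and build the form as $q_0 \otimes \varphi$ for a specific four-dimensional $\varphi$. The anisotropy obstacle you flag is exactly Lemma~\ref{anisotropic tensor product}, and your proposed verification (Springer at the $x_1$-adic place, then at the $x_2$-adic place) is what the paper does.

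The genuine gap is in your local step. You assume $\varphi$ can be chosen to be a four-dimensional counterexample to $\lgp(1,1)$ with respect to $V_2$, i.e.\ isotropic over every $(L_2)_v$, and deduce $i_W(Q_v) \geq 2^i$ from $\varphi_v \simeq \HH \perp \varphi'_v$. But for $i \geq 1$ the cited sources do not supply such a $\varphi$: \cite[Theorem~2.3]{cas} produces counterexamples only in dimensions $\geq 2^{i+1}+1$, and the paper's own $\varphi = \langle x_2+1, -x_1-x_2, x_1, x_1x_2 \rangle$ is \emph{not} locally isotropic in general. At a valuation $v_\pi$ with $\pi \in \ell(x_1)[x_2]$ coprime to all entries, $\varphi$ reduces to a four-dimensional form over $\kappa_\pi$, a finite extension of $\ell(x_1)$ with $u(\kappa_\pi)$ as large as $2^{i+1}$; for suitable $\ell$ and $\pi$ this residue form is anisotropic, so $\varphi_{v_\pi}$ is anisotropic. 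The paper's Lemma~\ref{local isotropy} handles exactly these ``generic'' valuations differently: it never claims $\varphi$ is isotropic there, but instead observes that the full $2^{i+2}$-dimensional form $Q$ reduces to a form of dimension $2^{i+2} \geq u(\kappa_\pi) + 2 \cdot 2^i$, and invokes Lemma~\ref{Witt index of large dimensional forms} to get $i_W(\overline{Q}) \geq 2^i$ directly. So the local Witt index bound is not a formal consequence of $\varphi$ being a counterexample to $\lgp(1,1)$; it requires a separate dimension-vs-$u$-invariant argument at the generic places, and this is the piece your outline is missing.
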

Theorem \ref{ce to lgp} provides a partial generalization of \cite[Theorem~2.3]{cas}. Indeed, in the context of Theorem \ref{ce to lgp}, \cite[Theorem~2.3]{cas} states that there are counterexamples over $L_r$ to $\lgp(1,1)$ with respect to $V_r$ in dimensions $2^{i+r-1} + 1$ to $2^{i+r}$. Theorem \ref{ce to lgp} shows the existence of counterexamples in half as many dimensions as \cite[Theorem~2.3]{cas}, but these counterexamples all have local Witt index at least 1 (and in most cases have local Witt index strictly larger than 1).

The idea of the proof of Theorem \ref{ce to lgp} is as follows. We will first construct a $2^{i+r}$-dimensional quadratic form over $L_r$ that violates $\lgp\left(2^{i+r-2}, 1\right)$ with respect to $V_r$. Once we have constructed this form, if we take any $\left(2^{i+r}-n\right)$-dimensional subform, then Lemma \ref{Witt index of sum} implies that this subform violates $\lgp\left(2^{i+r-2} - n, 1 \right)$ with respect to~$V_r$.

\begin{lemma}
\label{anisotropic tensor product}
Let $k$ be any field of characteristic $\ne 2$, and let $q$ be any anisotropic quadratic form over $k$. Then over $k(x_1, x_2)$, the quadratic form
\[
	\varphi = \langle x_2 + 1, -x_1 - x_2, x_1, x_1x_2 \rangle \otimes q 
\]
is anisotropic.
\end{lemma}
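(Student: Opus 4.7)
The plan is to iterate Springer's theorem twice. First I would pass to the $x_1$-adic completion $K_1 := k(x_2)((x_1))$ of $L := k(x_1, x_2) = k(x_2)(x_1)$, since it suffices to prove $\varphi$ anisotropic over $K_1$. The four entries of $\langle x_2+1, -x_1-x_2, x_1, x_1 x_2\rangle$ have been arranged so that the first two are units in $\mathcal{O}_{v_{x_1}}$ (their reductions modulo $x_1$ are the nonzero elements $x_2+1$ and $-x_2$ of $k(x_2)$) while the last two have $v_{x_1}$-valuation $1$, with unit parts $1$ and $x_2$ after pulling out $x_1$. Hence
\[
    \varphi \simeq \bigl(\langle x_2+1, -x_1-x_2\rangle \otimes q\bigr) \perp x_1 \cdot \bigl(\langle 1, x_2\rangle \otimes q\bigr),
\]
and Springer's theorem over $K_1$ reduces the claim to showing that the two residue forms $\overline{\varphi_1} = \langle x_2+1, -x_2\rangle \otimes q$ and $\overline{\varphi_2} = \langle 1, x_2\rangle \otimes q$ are anisotropic over $k(x_2)$.

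\textbf{Second Springer.} To handle $\overline{\varphi_1}$ and $\overline{\varphi_2}$ I would pass to the $x_2$-adic completion $K_2 := k((x_2))$ and apply Springer again at $v_{x_2}$. For $\overline{\varphi_2} = q \perp x_2 \cdot q$, the Springer residues over $k$ are $q$ and $q$, both anisotropic by hypothesis. For $\overline{\varphi_1} = (x_2+1) q \perp (-x_2) q$, the element $x_2+1$ is a unit in $k[[x_2]]$ with residue $1$, so the Springer residues are $q$ and $-q$, again both anisotropic. Hence both $\overline{\varphi_1}$ and $\overline{\varphi_2}$ are anisotropic over $K_2$, and in particular over $k(x_2) \subset K_2$. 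Combining with the first Springer application, $\varphi$ is anisotropic over $K_1$, and therefore over $L$.

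\textbf{Main obstacle.} Each step is routine once the bookkeeping is set up; the true cleverness lies in the author's choice of entries. Specifically, the $+1$ in the first entry and the fact that $-x_1 - x_2 \equiv -x_2 \pmod{x_1}$ are what ensure the Springer residues at every stage are $q$ or $-q$ up to a unit scalar, rather than forms that could become hyperbolic. The only real thing to be careful about is correctly identifying which entries are units and which are uniformizers at each of the two valuations, and verifying that the residue characteristics ($\mathrm{char}\, k(x_2)$ and $\mathrm{char}\, k$) are both $\ne 2$, as is required to invoke Springer.
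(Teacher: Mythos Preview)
Your proof is correct and follows essentially the same approach as the paper: pass to $k(x_2)((x_1))$, apply Springer with respect to $x_1$, then handle each residue form over $k(x_2)$ by passing to $k((x_2))$ and applying Springer again. The only cosmetic difference is that the paper dispatches the second residue form $\langle 1, x_2\rangle \otimes q$ by citing an external lemma, whereas you carry out the Springer argument for it explicitly.
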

\begin{proof}
We prove the stronger claim that $\varphi$ is anisotropic over $k(x_2)((x_1))$. Over $k(x_2)((x_1))$, we have
\[
	\varphi = \left(\langle x_2 + 1, -x_1 - x_2 \rangle \otimes q\right) \perp x_1 \cdot \left(\langle 1, x_2 \rangle \otimes q \right).
\]
The second residue form of $\varphi$, $\langle 1, x_2 \rangle \otimes q$, is anisotropic over the residue field $k(x_2)$ by \cite[Lemma~2.4]{cas}. The first residue form of $\varphi$ is
\[
	\varphi_1 = \langle x_2 + 1, -x_2 \rangle \otimes q = (x_2 + 1) \cdot q \perp x_2 \cdot (-q).
\]
Considering the form $\varphi_1$ over $k((x_2))$, the first residue form of $\varphi_1$ is $q$ and the second residue form is $-q$, both of which are anisotropic over $k$ by assumption. Therefore $\varphi_1$ is anisotropic over $k((x_2)) \supset k(x_2)$ by Springer's Theorem. Thus both residue forms of $\varphi$ are anisotropic over $k(x_2)$, so by Springer's Theorem $\varphi$ is anisotropic over $k(x_2)((x_1)) \supset k(x_1, x_2)$, proving the claim.
\end{proof}

\begin{lemma}
\label{local isotropy}
Let $\ell$ be a field of characteristic $\ne 2$ such that $\ell \in \mathscr{A}_i(2)$ for some $i \geq 0$ and $u(\ell) = 2^i$. Let $q$ be a $2^i$-dimensional anisotropic quadratic form over $\ell$. Then for all non-trivial discrete valuations $v$ on $\ell(x_1, x_2)$ that are trivial on $\ell(x_1)$, the $2^{i+2}$-dimensional quadratic form defined over $\ell(x_1, x_2)$ by 
\[
	\varphi = \langle x_2 + 1, -x_1 - x_2, x_1, x_1x_2 \rangle \otimes q
\]
satisfies $i_W(\varphi_v) \geq 2^i$.
\end{lemma}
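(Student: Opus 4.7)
The plan is to classify the non-trivial discrete valuations $v$ on $\ell(x_1,x_2)$ trivial on $\ell(x_1)$, then to compute the Springer decomposition of $\varphi_v$ in each case and read off $i_W(\varphi_v)$. Such valuations are parametrized by the monic irreducible polynomials $\pi(x_2) \in \ell(x_1)[x_2]$, giving $v_\pi$ with residue field $\kappa_{v_\pi} = \ell(x_1)[x_2]/(\pi)$, together with the place at infinity $v_\infty$ (uniformizer $1/x_2$, residue field $\ell(x_1)$). In every case, $\kappa_v$ is a finitely generated extension of $\ell$ of transcendence degree~$1$, so by \cite[Theorems~2.3, 2.5]{leep} we have $\kappa_v \in \mathscr{A}_{i+1}(2)$, and in particular $u(\kappa_v) \leq 2^{i+1}$. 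Writing $\varphi_v \simeq q_1 \perp \pi_v \cdot q_2$ with unit entries, a standard consequence of Springer's Theorem gives $i_W(\varphi_v) = i_W(\overline{q_1}) + i_W(\overline{q_2})$, which reduces the goal to producing enough Witt index on the residue side.

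In the generic case $v = v_\pi$ with $\pi \notin \{x_2,\, x_2+1,\, x_1+x_2\}$, all four entries $x_2+1$, $-x_1-x_2$, $x_1$, $x_1 x_2$ of $\varphi$ lie in $\mathcal{O}_v^\times$, so $q_2 = 0$ and $\overline{\varphi_v}$ is a $2^{i+2}$-dimensional form over $\kappa_v$. Since
\[
    u(\kappa_v) + 2(2^i) - 1 \;\leq\; 2^{i+1} + 2^{i+1} - 1 \;=\; 2^{i+2} - 1 \;\leq\; \dim \overline{\varphi_v},
\]
Lemma~\ref{Witt index of large dimensional forms} immediately yields $i_W(\overline{\varphi_v}) \geq 2^i$.

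The remaining four valuations $v \in \{v_{x_2},\,v_{x_2+1},\,v_{x_1+x_2},\,v_\infty\}$ I will handle one at a time by computing the Springer decomposition and exhibiting $\HH \otimes q$, which is hyperbolic of dimension $2^{i+1}$ with Witt index $2^i$, as a subform of one of the residue forms. At $v_{x_2}$ and $v_{x_2+1}$, the unit-entry residue forms are $\langle 1, -x_1, x_1 \rangle \otimes q$ and $\langle 1-x_1, x_1, -x_1 \rangle \otimes q$ respectively, each of which contains $\langle -x_1, x_1 \rangle \otimes q \simeq \HH \otimes q$. At $v_{x_1+x_2}$, the unit-entry residue is $\langle 1-x_1, x_1, -x_1^2 \rangle \otimes q \simeq \langle 1-x_1, x_1, -1 \rangle \otimes q$ (using $\langle -x_1^2 \rangle \simeq \langle -1 \rangle$ since $x_1^2$ is a square); the ternary form $\langle 1-x_1, x_1, -1 \rangle$ is isotropic over $\ell(x_1)$ via the vector $(1,1,1)$, so it splits as $\HH \perp \langle d \rangle$ for some $d \in \ell(x_1)^\times$, and tensoring with $q$ produces $\HH \otimes q$ as a subform. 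At $v_\infty$, rescaling the three entries of negative valuation by $\pi^2 = 1/x_2^2$ puts $\varphi_{v_\infty}$ into Springer form $\langle x_1 \rangle \otimes q \,\perp\, \pi \cdot \langle 1+\pi,\, -(1+x_1\pi),\, x_1 \rangle \otimes q$, whose uniformizer-part residue $\langle 1, -1, x_1 \rangle \otimes q$ contains $\HH \otimes q$.

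The most delicate step is the case $v = v_{x_1+x_2}$, which requires both the square-class simplification $-x_1^2 \equiv -1$ in $\ell(x_1)^\times / \ell(x_1)^{\times 2}$ and the explicit detection of an isotropic vector of the resulting $3$-dimensional residue form. The infinite place is also slightly delicate because one must first rescale the negative-valuation entries by squares of $\pi$ before extracting a genuine Springer decomposition.
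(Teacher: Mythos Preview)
Your proof is correct and follows essentially the same strategy as the paper: classify the valuations on $\ell(x_1)(x_2)$, handle the generic $v_\pi$ with $\pi \nmid$ (entries) by the $u$-invariant bound on $\kappa_v$ together with Lemma~\ref{Witt index of large dimensional forms}, and treat the four special places by exhibiting a hyperbolic subform isometric to $\HH \otimes q$. The only cosmetic difference is that you compute the full Springer decomposition and invoke the identity $i_W(\varphi_v) = i_W(\overline{q_1}) + i_W(\overline{q_2})$, whereas the paper simply finds an isotropic two- or three-dimensional subform of $\langle x_2+1,-x_1-x_2,x_1,x_1x_2\rangle$ over the completion and cites \cite[Corollary~I.6.1]{lam} to push the isotropy through the tensor product with $q$; the underlying content is identical.
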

\begin{proof}
Before we prove the lemma, we recall \cite[Theorem~2.1.4(b)]{valued fields}. Let $K = k(t)$ for any field $k$, let $\mathscr{P}$ be the set of all monic irreducible polynomials in $k[t]$, and let $V_{K/k}$ be the set of all non-trivial discrete valuations on $K$ that are trivial on $k$. Then $V_{K/k} = \{v_{\pi} \mid \pi \in \mathscr{P}\} \cup \{v_{\infty}\}$, where $v_{\infty}$ is the degree valuation with respect to $t$. We now prove the lemma by considering several cases for $v$. 

\underline{Case 1}: $v = v_{\infty}$ is the degree valuation with uniformizer $x_2^{-1}$.

We can write $\langle x_2 + 1, -x_1 - x_2 \rangle = x_2 \cdot \left\langle 1 + x_2^{-1}, -x_1x_2^{-1} - 1 \right\rangle$. Scaling this form by $x_2^{-2}$, we have
\[
	\langle x_2 + 1, -x_1 - x_2 \rangle \simeq x_2^{-1} \cdot \left\langle 1 + x_2^{-1}, -x_1x_2^{-1} - 1 \right\rangle.
\]
The second residue form of this quadratic form is $\langle 1, -1 \rangle$, which is isotropic over the residue field~$\ell(x_1)$. So $\langle x_2 + 1, -x_1-x_2 \rangle$ is isotropic over $\ell(x_1, x_2)_{v_{\infty}}$ by Springer's Theorem. Therefore, since $\dim q = 2^i$, \cite[Corollary~I.6.1]{lam} implies that
\[
	i_W((\langle x_2 + 1, -x_1 - x_2 \rangle \otimes q)_{v_{\infty}}) \geq 2^i.
\]
Thus $i_W(\varphi_{v_{\infty}}) \geq 2^i$ since $\varphi$ contains $\langle x_2 + 1, -x_1 - x_2 \rangle \otimes q$ as a subform.

\underline{Case 2}: $v = v_{\pi}$, where $\pi = x_2, x_2 + 1, x_1 + x_2$ is a divisor of at least one entry of $\varphi$.

In this case, the quadratic forms $\langle -x_1 - x_2, x_1 \rangle$, $\langle x_1, x_1x_2 \rangle$, and $\langle x_2 + 1, x_1, x_1x_2 \rangle$ each reduce to isotropic quadratic forms over the respective residue field $\kappa_{\pi}$. By the same argument as in Case 1, since $\dim q = 2^i$ we have $i_W(\varphi_{v_{\pi}}) \geq 2^i$.

\underline{Case 3}: $v = v_{\pi}$ for a monic irreducible polynomial $\pi \in \ell(x_1)[x_2]$ different from $x_2, x_2 + 1, x_1 + x_2$.

In this case, each entry of $\varphi$ is a unit in $\mathcal{O}_{v_{\pi}}$, so $\varphi$ reduces to a $2^{i+2}$-dimensional quadratic form $\overline{\varphi}$ over the residue field $\kappa_{\pi}$. The field $\kappa_{\pi}$ is a finite extension of $\ell(x_1) \in \mathscr{A}_{i+1}(2)$, therefore $u(\kappa_{\pi}) \leq 2^{i+1}$ (see Section \ref{notation}). The form $\overline{\varphi}$ has dimension $2^{i+2} = 2^{i+1} + 2 \cdot 2^i \geq u(\kappa_{\pi}) + 2 \cdot 2^i$, so $i_W\left(\overline{\varphi}\right) \geq 2^i$ by Lemma \ref{Witt index of large dimensional forms}. This implies, by Springer's Theorem, that $i_W(\varphi_{v_{\pi}}) \geq 2^i$.

The first paragraph of the proof shows that these three cases above cover all possibilities for $v$, so the proof is complete.
\end{proof}

We now prove Theorem \ref{ce to lgp}.

\begin{proof}[Proof of Theorem \ref{ce to lgp}]
We first consider the case $r = 2$. Let $q$ be an anisotropic $2^i$-dimensional quadratic form over $\ell$ and consider the $2^{i+2}$-dimensional quadratic form
\[
	\varphi_2 = \langle x_2 + 1, -x_1 - x_2, x_1, x_1x_2 \rangle \otimes q 
\]
over $L_2 = \ell(x_1, x_2)$. By Lemma \ref{anisotropic tensor product}, $\varphi_2$ is anisotropic over $L_2$, and by Lemma \ref{local isotropy}, $i_W(\varphi_{2,v}) \geq 2^i$ for all $v \in V_2$. Therefore $\varphi_2$ violates $\lgp\left(2^i, 1\right)$ with respect to $V_2$. For any $n$ such that $0 < n < 2^i$, let $\psi_n$ be any $\left(2^{i+2} - n \right)$-dimensional subform of $\varphi_2$. Then $\psi_n$ is anisotropic over $L_2$, and for all $v \in V_2$, Lemma \ref{Witt index of sum} implies that $i_W(\psi_{n,v}) \geq 2^i - n$. Hence $\psi_n$ violates $\lgp\left(2^i - n, 1 \right)$ with respect to $V_2$, completing the proof if $r = 2$.

Now suppose $r \geq 3$. We have $L_r = \ell(x_1, \ldots, x_r) \cong L_{r-2}(x_{r-1}, x_r)$, and since $\ell \in$~$\mathscr{A}_i(2)$, it follows that $L_{r-2} \in \mathscr{A}_{i+r-2}(2)$. Moreover, if $q$ is an anisotropic $2^i$-dimensional quadratic form over $\ell$, then the $2^{i+r-2}$-dimensional quadratic form $q \otimes \langle \langle x_1, \ldots, x_{r-2} \rangle \rangle$ is anisotropic over $L_{r-2}$ \cite[Lemma~2.4]{cas}. So $u(L_{r-2}) = 2^{i+r-2}$, and by applying Lemmas \ref{anisotropic tensor product} and \ref{local isotropy} to the $2^{i+r}$-dimensional quadratic form
\[
	\varphi_r = \langle x_r + 1, -x_{r-1} - x_r, x_{r-1}, x_{r-1}x_r \rangle \otimes \left(q \otimes \langle \langle x_1, \ldots, x_{r-2} \rangle \rangle\right)
\]
over $L_r$, we conclude that $\varphi_r$ violates $\lgp\left(2^{i+r-2}, 1 \right)$ with respect to $V_r$. For any $n$ such that $0 < n < 2^{i+r-2}$, all $\left(2^{i+r} - n \right)$-dimensional subforms of $\varphi_r$ violate $\lgp\left(2^{i+r-2} - n, 1 \right)$ with respect to $V_r$, completing the proof for $r \geq 3$.
\end{proof}
\begin{examples}
The following are special cases of Theorem \ref{ce to lgp}.
\begin{enumerate}[label=(\arabic*)]
    \item For any prime $p \ne 2$, the field $\F_p \in \mathscr{A}_1(2)$ and $u(\F_p) = 2$. Thus over $\F_p(x_1, x_2)$ there is an eight-dimensional quadratic form that violates $\lgp(2,1)$ with respect to $V_2$, and a seven-dimensional quadratic form that violates $\lgp(1,1)$ with respect to $V_2$.

    \item For any prime $p \ne 2$, $\Q_p \in \mathscr{A}_2(2)$ \cite[Corollary~2.7]{leep} and $u(\Q_p) = 4$. Thus over $\Q_p(x_1, x_2)$ there are quadratic forms that violate $\lgp(r, 1)$ with respect to $V_2$ for $r = 1, 2, 3, 4$.
\end{enumerate}
\end{examples}

\subsection{$I^n$-neighbors}
\label{I^n-neighbors}
As we saw in Subsection \ref{counterexamples}, over purely transcendental field extensions $L$ of fields $\ell \in \mathscr{A}_i(2)$ for some $i \geq 0$, there is a set $W$ of discrete valuations on $L$ with respect to which there are numerous counterexamples to $\lgp(r, 1)$ despite the fact that the local-global principle for isometry is satisfied over $L$ with respect to $W$. We are then naturally led to ask if there are certain conditions we can impose on a quadratic form $q$ over a field $k$ equipped with a set $V$ of discrete valuations to ensure that $q$ satisfies $\lgp(r, s)$ with respect to $V$ for some integers $r, s \geq 1$. In this subsection, we explore such a condition on quadratic forms over a field $k$ equipped with a set~$V$ of discrete valuations with respect to which the local-global principle for isometry holds (see Proposition \ref{I^n neighbors and lgp}).

We first recall that if $k$ is a field equipped with a non-empty set $V$ of non-trivial discrete valuations with respect to which the local-global principle for isometry holds, then Pfister neighbors over~$k$ satisfy the local-global principle for isotropy with respect to $V$ (see, e.g., \cite[Remark~2.2a)]{cas}). Recall that a quadratic form $q$ over $k$ is a \textit{Pfister neighbor} if it is similar to a subform of a Pfister form $\varphi$ over $k$ with $\dim \varphi < 2 \dim q$ (see, e.g., \cite[Definition~X.4.16]{lam}). Now, $n$-fold Pfister forms over $k$ additively generate $I^n(k)$, the $n$-th power of the fundamental ideal of the Witt ring~$W(k)$. So a Pfister neighbor $q$ is a subform of some $2^n$-dimensional form $\varphi \in I^n(k)$. Here we recall that by $\varphi \in I^n(k)$ we mean that the Witt equivalence class $[\varphi]$ of $\varphi$ belongs to $I^n(k)$. This motivates the following generalized definition.
\begin{defin}
\label{I^n neighbors}
Let $n \geq 1$ be a positive integer. A quadratic form $q$ over a field $k$ is an \textit{$I^n$-neighbor of complementary dimension $r$} if there exists an $r$-dimensional quadratic form $\sigma_r$ over $k$, with $0 \leq r < \dim q$, such that $q \perp \sigma_r \in I^n(k)$. The form $\sigma_r$ is called a \textit{complementary form} of~$q$.
\end{defin}
\begin{remark}
The complementary dimension of an $I^n$-neighbor $q$ over a field $k$ is not always unique. Indeed, if $q \perp \sigma_r \in I^n(k)$ and $r < \dim q - 2$, then $q \perp \sigma_r \perp \HH \in I^n(k)$. However, the parity of the complementary dimension of the $I^n$-neighbor is unique. Indeed, $\dim q$ and any complementary dimension $r$ must have the same parity since any form in $I^n(k)$ for $n \geq 1$ must have even dimension.
\end{remark}

\begin{examples} Let $k$ be any field of characteristic $\ne 2$.
\begin{enumerate}[label=(\arabic*)]

    \item If $q$ is a quadratic form over $k$ such that $q \in I^n(k)$ for some $n \geq 1$, then $q$ is an $I^n$-neighbor of complementary dimension 0.
    
	\item All quadratic forms $q$ over $k$ of dimension $\geq 2$ are $I^1$-neighbors, as a quadratic form belongs to $I^1(k)$ if and only if it has even dimension. So either $q$ or $q \perp \langle 1 \rangle$ belongs to $I^1(k)$.
	
	\item All quadratic forms $q$ over $k$ of dimension $\geq 3$ are $I^2$-neighbors. Indeed, a quadratic form  belongs to $I^2(k)$ if and only if it has even dimension and trivial signed determinant. So either $q \perp \langle \pm \det q \rangle$ or $q \perp \langle 1, \pm \det q \rangle$ belongs to $I^2(k)$, with the sign of $\det q$ chosen based on $\dim q$.
	
	\item If $q$ is a Pfister neighbor over $k$ whose associated Pfister form $\varphi$ has dimension $2^n$, then $q$ is an $I^n$-neighbor.

    \item If $q$ is a twisted $(n,m)$-Pfister form for some $1 \leq m < n$ (see \cite[Definition~3.4(i)]{hoffmann} for the definition), then $q$ is an $I^n$-neighbor of complementary dimension $2^m$.
\end{enumerate}
\end{examples}

We now prove some preliminary results about $I^n$-neighbors. In our study of $I^n$-neighbors we will frequently use a result of Arason and Pfister that states that if a quadratic form $q$ over a field~$k$ of characteristic $\ne 2$ belongs to $I^n(k)$ for some $n \geq 1$ and $\dim q < 2^n$, then $q$ is hyperbolic (see, e.g, \cite[Hauptsatz~X.5.1]{lam}).

\begin{lemma}
\label{I^n neighbors of small dimension}
If a quadratic form $q$ over a field $k$ of characteristic $\ne 2$ is an $I^n$-neighbor of complementary dimension $r$ and $2^n > \dim q + r$, then $i_W(q) \geq \frac{\dim q - r}{2}$. In particular, $q$ is isotropic.
\end{lemma}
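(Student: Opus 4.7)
The plan is to combine two tools from the preliminary section: the Arason--Pfister Hauptsatz (recalled in the paragraph preceding the statement) and Lemma~\ref{subform of hyperbolic form}. By definition of an $I^n$-neighbor, there is an $r$-dimensional form $\sigma_r$ over $k$ with $q \perp \sigma_r \in I^n(k)$. The dimension of $q \perp \sigma_r$ equals $\dim q + r$, which is $<2^n$ by hypothesis, so the Hauptsatz forces $q \perp \sigma_r$ to be hyperbolic. Setting $N = (\dim q + r)/2$, this means $q$ sits as a subform of $N\HH$.

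Next, I rewrite $\dim q = 2N - r = N + (N-r)$. Provided $r \geq 1$, the inequality $r < \dim q$ (built into the definition of an $I^n$-neighbor) gives $r < 2N - r$, i.e.\ $N - r > 0$, so Lemma~\ref{subform of hyperbolic form} applies with its ``$r$'' equal to $N - r$, yielding
\[
    i_W(q) \geq N - r = \frac{\dim q - r}{2}.
\]
The edge case $r = 0$ must be treated separately, but it is even easier: then $q \in I^n(k)$ with $\dim q < 2^n$, so by the Hauptsatz $q$ is itself hyperbolic and $i_W(q) = \dim q/2 = (\dim q - r)/2$.

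For the final assertion, observe that $\dim q$ and $r$ have the same parity (since any element of $I^n(k)$ has even dimension), so $\dim q - r$ is even. Combined with the strict inequality $r < \dim q$, this gives $\dim q - r \geq 2$, hence $(\dim q - r)/2 \geq 1$ and $q$ is isotropic.

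There is no real obstacle here; the argument is essentially a direct concatenation of the Hauptsatz with Lemma~\ref{subform of hyperbolic form}. The only subtlety is to record separately the degenerate case $r = 0$, where Lemma~\ref{subform of hyperbolic form} does not formally apply, and to verify that $\dim q - r$ is even and positive in order to extract the isotropy conclusion.
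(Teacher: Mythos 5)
Your argument is correct and matches the paper's proof essentially step for step: apply the Arason--Pfister Hauptsatz to conclude $q\perp\sigma_r$ is hyperbolic, then use Lemma~\ref{subform of hyperbolic form} with $n=\frac{\dim q+r}{2}$ to obtain $i_W(q)\geq\frac{\dim q-r}{2}$. The only minor inefficiency is your separate treatment of $r=0$: since $0<\frac{\dim q-r}{2}\leq\frac{\dim q+r}{2}$ already holds when $r=0$ (because $r<\dim q$ forces $\dim q>0$), Lemma~\ref{subform of hyperbolic form} applies directly there as well, just as the paper does without splitting cases.
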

\begin{proof}
By assumption, there exists an $r$-dimensional form $\sigma_r$ over $k$ such that $q \perp \sigma_r \in I^n(k)$. Since $2^n > \dim q + r$ by assumption, \cite[Hauptsatz~X.5.1]{lam} implies that $q \perp \sigma_r$ is hyperbolic, i.e., $q \perp \sigma_r \simeq \left(\frac{\dim q + r}{2}\right) \HH$. Then since $\dim q = \frac{\dim q + r}{2} + \frac{\dim q - r}{2}$ and $\dim q - r > 0$, we conclude by Lemma \ref{subform of hyperbolic form} that $i_W(q) \geq \frac{\dim q - r}{2} \geq 1$, as desired.
\end{proof}

\begin{prop}
\label{isometric complementary forms}
Suppose that $q$ is an $I^n$-neighbor of complementary dimension $r$ over a field $k$ of characteristic $\ne 2$. If $r < 2^{n-1}$, then all $r$-dimensional complementary forms of $q$ are isometric.
\end{prop}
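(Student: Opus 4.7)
The plan is to use the Arason--Pfister Hauptsatz applied to the Witt-difference of two putative complementary forms.

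Suppose $\sigma_r$ and $\sigma_r'$ are two $r$-dimensional complementary forms of $q$, so that both $q \perp \sigma_r$ and $q \perp \sigma_r'$ lie in $I^n(k)$. Since $I^n(k)$ is an ideal (in particular, an additive subgroup) of the Witt ring $W(k)$, the class
\[
   [q \perp \sigma_r] - [q \perp \sigma_r'] = [\sigma_r \perp (-\sigma_r')]
\]
belongs to $I^n(k)$. The form $\sigma_r \perp (-\sigma_r')$ has dimension $2r$, and the hypothesis $r < 2^{n-1}$ gives $2r < 2^n$. By the Arason--Pfister Hauptsatz \cite[Hauptsatz~X.5.1]{lam}, any form in $I^n(k)$ of dimension strictly less than $2^n$ is hyperbolic, so $\sigma_r \perp (-\sigma_r')$ is hyperbolic. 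Since $\sigma_r$ and $\sigma_r'$ have the same dimension, this is exactly the criterion for $\sigma_r \simeq \sigma_r'$ recalled in Section~\ref{notation}.

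There is essentially no obstacle: the argument is a direct one-line application of the Hauptsatz, and the dimension bound $r < 2^{n-1}$ is precisely what is needed to place $\sigma_r \perp (-\sigma_r')$ below the Arason--Pfister threshold. The only thing one needs to be careful about is that $\sigma_r \perp (-\sigma_r')$ is only \emph{Witt equivalent} to $(q \perp \sigma_r) \perp -(q \perp \sigma_r')$, not isometric to it, but this is harmless because membership in $I^n(k)$ is by definition a property of Witt classes.
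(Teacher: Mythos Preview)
Your proof is correct and follows essentially the same argument as the paper: take the Witt-difference of $q\perp\sigma_r$ and $q\perp\sigma_r'$ to obtain $\sigma_r\perp(-\sigma_r')\in I^n(k)$, then apply the Arason--Pfister Hauptsatz using $2r<2^n$ to conclude hyperbolicity and hence isometry. Your closing remark about Witt equivalence versus isometry is a nice touch that makes explicit what the paper leaves implicit.
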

\begin{proof}
Let $\sigma_r, \sigma_r'$ be $r$-dimensional forms over $k$ such that $q \perp \sigma_r, q \perp \sigma_r' \in I^n(k)$. Then
\[
	\varphi = (q \perp \sigma_r) \perp -(q \perp \sigma_r') \in I^n(k).
\]
The form $\varphi$ is Witt equivalent to $\sigma_r \perp -\sigma_r'$, thus $\sigma_r \perp -\sigma_r' \in I^n(k)$. By assumption, $r < 2^{n-1}$, so $\dim (\sigma_r \perp -\sigma_r') < 2^n$. By \cite[Hauptsatz~X.5.1]{lam}, this implies that $\sigma_r \perp -\sigma_r \simeq r\HH$, so $\sigma_r$ and~$\sigma_r'$ must be isometric.
\end{proof}
\begin{remarks}
Let $k$ be any field of characteristic $\ne 2$.
\begin{enumerate}[label=(\arabic*)]
	\item If $q$ is an $I^n$-neighbor of complementary dimension $r$ over $k$ and $2^n > \dim q + r$, then Proposition \ref{isometric complementary forms} shows that the form $\sigma_r$ over $k$ such that $q \perp \sigma_r \in I^n(k)$ is unique up to isometry. Indeed, since $\dim q > r$ we have $2^n > \dim q + r > 2r$. Hence $2^{n-1} > r$, so Proposition \ref{isometric complementary forms} applies.
	
	\item If $q$ is an $I^n$-neighbor of complementary dimension $r = 2^{n-1}$ over $k$, then any two $r$-dimensional complementary forms of $q$ that represent a common element of $k$ must be isometric. Indeed, suppose $\sigma_r$ and $\sigma_r'$ are $r$-dimensional complementary forms of $q$ that represent a common element of $k$. Then, as we saw in the proof Proposition \ref{isometric complementary forms}, $\sigma_r \perp -\sigma_r' \in I^n(k)$. Moreover, since $\sigma_r$ and $\sigma_r'$ represent a common element of $k$, the form $\sigma_r \perp -\sigma_r'$ is isotropic \cite[Corollary~I.3.6]{lam}. So $\sigma_r \perp -\sigma_r'$ is Witt equivalent to a form in $I^n(k)$ with dimension~$< 2^n$, and therefore must be hyperbolic \cite[Hauptsatz~X.5.1]{lam}. Thus $\sigma_r \simeq \sigma_r'$.
\end{enumerate}
\end{remarks}
The following example shows that complementary forms of an $I^n$-neighbor need not be isometric if the conditions of Proposition \ref{isometric complementary forms} are not met.
\begin{example}
Let $k$ be any field of characteristic $\ne 2$, and let $K = k(x, y)$. Let $q$ be any four-dimensional quadratic form over $K$ with determinant $-x$. Then $q \perp \langle 1, x \rangle$ and $q \perp \langle -y, -xy \rangle$ both belong to $I^2(K)$. However, $\langle 1, x \rangle \not\simeq \langle -y, -xy \rangle$ since the Pfister form $\langle \langle x, y \rangle \rangle$ is anisotropic over $K$. 
\end{example}
We will now show that, given an $I^n$-neighbor $q$ over a field $k$ of characteristic $\ne 2$ equipped with a non-empty set $V$ of non-trivial discrete valuations with respect to which the local-global principle for isometry holds, we can find integers $r, s \geq 1$ such that $q$ satisfies $\lgp(r, s)$ with respect to~$V$ (see Proposition \ref{I^n neighbors and lgp}). Before proving this result, we collect some results about the behavior of quadratic forms in $I^n(k)$.

Recall that, for a field $k$ of characteristic $\ne 2$ and a positive integer $n \geq 1$, GP$_n(k)$ is the set of quadratic forms $q$ over $k$ such that $q \simeq a \cdot \varphi$ for some $a \in k^{\times}$ and some $n$-fold Pfister form $\varphi$ over~$k$ (see, e.g., \cite{hoffmann}). Then, by \cite[Theorem~X.5.6]{lam},
\[
	\text{GP}_n(k) = \left\{\text{quadratic forms $q$ over $k$} \mid \dim q = 2^n \text{ and } q \in I^n(k) \right\}.
\]
\begin{lemma}
\label{Witt equivalent to GP}
Let $q$ be an even-dimensional quadratic form over a field $k$ of characteristic $\ne 2$ with $\dim q \geq 2^n$ for some integer $n \geq 1$. If $q$ is Witt equivalent to an isotropic form $\psi \in \emph{GP}_n(k)$, then $q$ is hyperbolic.
\end{lemma}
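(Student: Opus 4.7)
The plan is to first upgrade the isotropy of $\psi$ to outright hyperbolicity, and then transfer this to $q$ via the Witt equivalence.

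First I would handle $\psi$. Since $\psi \in \text{GP}_n(k)$, by definition there exist $a \in k^{\times}$ and an $n$-fold Pfister form $\pi$ over $k$ with $\psi \simeq a \cdot \pi$. Isotropy is preserved under scaling by a nonzero element, so $\pi$ is isotropic. Invoking the classical fact that an isotropic Pfister form is hyperbolic (see, e.g., \cite[Theorem~X.1.7]{lam}), we get $\pi \simeq 2^{n-1}\HH$. Since $a \cdot \HH \simeq \langle a, -a \rangle \simeq \HH$, scaling preserves hyperbolicity, so $\psi \simeq a \cdot \pi \simeq 2^{n-1}\HH$ is hyperbolic. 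This is the key step, but it is entirely standard.

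Next I would transfer this to $q$. Being Witt equivalent to $\psi$ means $[q] = [\psi] = 0$ in $W(k)$. Writing the Witt decomposition $q \simeq i_W(q)\HH \perp q_{an}$, we get $[q_{an}] = 0$ in $W(k)$, which forces $\dim q_{an} = 0$ (an anisotropic form is trivial in $W(k)$ only if it is the zero form). Since $q$ is even-dimensional, we conclude $q \simeq \frac{\dim q}{2}\HH$ is hyperbolic.

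There is no real obstacle here; the dimension hypothesis $\dim q \geq 2^n$ is included for bookkeeping (so that $q$ accommodates at least as much hyperbolic space as $\psi$), but it does not play a substantive role in the argument beyond ensuring the setup is non-degenerate. The lemma is essentially a direct consequence of the two well-known facts that isotropic Pfister forms are hyperbolic and that the class $0 \in W(k)$ is represented only by hyperbolic forms.
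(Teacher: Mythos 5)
Your proof is correct and follows essentially the same route as the paper's: both arguments hinge on writing $\psi \simeq a\cdot\varphi$ for a Pfister form $\varphi$, observing that an isotropic Pfister form is hyperbolic, and then transferring hyperbolicity from $\psi$ to $q$. The only cosmetic difference is in the transfer step: the paper first uses $\dim q \geq 2^n = \dim\psi$ to write $q \simeq \psi \perp \frac{\dim q - \dim\psi}{2}\HH$ and then concludes, whereas you instead pass to the Witt ring and note $[q]=[\psi]=0$ forces $q_{an}=0$. Your version is slightly cleaner and correctly shows that the hypothesis $\dim q \geq 2^n$ is not actually needed for the conclusion (it is automatic once one knows $\psi$ is hyperbolic and $q$ is an honest form Witt equivalent to it).
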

\begin{proof}
Since $\psi \in \text{GP}_n(k)$, we have $\dim \psi = 2^n \leq \dim q$. Therefore, because $q$ is Witt equivalent to~$\psi$, it follows that $q \simeq \psi \perp \frac{\dim q - \dim \psi}{2}\HH$. By definition, because $\psi \in \text{GP}_n(k)$, there must be some $a \in k^{\times}$ and some $n$-fold Pfister form $\varphi$ over~$k$ such that $\psi \simeq a \cdot \varphi$. Furthermore, by assumption,~$\psi$ is isotropic, thus $\varphi$ is isotropic as well. Since~$\varphi$ is an isotropic Pfister form, $\varphi$ must be hyperbolic \cite[Theorem~X.1.7]{lam}. Therefore~$\psi$ is hyperbolic, which implies that $q$ is hyperbolic as well.
\end{proof}
We can now prove
\begin{prop}
\label{I^n neighbors and lgp}
Let $k$ be a field of characteristic $\ne 2$ equipped with a non-empty set $V$ of non-trivial discrete valuations with respect to which the local-global principle for isometry holds. Let $q$ be an $I^n$-neighbor of complementary dimension $r$ over $k$ for some $n \geq 1$. Then
\[
	q \text{ satisfies } \emph{LGP}\left(\frac{\dim q + r - 2^n}{2} + 1, \frac{\dim q - r}{2}\right) \text{ with respect to $V$}.
\]
\end{prop}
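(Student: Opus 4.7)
The plan is to pass from $q$ to the form $\varphi := q \perp \sigma_r$, where $\sigma_r$ is an $r$-dimensional complementary form so that $\varphi \in I^n(k)$ and $\dim \varphi = \dim q + r$. The strategy is to use the local hypothesis on $i_W(q_v)$ together with the Arason--Pfister Hauptsatz to force $\varphi_v$ to be hyperbolic for every $v \in V$, then apply the local-global principle for isometry to conclude that $\varphi$ is hyperbolic over $k$, and finally invoke Lemma \ref{subform of hyperbolic form} to read off the desired lower bound on $i_W(q)$.

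In more detail, suppose $i_W(q_v) \geq \frac{\dim q + r - 2^n}{2} + 1$ for every $v \in V$. Since $q$ is a subform of $\varphi$, every hyperbolic subform of $q_v$ is also a hyperbolic subform of $\varphi_v$, giving $i_W(\varphi_v) \geq i_W(q_v)$. Hence the anisotropic part $(\varphi_v)_{an}$ has dimension at most
\[
    (\dim q + r) - 2\!\left(\frac{\dim q + r - 2^n}{2} + 1\right) = 2^n - 2 < 2^n.
\]
Because $\varphi \in I^n(k)$ we have $\varphi_v \in I^n(k_v)$, and since $(\varphi_v)_{an}$ is Witt-equivalent to $\varphi_v$ it also lies in $I^n(k_v)$. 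The Arason--Pfister Hauptsatz \cite[Hauptsatz~X.5.1]{lam} then forces $(\varphi_v)_{an}$ to be hyperbolic, hence zero-dimensional, so $\varphi_v$ itself is hyperbolic for every $v \in V$.

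Invoking the local-global principle for isometry with respect to $V$, which (as recorded in Section \ref{notation}) is equivalent to the assertion that hyperbolicity of even-dimensional forms is a local-global property, we conclude that $\varphi$ is hyperbolic over $k$, i.e., $\varphi \simeq \frac{\dim q + r}{2}\HH$. Since $q$ is a subform of $\varphi$ of dimension $\frac{\dim q + r}{2} + \frac{\dim q - r}{2}$, Lemma \ref{subform of hyperbolic form} yields $i_W(q) \geq \frac{\dim q - r}{2}$, which is exactly the conclusion $\lgp\bigl(\frac{\dim q + r - 2^n}{2} + 1, \frac{\dim q - r}{2}\bigr)$ for $q$. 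The main subtlety I anticipate is the boundary range $\dim q + r < 2^n$, where the prescribed bound on $i_W(q_v)$ is nonpositive and so the local hypothesis carries no content; however, in that range $\varphi$ itself has dimension $< 2^n$ and lies in $I^n(k)$, so Arason--Pfister already yields hyperbolicity of $\varphi$ over $k$ directly, and the rest of the argument proceeds unchanged.
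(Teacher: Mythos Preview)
Your proof is correct and follows essentially the same route as the paper: pass to $\varphi = q \perp \sigma_r \in I^n(k)$, show $\varphi_v$ is hyperbolic for each $v$, invoke the local-global principle for isometry, and then apply Lemma \ref{subform of hyperbolic form}. Your direct application of the Arason--Pfister Hauptsatz to the anisotropic part $(\varphi_v)_{an}$ is slightly more streamlined than the paper's detour through $\text{GP}_n(k_v)$ and Lemma \ref{Witt equivalent to GP}, but the substance is identical.
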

\begin{proof}
Because $q$ is an $I^n$-neighbor of complementary dimension $r$, there exists an $r$-dimensional form $\sigma_r$ over $k$ such that $q \perp \sigma_r \in I^n(k)$. We note that, to show $i_W(q) \geq \frac{\dim q - r}{2}$, it suffices to show that $q \perp \sigma_r$ is hyperbolic. Indeed, suppose $q \perp \sigma_r \simeq \frac{\dim q + r}{2} \HH$.
Since $\dim q = \frac{\dim q + r}{2} + \frac{\dim q - r}{2}$ and $\dim q - r > 0$, by Lemma \ref{subform of hyperbolic form} we have $i_W(q) \geq \frac{\dim q - r}{2}$, as desired. So we will show that $i_W(q_v) \geq \frac{\dim q + r - 2^n}{2} + 1$ for all $v \in V$ implies that $q \perp \sigma_r$ is hyperbolic over $k$.

We first observe that if $\dim q + r < 2^n$, then by Lemma \ref{I^n neighbors of small dimension}, $i_W(q) \geq \frac{\dim q - r}{2}$ without any assumptions on the Witt index of $q$ over $k_v$.

So assume that $\dim q + r \geq 2^n$. By our assumption on the Witt index of $q$ over $k_v$, we have 
\[
	i_W((q \perp \sigma_r)_v) \geq \frac{\dim q + r - 2^n}{2} + 1
\]
for all $v \in V$. Thus for each $v \in V$ we can find a $2^n$-dimensional quadratic form $q'_v$ over $k_v$ such that 
\[
	(q \perp \sigma_r)_v \simeq \frac{\dim q + r - 2^n}{2} \HH \perp q'_v,
\]
where $i_W(q'_v) \geq 1$. Now, $q \perp \sigma_r \in I^n(k)$, so $(q \perp \sigma_r)_v \in I^n(k_v)$. Therefore $q'_v \in I^n(k_v)$, hence $q'_v \in \text{GP}_n(k_v)$ by \cite[Theorem~X.5.6]{lam}. So $(q \perp \sigma_r)_v$ is Witt equivalent to the isotropic form $q'_v \in \text{GP}_n(k_v)$, thus $(q \perp \sigma_r)_v$ is hyperbolic by Lemma \ref{Witt equivalent to GP}. This holds for all $v \in V$, so $q \perp \sigma_r$ is hyperbolic over~$k$ since the local-global principle for isometry holds with respect to $V$. We have shown that $q \perp \sigma_r$ is hyperbolic over $k$, so the proof is complete.
\end{proof}

\begin{remark}
\label{Witt index increases}
In the context of Proposition \ref{I^n neighbors and lgp}, if $q$ is an $I^n$-neighbor of complementary dimension $r < 2^{n-1} - 1$, then
\[
	\frac{\dim q + r - 2^n}{2} + 1 < \frac{\dim q - r}{2}.
\]
Therefore, somewhat counterintuitively, Proposition \ref{I^n neighbors and lgp} shows that for such an $I^n$-neighbor, the Witt index \textit{increases} as we pass from the local setting to the global setting.
\end{remark}

Proposition \ref{I^n neighbors and lgp} shows that if a quadratic form $q$ over a field $k$ of characteristic $\ne 2$ is an $I^n$-neighbor for some $n$, then there are integers $r, s \geq 1$ such that $q$ satisfies $\lgp(r,s)$. For large $n$, however, it is challenging to determine when a quadratic form belongs to $I^n(k)$, so it is difficult to verify that $q$ is an $I^n$-neighbor. Despite this, for $n \leq 3$, this verification can be done relatively easily. The fundamental ideal $I(k)$ consists of quadratic forms of even dimension, and $I^2(k)$ consists of even-dimensional quadratic forms with trivial discriminant. It is therefore straightforward to check if a quadratic form belongs to $I(k)$ or $I^2(k)$, so it is easy to check if $q$ is an $I^n$-neighbor for $n = 1, 2$.

For $n = 3$, the Witt and Hasse invariants of $q$, denoted by $c(q)$ and $s(q)$, respectively, allow us to easily verify whether a quadratic form belongs to $I^3(k)$. Indeed, a quadratic form $q$ over $k$ of dimension $2r$ belongs to $I^3(k)$ if and only if $\det q = (-1)^r$ and $c(q) = 1 \in \Br(k)$ \cite[p.~138]{lam}, where $\Br(k)$ denotes the Brauer group of $k$. We now briefly recall the definitions of these invariants $c(q)$ and $s(q)$ (see, e.g., \cite[Chapter~V.3]{lam}).

Given an $n$-dimensional quadratic form $q \simeq \langle a_1, \ldots, a_n \rangle$ over $k$, let $C(q)$ denote the Clifford algebra of $q$. The Clifford algebra $C(q)$ has a $\mathbb{Z} / 2 \mathbb{Z}$-grading, and we let $C_0(q)$ denote the ``even part'' of $C(q)$. If $n$ is even, then $C(q)$ is a central simple $k$-algebra, and if $n$ is odd, $C_0(q)$ is a central simple $k$-algebra. The \textit{Witt invariant} of $q$, denoted by $c(q)$, is then defined by
\[
	c(q) = \begin{cases}
		[C(q)] &\text{ if $n$ is even}, \\
		[C_0(q)] &\text{ if $n$ is odd},
	\end{cases}
\]
where $[C(q)]$, $[C_0(q)]$ denote the classes of these central simple $k$-algebras in $\Br(k)$. 

The Hasse invariant of $q \simeq \langle a_1, \ldots, a_n \rangle$ has a more straightforward definition using quaternion algebras. For any $a, b \in k^{\times}$, we let $\left(\frac{a, b}{k}\right)$ denote the generalized quaternion algebra over $k$, which is the central simple $k$-algebra generated by $i, j$ such that $i^2 = a, j^2 = b$, and $ij = -ji$. The \textit{Hasse invariant} of $q$, denoted by $s(q)$, is defined by the class of
\[
	\prod_{1 \leq i < j \leq n} \left(\frac{a_i, a_j}{k}\right)
\]
in $\Br(k)$. If $n = 1$, we take this product to be 1. 

The Hasse invariant is more amenable to computations than the Witt invariant, particularly because, for quadratic forms $q_1, q_2$ over $k$, we have (see, e.g., \cite[p.~119]{lam})
\[
	s(q_1 \perp q_2) = s(q_1)s(q_2) \left(\frac{\det q_1, \det q_2}{k}\right).
\]
However, these two invariants are closely related to one another via \cite[Proposition~V.3.20]{lam}.

We will now use the Witt invariant to find necessary and sufficient conditions for a quadratic form to be an $I^3$-neighbor of some complementary dimension, repeatedly using \cite[Proposition~V.3.20]{lam} and basic properties of quaternion algebras (see, e.g., \cite[Chapter~III, Sections 1 and 2]{lam}) in the process.

\begin{prop}
\label{I^3 neighbors of comp dim 1}
Over a field $k$ of characteristic $\ne 2$, an odd-dimensional quadratic form $q$ of dimension $\geq 3$ is an $I^3$-neighbor of complementary dimension 1 if and only if $q$ has trivial Witt invariant, i.e., $c(q) = 1$.
\end{prop}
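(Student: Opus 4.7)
The plan is to use the well-known characterization that a form $\psi$ lies in $I^3(k)$ if and only if $\psi \in I^2(k)$ (equivalently, $\psi$ has even dimension and trivial signed discriminant) and $c(\psi) = 1$. Writing $\dim q = 2r - 1$, the condition $\varphi := q \perp \langle a \rangle \in I^2(k)$ forces $\det \varphi = a \det q = (-1)^r$ in $k^\times/k^{\times 2}$, which determines $a \equiv (-1)^r \det q$ modulo squares (consistent with Proposition \ref{isometric complementary forms}, since the complementary dimension $1$ is less than $2^{3-1}$). Conversely, with this canonical choice $\varphi$ automatically lies in $I^2(k)$, so the proposition reduces to the assertion that, for $\varphi = q \perp \langle (-1)^r \det q \rangle$, one has $c(\varphi) = 1$ if and only if $c(q) = 1$.

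I would prove the stronger equality $c(\varphi) = c(q)$ in $\Br(k)$. The standard formula for the Hasse invariant of an orthogonal sum gives $s(\varphi) = s(q) \cdot (\det q, a)$; substituting $a = (-1)^r \det q$ and using the identity $(x, x) = (x, -1)$ simplifies this to
\[
    s(\varphi) = s(q) \cdot (\det q, -1)^{r+1}.
\]
Next, \cite[Proposition~V.3.20]{lam} expresses $c(\psi)$ as $s(\psi)$ multiplied by a universal correction class in $\Br_2(k)$ depending only on $\dim \psi \pmod 8$ and $\det \psi$. Applying this formula to both $q$ (odd-dimensional, determinant $\det q$) and $\varphi$ (even-dimensional, determinant $(-1)^r$), the desired identity $c(\varphi) = c(q)$ reduces to checking, for each residue of $r$ modulo $4$, that the correction term for $\varphi$ multiplied by $(\det q, -1)^{r+1}$ reproduces exactly the correction term for $q$.

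The main obstacle is this final case-by-case bookkeeping: the four residues $r \equiv 0, 1, 2, 3 \pmod 4$ correspond to $(\dim q, \dim \varphi) \pmod 8 \in \{(7, 0), (1, 2), (3, 4), (5, 6)\}$, and in each case the verification reduces to a short identity among quaternion symbols using bilinearity together with $(x, -x) = 1$ and $(x, x) = (x, -1)$. Once $c(\varphi) = c(q)$ is established, the proposition follows immediately from the characterization of $I^3(k)$ via $c$.
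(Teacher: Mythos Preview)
Your proposal is correct and follows essentially the same route as the paper: both arguments pin down the unique complementary entry $a \equiv (-1)^r\det q$ from the $I^2$-condition, then verify $c(q \perp \langle a\rangle) = c(q)$ by translating to Hasse invariants via \cite[Proposition~V.3.20]{lam} and checking the four residues of $\dim q$ modulo $8$. Your intermediate simplification $s(\varphi) = s(q)\cdot(\det q,-1)^{r+1}$ is a clean way to organize the case check, but it is the same computation the paper carries out.
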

\begin{proof}
Let $d = \det q$, and first assume that $q$ is an $I^3$-neighbor of complementary dimension 1; i.e., there is a one-dimensional quadratic form $\sigma_1$ over $k$ such that $q \perp \sigma_1 \in I^3(k)$. Calculating determinants, we have
\[
	\sigma_1 \simeq \left\langle (-1)^{\frac{\dim q + 1}{2}} d\right\rangle.
\]
So $c(q \perp \langle \pm d \rangle) = 1$. Considering cases of $\dim q \mod 8$, a straightforward calculation shows that $c(q) = c(q \perp \langle \pm d \rangle) = 1$, which completes the proof of the forward implication. We show this calculation in the case of $\dim q \equiv 1 \mod 8$, and the other cases follow in a similar fashion.

Since $\dim q \equiv 1 \mod 8$, we have $\sigma_1 \simeq \langle -d \rangle$, and
\[
	1 = c(q \perp \langle -d \rangle) = s(q \perp \langle -d \rangle) = s(q) \left(\frac{d, -d}{k}\right) = s(q) = c(q).
\]

Conversely, suppose that $q$ has trivial Witt invariant. Then letting
\[
	q' = q \perp \left\langle (-1)^{\frac{\dim q + 1}{2}} d \right\rangle,
\]
we have $q' \in I^2(k)$. Moreover, the same calculations as those above show that $c(q') = c(q) = 1$, hence $q' \in I^3(k)$. Therefore $q$ is an $I^3$-neighbor of complementary dimension 1.
\end{proof}
\begin{remark}
A seven-dimensional quadratic form $q$ over a field $k$ of characteristic $\ne 2$ is an $I^3$-neighbor of complementary dimension 1 if and only if $q$ is a Pfister neighbor. So by Proposition~\ref{I^3 neighbors of comp dim 1} we conclude that $q$ is a Pfister neighbor if and only if $c(q) = 1$. This agrees with an observation of Knebusch \cite[p.~11]{knebusch}.
\end{remark}

\begin{prop}
\label{I^3 neighbors of comp dim 2}
Over a field $k$ of characteristic $\ne 2$, an even-dimensional quadratic form $q$ of dimension $\geq 4$ is an $I^3$-neighbor of complementary dimension 2 if and only if $c(q) = \left(\frac{a, d_{\pm}q}{k}\right)$ for some $a \in k^{\times}$, where $d_{\pm}q$ is the signed determinant of $q$.
\end{prop}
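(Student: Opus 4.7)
The plan is to translate the condition ``$q\perp\sigma_2\in I^3(k)$'' into an equation in the Brauer group $\Br(k)$, using the two invariants that distinguish $I^3(k)$ inside $I(k)$: the signed determinant and the Witt invariant. Since $\dim(q\perp\sigma_2)$ is even, $q\perp\sigma_2\in I^3(k)$ is equivalent to $d_\pm(q\perp\sigma_2)=1$ together with $c(q\perp\sigma_2)=1$.

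First I would handle the signed determinant. Using the identity
\[
    d_\pm(\varphi_1\perp\varphi_2)=(-1)^{\dim\varphi_1\cdot\dim\varphi_2}\,d_\pm\varphi_1\cdot d_\pm\varphi_2
\]
with $\varphi_1=q$ even-dimensional and $\varphi_2=\sigma_2$, the condition $d_\pm(q\perp\sigma_2)=1$ forces $\det\sigma_2\equiv -d_\pm q\pmod{k^{\times2}}$. Hence, up to isometry, the candidate complementary forms are precisely $\sigma_2\simeq\langle a,-a\cdot d_\pm q\rangle$ as $a$ ranges over $k^\times/k^{\times 2}$. The proposition then reduces to the claim that $c(q\perp\langle a,-a\cdot d_\pm q\rangle)=1$ is solvable for some $a\in k^\times$ if and only if $c(q)=(b,d_\pm q)$ for some $b\in k^\times$.

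Next I would compute $c(q\perp\sigma_2)$ for $\sigma_2$ of this shape. A direct calculation with the Hasse invariant, exploiting $(a,-a)=1$, gives $s(\sigma_2)=(a,d_\pm q)$; then the multiplicativity formula $s(\varphi_1\perp\varphi_2)=s(\varphi_1)s(\varphi_2)(\det\varphi_1,\det\varphi_2)$ yields an expression of the form $s(q\perp\sigma_2)=s(q)\cdot(a,d_\pm q)\cdot E(q)$, where $E(q)\in\Br(k)$ depends only on $n=\dim q$ and $\det q$, not on $a$. Converting Hasse invariants to Witt invariants via \cite[Proposition~V.3.20]{lam}, applied to both $q$ (dimension $n$) and $q\perp\sigma_2$ (dimension $n+2$), produces an identity of the form $c(q\perp\sigma_2)=c(q)\cdot(-a,d_\pm q)$.

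The key step, and main obstacle, is verifying that all the accumulated correction quaternion classes coming from the two applications of V.3.20 combine cleanly into the single factor $(-a,d_\pm q)$. This requires a case analysis on $n\pmod 8$ (since V.3.20 distinguishes the four residue classes of even $n$ modulo $8$) and careful use of the standard quaternion identities $(x,x)=(x,-1)$, $(x,-x)=1$, $(x,y)(x,z)=(x,yz)$, together with the explicit relation $d_\pm q=(-1)^{n/2}\det q$ for even $n$. Once this identity is established, $c(q\perp\sigma_2)=1$ reads $c(q)=(-a,d_\pm q)$, which as $a$ ranges over $k^\times$ is solvable precisely when $c(q)=(b,d_\pm q)$ for some $b\in k^\times$, giving the claimed iff.
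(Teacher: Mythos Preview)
Your proposal is correct and follows essentially the same approach as the paper: parametrize the admissible $\sigma_2$ via the signed-determinant constraint, then reduce to a Hasse/Witt invariant computation using \cite[Proposition~V.3.20]{lam} with a case analysis on $\dim q \bmod 8$. The only cosmetic differences are that you write $\sigma_2\simeq\langle a,-a\,d_\pm q\rangle$ where the paper writes $\langle -a,(-1)^{\dim q/2}a\det q\rangle$ (a reparametrization $a\leftrightarrow -a$), and you package the computation as a single identity $c(q\perp\sigma_2)=c(q)\cdot(-a,d_\pm q)$ rather than doing the two implications separately.
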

\begin{proof}
Let $d = \det q$. We first focus on the reverse implication, so suppose there is some $a \in k^{\times}$ such that $c(q) = \left(\frac{a, d_{\pm}q}{k}\right)$. Now let $q' = q \perp \left\langle -a, (-1)^{\frac{\dim q}{2}}ad \right\rangle$. Then $q' \in I^2(k)$ by construction, and considering cases of $\dim q \mod 8$, calculations show that $c(q') = 1$. So $q' \in I^3(k)$, thus $q$ is an $I^3$-neighbor of complementary dimension 2. We show this calculation when $\dim q \equiv 2 \mod 8$, and the other cases follow in a similar fashion.

Because $\dim q \equiv 2 \mod 8$, we have $q' = q \perp \langle -a, -ad \rangle$, hence
\begin{align*}
	c(q') &= c(q \perp \langle -a, -ad \rangle) = s(q \perp \langle -a, -ad \rangle) \left(\frac{-1, -1}{k}\right) \\
	&= s(q)s(\langle -a, -ad \rangle) \left(\frac{d, d}{k}\right)\left(\frac{-1, -1}{k}\right) = s(q)\left(\frac{-a, -ad}{k}\right)\left(\frac{-1, d}{k}\right)\left(\frac{-1, -1}{k}\right) \\
	&= s(q) \left(\frac{-a, -d}{k}\right)\left(\frac{-1, -d}{k}\right) = s(q)\left(\frac{a, -d}{k}\right) = c(q)\left(\frac{a, d_{\pm}q}{k}\right) = 1.
\end{align*}

Conversely, suppose that $q$ is an $I^3$-neighbor of complementary dimension 2; i.e., there exists a two-dimensional form $\sigma_2$ over $k$ such that $q \perp \sigma_2 \in I^3(k)$. In particular, $q \perp \sigma_2 \in I^2(k)$, so calculating determinants, we must have, for some $a \in k^{\times}$,
\[
	\sigma_2 \simeq \left\langle -a, (-1)^{\frac{\dim q}{2}}ad \right\rangle.
\]
Since $c(q \perp \sigma_2) = 1$, a calculation similar to the one in the previous paragraph shows $c(q) = \left(\frac{a, d_{\pm}q}{k}\right)$, as desired.
\end{proof}
\begin{remarks} Let $q$ be an even-dimensional quadratic form over a field $k$ of characteristic $\ne 2$ with $\dim q \geq 4$. 
\begin{enumerate}[label=(\arabic*)]
	\item If $c(q) = 1 = \left(\frac{1, d_{\pm}q}{k}\right)$, then Proposition \ref{I^3 neighbors of comp dim 2} implies that $q$ is an $I^3$-neighbor of complementary dimension 2.
	
	\item If $q \in I^2(k)$, then by Proposition \ref{I^3 neighbors of comp dim 2}, $q$ is an $I^3$-neighbor of complementary dimension 2 if and only if $q \in I^3(k)$ since $d_{\pm} q = 1$ and $\left(\frac{a, 1}{k}\right) = 1$ for any $a \in k^{\times}$.
	
	\item If $\dim q = 4$ with $d = \det q$ and $K = k\left(\sqrt{d}\right)$, then $q$ is isotropic over $k$ if and only if $c(q_K) = 1$ \cite[Remark~V.3.24]{lam}. If $q$ is an $I^3$-neighbor of complementary dimension 2, then by Proposition \ref{I^3 neighbors of comp dim 2}, $c(q) = \left(\frac{a, d}{k}\right)$ for some $a \in k^{\times}$. Therefore $c(q_K) = \left(\frac{a, 1}{K}\right) = 1$, which implies that $q$ is isotropic over $k$. This agrees with Lemma \ref{I^n neighbors of small dimension} since $2^3 > 6$.
	
	\item If $\dim q = 6$, then $q$ is an $I^3$-neighbor of complementary dimension 2 if and only if $q$ is a Pfister neighbor. So if $d = \det q$, then by Proposition \ref{I^3 neighbors of comp dim 2}, $q$ is a Pfister neighbor if and only if $c(q) = \left(\frac{a, -d}{k}\right)$ for some $a \in k^{\times}$. Thus $c\left(q_{k(\sqrt{-d})}\right) = 1$, i.e., $c(q)$ is split by $k\left(\sqrt{-d}\right)$. This agrees with an observation of Knebusch \cite[p.~10]{knebusch}.
\end{enumerate}
\end{remarks}

\begin{prop}
\label{I^3 neighbors of comp dim 3}
Over a field $k$ of characteristic $\ne 2$, an odd-dimensional quadratic form $q$ of dimension $\geq 5$ is an $I^3$-neighbor of complementary dimension 3 if and only if its Witt invariant is Brauer equivalent to a quaternion algebra, i.e., $c(q) = \left(\frac{a, b}{k}\right) \in \Br(k)$ for some $a, b \in k^{\times}$.
\end{prop}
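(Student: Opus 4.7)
The plan mirrors the approach used in Propositions \ref{I^3 neighbors of comp dim 1} and \ref{I^3 neighbors of comp dim 2}: I will compute $c(q \perp \sigma_3)$ via Hasse invariants, using the multiplicativity formula $s(q_1 \perp q_2) = s(q_1)s(q_2)\left(\frac{\det q_1,\det q_2}{k}\right)$ together with the translation between $c$ and $s$ from \cite[Proposition~V.3.20]{lam}. The key observation is that, for any three-dimensional form $\sigma_3 = \langle\alpha,\beta,\gamma\rangle$, the Witt invariant $c(\sigma_3) = \left(\frac{-\alpha\beta,-\alpha\gamma}{k}\right)$ is \emph{literally} a quaternion symbol in $\Br(k)$, so it will suffice to match $c(q)$ to $c(\sigma_3)$ once the determinant constraint is imposed.

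Set $d = \det q$ and $\eta = (-1)^{(\dim q + 3)/2}$. The requirement $q \perp \sigma_3 \in I^2(k)$ forces $\det \sigma_3 \equiv \eta d \pmod{k^{\times 2}}$. Under this constraint, a case-by-case computation over the residue classes of $\dim q$ modulo $8$ — collapsing correction factors via the identities $\left(\frac{x,-x}{k}\right) = 1$ and $\left(\frac{x,y^2 z}{k}\right) = \left(\frac{x,z}{k}\right)$ — yields the clean identity
\[
    c(q \perp \sigma_3) = c(q)\,c(\sigma_3).
\]
Hence $q \perp \sigma_3 \in I^3(k)$ if and only if $c(q) = c(\sigma_3)$.

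The forward direction is then immediate: if $\sigma_3 = \langle\alpha,\beta,\gamma\rangle$ realizes $q$ as an $I^3$-neighbor of complementary dimension $3$, then $c(q) = c(\sigma_3) = \left(\frac{-\alpha\beta,-\alpha\gamma}{k}\right)$, a quaternion symbol. For the reverse direction, given $c(q) = \left(\frac{a,b}{k}\right)$, I propose to take
\[
    \sigma_3 = \left\langle \eta abd,\; -\eta bd,\; -\eta ad\right\rangle.
\]
A short verification shows $\det \sigma_3 \equiv \eta d \pmod{k^{\times 2}}$ and $c(\sigma_3) = \left(\frac{a^2 b d^2, a b^2 d^2}{k}\right) = \left(\frac{a,b}{k}\right)$, so $c(q \perp \sigma_3) = 1$ and thus $q \perp \sigma_3 \in I^3(k)$, exhibiting $q$ as an $I^3$-neighbor of complementary dimension $3$.

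The principal obstacle is the case-by-case verification that the correction factor in the $c(q \perp \sigma_3)$ identity collapses in each of the residue classes of $\dim q$ modulo $8$: one must carefully track the sign $\eta$, the appropriate $\epsilon$-factor from \cite[Proposition~V.3.20]{lam} for each of $\dim q$ and $\dim q + 3$, and the quaternion-symbol identities. The work is tedious but purely mechanical, following the template already exhibited in the proof of Proposition \ref{I^3 neighbors of comp dim 2}, where only one residue class is written out in detail and the others are stated to proceed analogously.
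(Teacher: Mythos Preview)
Your proposal is correct and takes essentially the same approach as the paper: both reduce the question to a Hasse-invariant computation via \cite[Proposition~V.3.20]{lam} and verify the resulting identity case-by-case modulo $8$. The only cosmetic difference is that the paper first scales $q$ (using $c(\alpha q) = c(q)$ for odd-dimensional $q$) to normalize $\det q$ and then works with the canonical complementary form $\langle a, b, -ab\rangle$, whereas you avoid scaling and instead isolate the clean identity $c(q \perp \sigma_3) = c(q)\,c(\sigma_3)$ under the $I^2$-determinant constraint; both packagings lead to the same mechanical verification.
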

\begin{proof}
First, suppose that $c(q) = \left(\frac{a, b}{k}\right)$ for some $a, b \in k^{\times}$. Since $q$ is odd-dimensional, for any $\alpha \in k^{\times}$, $c(q) = c(\alpha \cdot q)$ \cite[p.~118]{lam}. Moreover, $q$ is an $I^3$-neighbor if and only if $\alpha \cdot q$ is an $I^3$-neighbor. So after scaling, we may assume $\det q = (-1)^{\frac{\dim q + 1}{2}}$.

Now let $q' = q \perp \langle a, b, -ab \rangle \in I^2(k)$, where $a, b \in k^{\times}$ are such that $c(q) = \left(\frac{a, b}{k}\right)$. Again, considering cases depending on $\dim q \mod 8$, straightforward calculations like those seen in the proofs of Propositions \ref{I^3 neighbors of comp dim 1} and \ref{I^3 neighbors of comp dim 2} show that $c(q') = 1$, which proves that $q$ is an $I^3$-neighbor of complementary dimension 3.

Conversely, suppose $q$ is an $I^3$-neighbor of complementary dimension 3; i.e., there is a three-dimensional form $\sigma_3$ over $k$ such that $q \perp \sigma_3 \in I^3(k)$. Let $d = \det q$. Then $(\pm d) \cdot (q \perp \sigma_3) \in I^3(k)$, where we scale by $(-1)^{\frac{\dim q+1}{2}} d$. So we have $\pm d \cdot q \perp \pm d \cdot \sigma_3 \in I^3(k)$. By our choice of scaling, this implies that $\det (\pm d \cdot \sigma_3) = -1$, so there exist $a, b \in k^{\times}$ such that $\pm d \cdot \sigma_3 \simeq \langle a, b, -ab \rangle$. Thus $\pm d \cdot q \perp \langle a, b, -ab \rangle \in I^3(k)$, which implies $c(\pm d \cdot q \perp \langle a, b, -ab \rangle) = 1$. Using that $q$ is odd-dimensional, a calculation similar to those above shows that $c(q) = c(\pm d \cdot q) = \left(\frac{a, b}{k}\right)$.
\end{proof}
\begin{remark}
A five-dimensional quadratic form $q$ is an $I^3$-neighbor of complementary dimension~$3$ if and only if $q$ is a Pfister neighbor. So Proposition \ref{I^3 neighbors of comp dim 3} implies that a five-dimensional quadratic form is a Pfister neighbor if and only if its Witt invariant is Brauer equivalent to a quaternion algebra, recovering an observation of Knebusch \cite[p.~10]{knebusch}.
\end{remark}

\begin{prop}
Let $q$ be an even-dimensional quadratic form of dimension $\geq 6$ over a field~$k$ of characteristic $\ne 2$. Then $q$ is an $I^3$-neighbor of complementary dimension 4 with a complementary form that represents its determinant if and only if the Witt invariant of $q$ is Brauer equivalent to a quaternion algebra, i.e., $c(q) = \left(\frac{a, b}{k}\right) \in \Br(k)$ for some $a, b \in k^{\times}$.
\end{prop}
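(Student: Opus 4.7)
The plan is to adapt the strategy used in the proofs of Propositions \ref{I^3 neighbors of comp dim 2} and \ref{I^3 neighbors of comp dim 3}: exhibit an explicit candidate complementary form determined by the quaternion presentation of $c(q)$, and verify the required $I^3$-membership by direct computation of the Hasse and Witt invariants. Throughout, let $d = \det q$.

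For the reverse direction, suppose $c(q) = \left(\frac{a,b}{k}\right)$ for some $a,b \in k^{\times}$. I would propose the candidate
\[
    \sigma_4 \simeq \left\langle d,\, a,\, b,\, (-1)^{\dim q / 2}\, ab \right\rangle,
\]
which manifestly represents $d$. A direct determinant computation gives $\det(q \perp \sigma_4) = (-1)^{(\dim q + 4)/2}$, so $q \perp \sigma_4 \in I^2(k)$. Using the multiplicativity $s(q_1 \perp q_2) = s(q_1) s(q_2) \left(\frac{\det q_1, \det q_2}{k}\right)$ together with the standard identities $\left(\frac{x,x}{k}\right) = \left(\frac{x,-1}{k}\right)$ and $\left(\frac{x,-x}{k}\right) = 1$, I would simplify $s(q \perp \sigma_4)$ and then invoke \cite[Proposition~V.3.20]{lam} case by case on $\dim q \pmod 8$ to conclude $c(q \perp \sigma_4) = 1$, giving $q \perp \sigma_4 \in I^3(k)$.

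For the forward direction, since $\sigma_4$ represents $d$, I can write $\sigma_4 \simeq \langle d, a, b, z \rangle$ for some $a, b, z \in k^{\times}$. The requirement $q \perp \sigma_4 \in I^2(k)$ then forces $z \equiv (-1)^{\dim q / 2}\, ab$ modulo squares, reproducing the candidate form from the reverse direction. The further condition $c(q \perp \sigma_4) = 1$, unwound through the same Hasse-invariant computations, isolates $c(q) = \left(\frac{a,b}{k}\right)$ for this explicit pair $a, b$, completing the equivalence.

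The main obstacle I expect is the bookkeeping in the case analysis: the formula \cite[Proposition~V.3.20]{lam} relating $c(\varphi)$ and $s(\varphi)$ depends on $\dim \varphi \pmod 8$, so the correction terms it introduces combine differently with the cross term $\left(\frac{d, (-1)^{\dim q/2} d}{k}\right)$ in each of the residues $\dim q \equiv 0, 2, 4, 6 \pmod 8$. The computation should however collapse in every case to the single clean identity $c(q) = \left(\frac{a,b}{k}\right)$, in parallel with the case analyses carried out in the proofs of Propositions \ref{I^3 neighbors of comp dim 2} and \ref{I^3 neighbors of comp dim 3}.
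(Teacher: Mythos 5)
Your overall strategy — exhibit an explicit $4$-dimensional complementary form determined by a quaternion presentation of $c(q)$, then verify $I^3$-membership by direct Hasse/Witt invariant bookkeeping — is a legitimately different route from the paper's. The paper instead attaches a single entry $\left\langle (-1)^{\dim q/2} d \right\rangle$ to $q$ to produce an odd-dimensional form with the same Witt invariant, and then simply cites Proposition \ref{I^3 neighbors of comp dim 3} to obtain a $3$-dimensional complement $\sigma_3$ with $\det\sigma_3 = 1$; the four-dimensional complementary form is then $\left\langle (-1)^{\dim q/2} d\right\rangle \perp \sigma_3$. That reduction reuses work already done and keeps the case analysis to a single $c(q) = c(q\perp\langle \pm d\rangle)$ computation. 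Your direct computation would also work in principle, but it repeats essentially all of the residue-mod-$8$ bookkeeping that Proposition \ref{I^3 neighbors of comp dim 3} already encapsulates.

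However, there is a genuine gap concerning what ``represents its determinant'' means. In the statement, ``its'' refers to the complementary form, and the paper's proof makes this explicit: the complementary form must represent $\det\sigma_4$, which equals $(-1)^{\dim q/2}\,d$, not $\det q = d$. You instead read it as representing $d$. The two agree only when $\dim q \equiv 0 \pmod 4$; when $\dim q \equiv 2 \pmod 4$ they differ by a sign. Your candidate $\sigma_4 \simeq \left\langle d, a, b, (-1)^{\dim q/2} ab\right\rangle$ manifestly represents $d$, but its determinant is $(-1)^{\dim q/2}\,d$, and there is no reason in general for the form to represent $-d$ when $\dim q \equiv 2 \pmod 4$ — so the reverse direction does not actually produce a complementary form with the required representation property. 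Symmetrically, in the forward direction you write $\sigma_4 \simeq \langle d, a, b, z\rangle$ using the wrong represented element, which is not what the hypothesis gives you. The fix is small: take $\sigma_4 \simeq \left\langle (-1)^{\dim q/2} d,\, a,\, b,\, ab\right\rangle$, which has determinant $(-1)^{\dim q/2}\,d$ and visibly represents it; in the forward direction, the hypothesis lets you write $\sigma_4 \simeq \left\langle (-1)^{\dim q/2} d\right\rangle \perp \sigma_3$ with $\det\sigma_3 = 1$, after which your computation (or the paper's shortcut through Proposition \ref{I^3 neighbors of comp dim 3}) goes through.
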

\begin{proof}
First, suppose that $c(q) = \left(\frac{a, b}{k}\right) \in \Br(k)$ for some $a, b \in k^{\times}$. Let $d = \det q$, and let $q' = q \perp \left\langle (-1)^{\frac{\dim q }{2}} d \right\rangle$. A calculation similar to those above shows that $c(q) = c(q')$. In particular,~$q'$ is an odd-dimensional quadratic form whose Witt invariant is Brauer equivalent to a quaternion algebra. Thus $q'$ is an $I^3$-neighbor of complementary dimension 3 by Proposition \ref{I^3 neighbors of comp dim 3}. So there is some three-dimensional form $\sigma_3$ over $k$ such that $q' \perp \sigma_3 \in I^3(k)$. By calculating determinants, we have $\det \sigma_3 = 1$. From this we see that $q \perp \langle \pm d \rangle \perp \sigma_3 \in I^3(k)$, where $\langle \pm d \rangle \perp \sigma_3$ has determinant~$\pm d$. The form $\langle \pm d \rangle \perp \sigma_3$ is a four-dimensional complementary form of $q$ that represents its determinant by \cite[Theorem~I.2.3]{lam}, proving the reverse implication.

Conversely, suppose that $q$ is an $I^3$-neighbor of complementary dimension 4 with a complementary form $\sigma_4$ that represents its determinant. In particular, $q \perp \sigma_4 \in I^3(k)$, and letting $d = \det q$, by calculating determinants we have $\det \sigma_4 = (-1)^{\frac{\dim q}{2}} d$. By assumption, $\sigma_4$ represents its determinant, so by \cite[Theorem~I.2.3]{lam}, $\sigma_4 \simeq \langle \pm d \rangle \perp \sigma_3$ for some three-dimensional form $\sigma_3$ over $k$. The form $q \perp \langle \pm d \rangle$ is therefore an odd-dimensional $I^3$-neighbor of complementary dimension 3, so by Proposition \ref{I^3 neighbors of comp dim 3} we have $c(q \perp \langle \pm d \rangle) = \left(\frac{a, b}{k}\right)$ for some $a, b \in k^{\times}$. Finally, since $c(q) = c(q \perp \langle \pm d \rangle)$ (with $d, -d$ chosen according to $\dim q \mod 4$), it follows that $c(q) = \left(\frac{a, b}{k}\right)$.
\end{proof}

\subsection{All quadratic forms}
\label{all forms}
In Subsection \ref{counterexamples} we found numerous counterexamples to $\lgp(r, 1)$ for various integers $r$, and in Subsection \ref{I^n-neighbors} we saw that by requiring a quadratic form $q$ to be an $I^n$-neighbor for some $n$, we can find integers $r, s \geq 1$ such that $q$ satisfies $\lgp(r, s)$. In this subsection we will consider all quadratic forms over a field $k$ and ask whether we can find integers $r, s \geq 1$ such that all quadratic forms over $k$ satisfy $\lgp(r, s)$.

\begin{lemma}
\label{increase r and s}
Let $k$ be a field of characteristic $\ne 2$ equipped with a non-empty set $V$ of non-trivial discrete valuations. Suppose there exist integers $r, s \geq 1$ such that all quadratic forms over $k$ satisfy $\emph{LGP}(r, s)$ with respect to $V$. Then for all integers $j \geq 0$, all quadratic forms over $k$ satisfy $\emph{LGP}(r+j, s+j)$ with respect to $V$.
\end{lemma}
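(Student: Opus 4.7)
The plan is straightforward induction on $j \geq 0$. The base case $j = 0$ is exactly the hypothesis, so the only content is in the inductive step: assuming that every quadratic form over $k$ satisfies $\lgp(r+j, s+j)$ with respect to $V$, I would show the same holds with $j$ replaced by $j+1$. Let $q$ be any quadratic form over $k$ with $i_W(q_v) \geq r+j+1$ for all $v \in V$. Since in particular $i_W(q_v) \geq r+j$, the inductive hypothesis applied to $q$ gives $i_W(q) \geq s+j \geq 1$, so $q$ is isotropic over $k$ and I may use the Witt Decomposition Theorem to write $q \simeq \HH \perp q_1$ for some form $q_1$ over $k$.

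The key observation is that splitting off one copy of $\HH$ decreases the Witt index by exactly $1$, both globally and at each completion. Indeed, passing to $k_v$ yields $q_v \simeq \HH \perp q_{1,v}$, and Witt cancellation (or directly the additivity of Witt indices under orthogonal sums with hyperbolic forms) gives $i_W(q_{1,v}) = i_W(q_v) - 1 \geq r+j$ for every $v \in V$. Therefore the inductive hypothesis applies to $q_1$ and yields $i_W(q_1) \geq s + j$. Hence
\[
    i_W(q) = i_W(q_1) + 1 \geq s + j + 1,
\]
which completes the inductive step and the proof.

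There is no real obstacle here: the argument is almost entirely bookkeeping, once one notices that the hyperbolic plane behaves uniformly under the passage from $k$ to $k_v$. The only subtlety worth flagging is that the inductive hypothesis is used \emph{twice} at each step—first to guarantee that $q$ is globally isotropic (so that $q \simeq \HH \perp q_1$ makes sense), and second to transfer the local-to-global statement from $q$ to the smaller form $q_1$. No new quadratic form-theoretic input beyond the Witt Decomposition Theorem is needed.
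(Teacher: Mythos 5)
Your proof is correct and follows essentially the same approach as the paper: induction on $j$, using the inductive hypothesis first to split off a hyperbolic plane $\HH$ and then again on the reduced form, noting that the Witt index drops by exactly one on both sides. The only cosmetic difference is that the paper writes the step $i_W(q'_v) \geq r+j$ by unpacking $i_W\left((\HH \perp q')_v\right) \geq r+j+1$, whereas you invoke Witt cancellation directly; this is the same computation.
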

\begin{proof}
We prove the claim by induction on $j \geq 0$. The base case of $j = 0$ is trivial since we assumed all quadratic forms over $k$ satisfy $\lgp(r, s)$ with respect to $V$.

Now suppose for some $j \geq 0$ that all quadratic forms over $k$ satisfy $\lgp(r + j, s + j)$ with respect to $V$, and let $q$ be any quadratic form over $k$ such that $i_W(q_v) \geq r + j + 1$ for all $v \in V$. Then in particular, $i_W(q_v) \geq r + j$ for all $v \in V$, so by the induction hypothesis, $i_W(q) \geq s + j \geq 1$ over $k$. We can therefore write $q \simeq \HH \perp q'$ for some form $q'$ over $k$. Now, for all $v \in V$ we have
\[
	i_W(q_v) = i_W\left((\HH \perp q')_v\right) \geq r + j + 1,
\]
which implies that $i_W(q'_v) \geq r + j$ for all $v \in V$. By the induction hypothesis, the form $q'$ satisfies $\lgp(r + j, s + j)$ with respect to $V$, thus $i_W(q'_v) \geq r + j$ for all $v \in V$ implies that $i_W(q') \geq s + j$. This, in turn, implies that $i_W(q) = i_W\left(\HH \perp q'\right) \geq s + j + 1$, proving the claim by induction.
\end{proof}
A statement stronger than the converse of Lemma \ref{increase r and s} holds.
\begin{lemma}
\label{decreasing r and s}
Let $k$ be a field of characteristic $\ne 2$ equipped with a non-empty set $V$ of non-trivial discrete valuations, and let $r, s \geq 1$ be positive integers. If there exists some integer $j \geq 0$ such that all quadratic forms over $k$ satisfy $\emph{LGP}(r+j, s+j)$ with respect to $V$, then all quadratic forms over~$k$ satisfy $\emph{LGP}(r,s)$ with respect to $V$.
\end{lemma}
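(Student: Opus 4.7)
The plan is to reduce $\lgp(r,s)$ for an arbitrary form $q$ to $\lgp(r+j, s+j)$ applied to the augmented form $q \perp j\HH$, using the fact that adding hyperbolic planes shifts the Witt index by exactly the number of planes added, both locally and globally.

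More precisely, suppose $q$ is any quadratic form over $k$ with $i_W(q_v) \geq r$ for all $v \in V$. I would set $q' = q \perp j\HH$. Since the Witt index of an orthogonal sum with a hyperbolic form increases by the number of added hyperbolic planes, we have $i_W(q'_v) = i_W(q_v) + j \geq r + j$ for every $v \in V$. Applying the hypothesis that all quadratic forms over $k$ satisfy $\lgp(r+j, s+j)$ with respect to $V$ to the form $q'$, we conclude $i_W(q') \geq s + j$. Using the same identity $i_W(q') = i_W(q) + j$ on the global side gives $i_W(q) \geq s$, which is exactly what is needed.

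There is no real obstacle here: the only thing being used is the elementary identity $i_W(\varphi \perp \HH) = i_W(\varphi) + 1$ (applied $j$ times), which follows immediately from the Witt Decomposition Theorem recalled in the introduction. So the proof should be just a few lines, consisting of defining $q' = q \perp j\HH$, observing the local Witt index computation, invoking the hypothesis, and then extracting $i_W(q) \geq s$ from $i_W(q') \geq s+j$.
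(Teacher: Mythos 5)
Your proof is correct and is essentially identical to the paper's argument: both define $q' = q \perp j\HH$, use the identity $i_W(q' ) = i_W(q) + j$ (locally and globally) to pass to and from the $\lgp(r+j, s+j)$ hypothesis, and extract $i_W(q) \geq s$. No substantive difference.
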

\begin{proof}
The claim is trivial if $j = 0$, so suppose there is some integer $j \geq 1$ such that all quadratic forms over $k$ satisfy $\lgp(r+j, s+j)$ with respect to $V$. Let $q$ be any quadratic form over $k$ such that $i_W(q_v) \geq r$ for all $v \in V$, and let $q' = q \perp j\HH$. Then $i_W(q'_v) \geq r + j$ for all $v \in V$, and since all quadratic forms over $k$ satisfy $\lgp(r+j, s+j)$ with respect to $V$, we conclude that $i_W(q') \geq s + j$. Because $q' = q \perp j\HH$, this implies that $i_W(q) \geq s$. Hence $q$ satisfies $\lgp(r, s)$ with respect to~$V$.
\end{proof}

Combining Lemmas \ref{increase r and s} and \ref{decreasing r and s} gives the following result.
\begin{theorem} \label{varying r,s together}
Let $k$ be a field of characteristic $\ne 2$ equipped with a non-empty set $V$ of non-trivial discrete valuations. If there are integers $r, s \geq 1$ such that all quadratic forms over~$k$ satisfy $\emph{LGP}(r, s)$ with respect to $V$, then for any integer $j$ (not necessarily positive) such that $r + j, s + j \geq 1$, all quadratic forms over $k$ satisfy $\emph{LGP}(r+j, s+j)$ with respect to $V$. 
\end{theorem}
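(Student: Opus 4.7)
The plan is straightforward: this theorem is essentially just a bookkeeping combination of Lemmas \ref{increase r and s} and \ref{decreasing r and s}, split into two cases based on the sign of $j$.

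First I would handle the case $j \geq 0$. Here the conclusion is exactly the statement of Lemma \ref{increase r and s}: starting from the hypothesis that all quadratic forms over $k$ satisfy $\lgp(r,s)$ with respect to $V$, Lemma \ref{increase r and s} immediately gives that all quadratic forms over $k$ satisfy $\lgp(r+j, s+j)$ with respect to $V$.

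Next I would handle the case $j < 0$. Set $j' = -j > 0$, and note that the hypotheses $r + j \geq 1$ and $s + j \geq 1$ say precisely that both exponents are still positive. Now apply Lemma \ref{decreasing r and s} with the role of $(r, s, j)$ in that lemma played by $(r+j, s+j, j')$: the identities $r = (r+j) + j'$ and $s = (s+j) + j'$ show that our assumed $\lgp(r,s)$ is the same as $\lgp((r+j)+j', (s+j)+j')$, so Lemma \ref{decreasing r and s} immediately yields that all quadratic forms over $k$ satisfy $\lgp(r+j, s+j)$ with respect to $V$.

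There is no real obstacle here; the only thing to be careful about is to re-index cleanly when $j$ is negative so that the shifted exponents remain positive, which is guaranteed by the hypothesis $r+j, s+j \geq 1$. Since $j=0$ is covered by either case trivially, the two cases together prove the theorem.
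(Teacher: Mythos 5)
Your proof is correct and is exactly the argument the paper intends: the paper states Theorem~\ref{varying r,s together} as an immediate consequence of Lemmas~\ref{increase r and s} and~\ref{decreasing r and s}, and your case split on the sign of $j$ with the re-indexing $(r+j,s+j,j')$ for $j<0$ is precisely how those two lemmas combine.
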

Theorem \ref{varying r,s together} motivates the definition of the following invariant associated to $k$ and $V$.
\begin{defin}
\label{l-inv}
For a field $k$ of characteristic $\ne 2$ equipped with a non-empty set $V$ of non-trivial discrete valuations, let
\[
	l(k,V) = \inf\{r \geq 1 \mid \text{all quadratic forms over $k$ satisfy $\lgp(r, 1)$ with respect to $V$}\},
\]
where the infimum of the empty set is $\infty$.
\end{defin}
In particular, $l(k,V) = 1$ if and only if all quadratic forms over $k$ satisfy the local-global principle for isotropy with respect to $V$. The invariant $l(k,V)$ can therefore be seen as a measure of the failure of the local-global principle for isotropy of quadratic forms over $k$ with respect to $V$. The remainder of this subsection will be devoted to studying $l(k,V)$ for various $k, V$.

One of the first observations we make is that for fields $k$ with finite $u$-invariant, $u(k)$ can be used to give a natural upper bound on $l(k,V)$ for any non-empty set $V$ of non-trivial discrete valuations on $k$.
\begin{prop}
\label{first upper bound on l}
Let $k$ be a field of characteristic $\ne 2$ with $u(k) < \infty$. If $V$ is any non-empty set of non-trivial discrete valuations on $k$, then $l(k, V) \leq \left\lceil \frac{u(k) + 1}{2} \right\rceil$.
\end{prop}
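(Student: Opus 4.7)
The plan is to show that with $r = \left\lceil (u(k)+1)/2 \right\rceil$, every quadratic form $q$ over $k$ satisfies $\lgp(r,1)$ with respect to $V$; this gives $l(k,V) \leq r$ directly from Definition \ref{l-inv}. The only tools needed are the trivial bound $i_W(\varphi) \leq (\dim \varphi)/2$ for any regular form $\varphi$ and Lemma \ref{Witt index of large dimensional forms}.

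In detail, suppose $q$ is a quadratic form over $k$ with $i_W(q_v) \geq r$ for every $v \in V$. Since $V$ is non-empty, pick some $v \in V$. Because the Witt decomposition $q_v \simeq i_W(q_v)\HH \perp (q_v)_{an}$ forces $2\,i_W(q_v) \leq \dim q_v = \dim q$, we obtain
\[
    \dim q \;\geq\; 2\,i_W(q_v) \;\geq\; 2r \;=\; 2\left\lceil \frac{u(k)+1}{2}\right\rceil \;\geq\; u(k)+1,
\]
where the last inequality is an immediate case check (if $u(k)$ is odd then $2r = u(k)+1$; if $u(k)$ is even then $2r = u(k)+2$). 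Applying Lemma \ref{Witt index of large dimensional forms} with $j = 1$ to $q$ now yields $i_W(q) \geq 1$, as desired.

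There is no real obstacle here: the local Witt index hypothesis is actually stronger than needed, since it already forces $q$ to be globally isotropic via its dimension alone, independent of any local-global input. In particular, the argument does not use any special feature of $V$ beyond non-emptiness, so one could hope to sharpen the bound in subsequent results by exploiting the valuations more seriously.
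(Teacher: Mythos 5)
Your proof is correct and follows essentially the same argument as the paper: fix $v \in V$, bound $\dim q \geq 2\,i_W(q_v) \geq u(k)+1$, and conclude isotropy from the dimension exceeding $u(k)$. The only cosmetic difference is that you invoke Lemma \ref{Witt index of large dimensional forms} with $j=1$ where the paper cites the definition of $u(k)$ directly.
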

\begin{proof}
To prove the proposition, we must show that, with respect to $V$, all quadratic forms over~$k$ satisfy $\lgp\left(\left\lceil \frac{u(k) + 1}{2}\right\rceil, 1\right)$. To that end, let $q$ be any quadratic form over $k$ with $i_W(q_v) \geq \left\lceil \frac{u(k) + 1}{2}\right\rceil$ for all $v \in V$. This implies that
\[
	\dim q \geq 2i_W(q_v) \geq 2 \left\lceil \frac{u(k) + 1}{2}\right\rceil \geq u(k) + 1.
\]
So $\dim q > u(k)$, hence $q$ is isotropic over $k$; i.e., $i_W(q) \geq 1$. Thus $q$ satisfies $\lgp\left(\left\lceil \frac{u(k) + 1}{2} \right\rceil, 1\right)$ with respect to $V$.
\end{proof}

The upper bound for $l(k,V)$ given by Proposition \ref{first upper bound on l} is not always sharp, as the following example illustrates.
\begin{example}
\label{sharper upper bound}
Let $k$ be any field of characteristic $\ne 2$ with $u(k) = 2n$ for some integer $n \geq 1$, and suppose $k$ is equipped with a non-empty set $V$ of non-trivial discrete valuations with respect to which the local-global principle for isometry holds (e.g., $k = K(x_1, \ldots, x_r)$ for an integer $r \geq 2$ and $K$ an algebraically closed field of characteristic $\ne 2$ equipped with the set $V = V_r$ of non-trivial discrete valuations on $k$ that are trivial on $K(x_1, \ldots, x_{r-1})$). Then
\[
	l(k, V) \leq n < \left\lceil \frac{u(k) + 1}{2}\right\rceil = n + 1.
\]
Indeed, to show that $l(k, V) \leq n$, we must show that all quadratic forms over $k$ satisfy $\lgp(n, 1)$ with respect to $V$. To that end, let $q$ be any quadratic form over $k$ such that $i_W(q_v) \geq n$ for all $v \in V$. Then $\dim q \geq 2i_W(q_v) \geq 2n = u(k)$. If $\dim q > 2n$, then $q$ is automatically isotropic over~$k$, and therefore satisfies $\lgp(n, 1)$ with respect to $V$. If $\dim q = 2n$ and $i_W(q_v) \geq n$ for all $v \in V$, then $i_W(q_v) = n$ for all $v \in V$, and $q_v$ is hyperbolic over $k_v$ for all $v \in V$. Because the local-global principle for isometry holds over $k$ with respect to $V$, this implies that $q$ is hyperbolic over $k$, i.e., $i_W(q) = n \geq 1$. Thus $q$ satisfies $\lgp(n, 1)$ with respect to $V$.
\end{example}

Knowing that counterexamples to $\lgp(r, 1)$ exist over a field $k$ with respect to $V$ also allows us to give lower bounds on $l(k, V)$.
\begin{example}
Let $\ell$ be a field of characteristic $\ne 2$ such that $\ell \in \mathscr{A}_i(2)$ for some $i \geq 0$ and $u(\ell) = 2^i$. For any integer $r \geq 1$ let $L_r = \ell(x_1, \ldots, x_r)$, and for $r \geq 2$ let $V_r$ be the set of non-trivial discrete valuations on $L_r$ that are trivial on $L_{r-1}$. Then by Theorem \ref{ce to lgp}, for $r \geq 2$ there exists a quadratic form over $L_r$ that violates $\lgp\left(2^{i+r-2}, 1\right)$ with respect to $V_r$. Thus $l(L_r, V_r) \geq 2^{i+r - 2} + 1$. Furthermore, by the same reasoning as that in Example \ref{sharper upper bound}, we know that $l(L_r, V_r) \leq 2^{i+r-1}$. Therefore $2^{i+r-2} + 1 \leq l(L_r, V_r) \leq 2^{i + r - 1}.$
\end{example}
We conclude this section by computing $l(F, V)$ for $V$ the set of all discrete valuations on a semi-global field $F$, i.e., the function field of a curve over a complete discretely valued field $K$. Recall that a \textit{regular model} of $F$ is a regular connected projective curve $\mathscr{X}$ over the valuation ring~ $T$ of~$K$ with function field $F$. To any regular model $\mathscr{X}$ of $F$ we can associate a combinatorial object called the \textit{reduction graph} (defined in \cite[Section 6]{hhk15}), and we will use the reduction graph to compute $l(F,V)$. The reduction graph encodes the intersection configuration of the irreducible components of the closed fiber of a regular model of $F$, and Harbater, Hartmann, and Krashen have shown that, for various algebraic objects over $F$, the validity of certain local-global principles depends on whether the reduction graph is a tree. A regular model of $F$ always exists, but this model is not unique. However, whether the reduction graph is a tree does not depend on the choice of model \cite[Corollary~7.8]{hhk15}.
\begin{prop}
\label{l of a sg field}
Let $T$ be a complete discrete valuation ring with fraction field $K$ and residue field $k$ of characteristic $\ne 2$. Let $F$ be a one-variable function field over $K$, and let $V$ be the set of all discrete valuations on $F$. Then $l(F, V) \leq 2$, with $l(F, V) = 2$ if and only if the reduction graph of a regular model of $F$ is not a tree.
\end{prop}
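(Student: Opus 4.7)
The plan is to combine two results of Harbater, Hartmann, and Krashen \cite{hhk15} for semi-global fields: first, the local-global principle for hyperbolicity of quadratic forms over $F$ with respect to the divisorial valuations of a regular model (which passes automatically to the larger set $V$ of all discrete valuations on $F$, since enlarging the set of valuations only strengthens the local hypothesis of a local-global principle); and second, the characterization (with explicit $3$-dimensional counterexamples) that the local-global principle for isotropy of quadratic forms over $F$ of dimension $\ge 3$ with respect to $V$ holds if and only if the reduction graph of a regular model of $F$ is a tree.

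To establish $l(F,V) \le 2$, I would take a quadratic form $q$ over $F$ with $i_W(q_v) \ge 2$ at every $v \in V$ and split into cases according to $\dim q$. For $\dim q \le 3$ the hypothesis is vacuous, since $i_W(q_v) \le \lfloor \dim q / 2 \rfloor \le 1$. For $\dim q = 4$ the hypothesis forces $q_v \simeq 2\HH$ at every $v$, so the LGP for hyperbolicity gives $q \simeq 2\HH$ over $F$, and in particular $q$ is isotropic. For $\dim q \ge 5$, the condition $i_W(q_v) \ge 1$ already holds everywhere, and one invokes the HHK LGP for isotropy in this dimension range, which holds over semi-global fields independently of any tree hypothesis on the reduction graph. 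Combining the cases, every quadratic form over $F$ satisfies LGP$(2,1)$ with respect to $V$, so $l(F,V) \le 2$.

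For the sharp characterization, the tree case follows from the HHK LGP for isotropy for dim $\ge 3$ combined with the trivial LGP$(1,1)$ in dimensions $\le 2$, giving $l(F,V) = 1$; the non-tree case uses the explicit $3$-dimensional HHK counterexample to show LGP$(1,1)$ fails, forcing $l(F,V) \ge 2$, and combined with the upper bound $l(F,V) = 2$. The main obstacle is justifying the dim $\ge 5$ case of the upper bound without any finiteness hypothesis on $u(k)$; one must rely on the full force of the patching machinery to obtain the LGP for isotropy in large dimension unconditionally, or, as a fallback, attempt to reduce to the already-handled dim $\le 4$ cases by applying Proposition~\ref{I^n neighbors and lgp} to the $I^2$-neighbor structure of $q$ (every form of dim $\ge 3$ is an $I^2$-neighbor of complementary dimension at most $2$) together with Lemmas~\ref{Witt index of sum} and \ref{subform of hyperbolic form}. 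A subsidiary issue is checking that the HHK local-global statements, phrased there in terms of valuations from a specific regular model, extend to the full set $V$ of all discrete valuations on $F$; this extension follows from the elementary observation that adding more local conditions only makes a local-global principle easier to satisfy.
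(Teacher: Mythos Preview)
Your characterization argument has the dimension thresholds inverted, and this is a genuine gap. You assert that $\lgp(1,1)$ is ``trivial'' in dimensions $\leq 2$ and that the failure in the non-tree case is witnessed by $3$-dimensional counterexamples. Both claims are false. A binary form $\langle 1, -a \rangle$ is isotropic if and only if $a$ is a square in $F$, and whether this is detected by the completions $F_v$ is precisely the content of \cite[Theorem~9.11]{hhk15}: there exist $a \in F^\times$ that are squares in every $F_v$ but not in $F$ exactly when the reduction graph is not a tree. So the counterexamples to $\lgp(1,1)$ live in dimension $2$, not dimension $3$. Conversely, for $\dim q \geq 3$ the local-global principle for isotropy with respect to all discrete valuations on $F$ holds \emph{unconditionally}, with no tree hypothesis; this is \cite[Theorem~3.1]{cps12}, and it is the reference the paper actually uses. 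Once you have this, the characterization is immediate: $l(F,V) > 1$ if and only if some form violates $\lgp(1,1)$, and by \cite{cps12} such a form must be $2$-dimensional, which by \cite[Theorem~9.11]{hhk15} happens exactly when the reduction graph is not a tree.

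The same \cite{cps12} result also makes your upper-bound argument unnecessarily elaborate. The paper observes that $i_W(q_v) \geq 2$ forces $\dim q \geq 4 \geq 3$, and then \cite[Theorem~3.1]{cps12} gives global isotropy in one step --- no need to separate $\dim q = 4$ (via hyperbolicity) from $\dim q \geq 5$, and no need for the $I^2$-neighbor fallback (which, incidentally, does not yield $\lgp(2,1)$ uniformly: for a $6$-dimensional form not in $I^2(F)$, Proposition~\ref{I^n neighbors and lgp} with $n=2$, $r=2$ gives only $\lgp(3,2)$).
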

\begin{proof}
We first show that $l(F, V) \leq 2$. To prove this claim, we must show that all quadratic forms over $F$ satisfy $\lgp(2, 1)$ with respect to $V$. To that end, let $q$ be any quadratic form over~$F$ such that $i_W(q_v) \geq 2$ for all $v \in V$. This assumption on the Witt index of $q_v$ implies that $\dim q \geq 4$. Moreover, since $i_W(q_v) \geq 2$ for all $v \in V$, $q$ is isotropic over $F_v$ for all $v \in V$. Because $\dim q \geq 3$ and $q$ is isotropic over $F_v$ for all $v \in V$, by \cite[Theorem~3.1]{cps12} we conclude that $q$ is isotropic over $F$, i.e, $i_W(q) \geq 1$. Therefore $q$ satisfies $\lgp(2, 1)$ with respect to $V$.

From the above paragraph, we see that $l(F,V) = 2$ if and only if $l(F, V) \ne 1$, i.e., there is a quadratic form over $F$ that violates $\lgp(1,1)$ with respect to $V$. By \cite[Theorem~3.1]{cps12}, all quadratic forms over $F$ of dimension $\geq 3$ satisfy $\lgp(1,1)$ with respect to $V$. So $l(F,V) = 2$ if and only if there is a two-dimensional quadratic form over $F$ that violates $\lgp(1,1)$ with respect to $V$. By \cite[Theorem~9.11]{hhk15}, such a two-dimensional counterexample exists if and only if the reduction graph of a regular model of $F$ is not a tree.
\end{proof}

\section{Refining the $m$-invariant}
\label{refined m-inv}
\subsection{Definition and general results}
\label{refined m generalities}
Let $k$ be a field of characteristic $\ne 2$. The \textit{$m$-invariant} of~$k$, denoted by $m(k)$, is defined to be the minimal dimension of an anisotropic universal quadratic form over~$k$ \cite{m-inv}. Using the Witt index and the First Representation Theorem \cite[Corollary~I.3.5]{lam}, we can rewrite $m(k)$ as
\[
	m(k) = \inf\left\{\dim q \geq 1 \middle\vert \begin{array}{c} q \text{ a quadratic form over $k$ such that } i_W(q) < 1 \text{ and} \\
	i_W(q \perp \sigma_1) \geq 1 \text{ for all one-dimensional quadratic forms $\sigma_1$ over $k$} \end{array}\right\}.
\]
This motivates the following refined version of the $m$-invariant of $k$.
\begin{defin}
\label{refined m}
Let $k$ be a field of characteristic $\ne 2$. For positive integers $i, j \geq 1$, let
\[
	m_{i,j}(k) = \inf\left\{\dim q \geq 1 \middle \vert \begin{array}{c} q \text{ a quadratic form over $k$ such that } i_W(q) < j \text{ and} \\
	i_W(q \perp \sigma_i) \geq j \text{ for all $i$-dimensional quadratic forms $\sigma_i$ over $k$}\end{array} \right\},
\]
where the infimum of the empty set is $\infty$.
\end{defin}
In particular, $m_{1,1}(k) = m(k)$. Next, we define terminology that we will use throughout Section~\ref{refined m-inv}.

\begin{defin}
    \label{(i,j)-realizing form}
Let $k$ be a field of characteristic $\ne 2$, and let $i,j \geq 1$ be any positive integers. We call a regular quadratic form $q$ of dimension $\geq 1$ over $k$ an $(i,j)$-\textit{realizing form} if $i_W(q) < j$ and $i_W(q \perp \sigma_i) \geq j$ for all $i$-dimensional quadratic forms $\sigma_i$ over $k$.
\end{defin}
For example, any anisotropic universal quadratic form over $k$ is a $(1,1)$-realizing form. Furthermore, with this definition we can rephrase $m_{i,j}(k)$ as the minimal dimension of an $(i,j)$-realizing form over $k$. In particular, $m_{i,j}(k) < \infty$ if and only if an $(i,j)$-realizing form exists over $k$.

We will begin this subsection by proving some initial results about how the invariants $m_{i,j}(k)$ relate to one another as we increase $i$ or $j$. We will then show, despite the fact that finding precise values of $m_{i,j}(k)$ is a challenging problem in general, that there are natural upper and lower bounds for~$m_{i,j}(k)$ given in terms of $u(k)$ and $m(k)$ (see, e.g., Theorem \ref{upper and lower bounds}). While finding these lower bounds, we will also prove sharper results about how the invariants $m_{i,j}(k)$ relate to one another as we decrease $i$ or $j$ (see, e.g., Proposition \ref{decreasing i in m_{i,j}} and Lemma \ref{decreasing j in m_{1,j}}).

\begin{lemma}
\label{initial inequalities}
Let $k$ be any field of characteristic $\ne 2$, and let $i, j \geq 1$ be any positive integers.
\begin{enumerate}[label=(\alph*)]
	\item For any integer $i' \geq i$, if $m_{i,j}(k) < \infty$, then $m_{i', j}(k) \leq m_{i,j}(k)$.
	
	\item For any integer $j' \geq j$, if $m_{i,j'}(k) < \infty$, then $m_{i, j'}(k) \geq m_{i,j}(k)$.
\end{enumerate}
\end{lemma}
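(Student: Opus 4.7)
The plan is to start from a minimum-dimensional realizing form — which exists whenever the relevant $m_{i,j}(k)$ is finite, since dimensions lie in the positive integers — and either reuse it directly or extract a suitable subform.

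For part (a), let $q$ be an $(i,j)$-realizing form with $\dim q = m_{i,j}(k)$. The goal is to show that $q$ itself is already $(i',j)$-realizing; the condition $i_W(q) < j$ is then automatic. For the other condition, given any $i'$-dimensional form $\sigma_{i'}$ over $k$, diagonalize and split off an $i$-dimensional subform to write $\sigma_{i'} \simeq \sigma_i \perp \tau$. The Witt decomposition of $q \perp \sigma_i$ contains at least $j$ copies of $\HH$, and these persist as a subform of $q \perp \sigma_{i'} \simeq (q \perp \sigma_i) \perp \tau$, so $i_W(q \perp \sigma_{i'}) \geq i_W(q \perp \sigma_i) \geq j$.

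For part (b), let $q$ be an $(i,j')$-realizing form with $\dim q = m_{i,j'}(k)$, and split into cases on $i_W(q)$. If $i_W(q) < j$, then $q$ is already $(i,j)$-realizing, since $i_W(q \perp \sigma_i) \geq j' \geq j$ for all $i$-dimensional $\sigma_i$. Otherwise $j \leq i_W(q) < j'$, in which case I split off $t := i_W(q) - j + 1 \geq 1$ hyperbolic planes to write $q \simeq t\HH \perp q'$ with $i_W(q') = j - 1 < j$. Then $\dim q' = \dim q - 2t < \dim q$ and
\[
i_W(q' \perp \sigma_i) = i_W(q \perp \sigma_i) - t \geq j' - t = (j' - i_W(q)) + (j-1) \geq j,
\]
so $q'$ is $(i,j)$-realizing provided $\dim q' \geq 1$.

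The only real obstacle is the degenerate subcase of part (b) in which $j = 1$ and $q \simeq s\HH$ is hyperbolic, making $\dim q' = 0$. But then $q$ being $(i,j')$-realizing with $s < j'$ forces $i_W(s\HH \perp \sigma_i) = s + i_W(\sigma_i) \geq j'$ for every $i$-dimensional $\sigma_i$, so $i_W(\sigma_i) \geq 1$ for all such $\sigma_i$, which implies $u(k) < i$. Lemma \ref{Witt index of large dimensional forms} then makes every one-dimensional form over $k$ an $(i,1)$-realizing form, giving $m_{i,1}(k) = 1 \leq \dim q = m_{i,j'}(k)$ and closing this last case.
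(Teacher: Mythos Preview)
Your proof is correct and follows essentially the same approach as the paper: in (a) you show the minimal $(i,j)$-realizing form is already $(i',j)$-realizing, and in (b) you split off hyperbolic planes from a minimal $(i,j')$-realizing form until the Witt index drops below $j$. Your parameter $t = i_W(q) - j + 1$ is exactly the paper's $\ell + 1$.

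The one place where your argument is more careful than the paper's is the degenerate subcase of (b) with $j = 1$ and $q$ hyperbolic, where the residual form $q'$ has dimension zero and so cannot literally be an $(i,1)$-realizing form. The paper's proof passes over this silently, whereas you observe that in this situation every $i$-dimensional form is isotropic, whence $u(k) < i$ and $m_{i,1}(k) = 1$. That patch is correct (indeed, once $u(k) < i$ you don't even need Lemma~\ref{Witt index of large dimensional forms}: any $\sigma_i$ is already isotropic, so $\langle a \rangle \perp \sigma_i$ is too). This is a genuine, if small, tightening of the argument rather than a different method.
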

\begin{proof}
(a) Let $q$ be an $m_{i,j}(k)$-dimensional $(i,j)$-realizing form over $k$. Then letting $\sigma_{i'}$ be any $i'$-dimensional quadratic form over $k$ and taking any $i$-dimensional subform $\sigma_i$ of $\sigma_{i'}$, we have $i_W\left(q \perp \sigma_{i'}\right) \geq i_W\left(q \perp \sigma_i\right) \geq j$. Since $i_W(q) < j$, we conclude $m_{i',j}(k) \leq \dim q = m_{i,j}(k)$.
	
(b) If $j' = j$, then there is nothing to prove, so suppose $j' = j + s$ for some $s > 0$. Let~$q$ be an $m_{i,j'}(k)$-dimensional $(i,j')$-realizing form over $k$. If $i_W(q) < j$, then we are done since $m_{i,j'}(k) = \dim q \geq m_{i,j}(k)$ by the definition of $m_{i,j}(k)$. Otherwise, $j \leq i_W(q) < j'$ and we have $i_W(q) = j + \ell \geq \ell + 1$ for some $0 \leq \ell \leq s - 1$. So we can write $q \simeq \left(\ell + 1\right) \HH \perp q'$ for some form $q'$ over~$k$ with $i_W(q') = j - 1 < j$. Moreover, for all $i$-dimensional forms $\sigma_i$ over $k$ we have
	\[
		i_W(q \perp \sigma_i) = i_W\left((\ell+1)\HH \perp q' \perp \sigma_i\right) \geq j' = j + s.
	\]
	This implies $i_W\left(q' \perp \sigma_i\right) \geq j + s - (\ell + 1) \geq j$, where this last inequality holds because $\ell \leq s - 1$. Therefore $q'$ is an $(i,j)$-realizing form over $k$, hence $m_{i,j}(k) \leq \dim q' \leq \dim q = m_{i,j'}(k)$.
\end{proof}
We will now prove that a natural upper bound exists for $m_{i,j}(k)$ in terms of $u(k)$, $i$, and $j$.
\begin{lemma}
\label{upper bound on m_{i,j}}
Let $k$ be a field of characteristic $\ne 2$ with $u(k) < \infty$, and let $i,j \geq 1$ be any positive integers. Then $m_{i,j}(k) \leq \max\{1, u(k) + 2j - 1 - i\}$.
\end{lemma}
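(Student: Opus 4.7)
The plan is to handle the bound via a case split on whether $u(k) + 2j - 1 - i$ is at most $1$ or at least $2$, exhibiting an explicit $(i,j)$-realizing form of the asserted dimension in each case. In both cases, the crux is Lemma \ref{Witt index of large dimensional forms}: if an $N$-dimensional form $q$ satisfies $N + i \geq u(k) + 2j - 1$, then $i_W(q \perp \sigma_i) \geq j$ automatically for every $i$-dimensional $\sigma_i$, so the task reduces to finding $q$ of the right dimension with $i_W(q) < j$.

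First, if $u(k) + 2j - 1 - i \leq 1$, then the claimed bound is $m_{i,j}(k) \leq 1$. I would take $q = \langle 1 \rangle$. Trivially $i_W(q) = 0 < j$, and for every $i$-dimensional $\sigma_i$ the sum $q \perp \sigma_i$ has dimension $i+1 \geq u(k) + 2j - 1$, so Lemma \ref{Witt index of large dimensional forms} gives $i_W(q \perp \sigma_i) \geq j$. Hence $q$ is $(i,j)$-realizing and $m_{i,j}(k) \leq 1$.

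Otherwise, set $N := u(k) + 2j - 1 - i \geq 2$. For any $N$-dimensional $q$, the dimension count $N + i = u(k) + 2j - 1$ combined with Lemma \ref{Witt index of large dimensional forms} shows that $i_W(q \perp \sigma_i) \geq j$ for every $i$-dimensional $\sigma_i$. It therefore suffices to produce $q$ of dimension $N$ with $i_W(q) < j$, which I would do via a further split on whether $i \leq u(k)$. If $i \leq u(k)$, I would let $\varphi$ be an anisotropic form of dimension $u(k) - i + 1 \geq 1$, obtained as a subform of any anisotropic $u(k)$-dimensional form (which exists by definition of $u(k)$), and set $q := \varphi \perp (j-1)\HH$; this has dimension $N$ and Witt index exactly $j-1$. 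If instead $i > u(k)$, then $N \leq 2j - 2$, so one can take $q := \lfloor N/2 \rfloor \HH$ when $N$ is even and $q := \lfloor N/2 \rfloor \HH \perp \langle 1 \rangle$ when $N$ is odd; this yields $i_W(q) = \lfloor N/2 \rfloor \leq j - 1 < j$.

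I do not expect a substantive obstacle: the argument is essentially a bookkeeping exercise built on Lemma \ref{Witt index of large dimensional forms}. The only subtle point is the regime $i > u(k)$, where the natural construction $\varphi \perp (j-1)\HH$ fails because $u(k) - i + 1 \leq 0$ prevents the existence of such a $\varphi$; however, in this regime $N$ is automatically small enough relative to $j$ that a purely hyperbolic (plus at most one unit) construction suffices.
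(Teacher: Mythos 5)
Your proof is correct and follows essentially the same strategy as the paper's: exhibit an explicit $(i,j)$-realizing form of the desired dimension and reduce the verification to Lemma \ref{Witt index of large dimensional forms}. The only differences are cosmetic bookkeeping — the paper splits according to whether $u(k)+2j-1-i$ falls below $1$, between $1$ and $u(k)$, or above $u(k)$, using a fully anisotropic witness in the middle range and padding an anisotropic $u(k)$-dimensional form with hyperbolic planes in the last, whereas you split on $i \le u(k)$ versus $i > u(k)$ and uniformly pad a suitably sized anisotropic form (or just hyperbolic planes plus a unit) with $(j-1)\HH$; both case analyses produce valid witness forms.
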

\begin{proof}
If $1 \geq u(k) + 2j - 1 - i$, then we need to show that $m_{i,j}(k) \leq 1$. Take any $a \in k^{\times}$. Then $i_W(\langle a \rangle) = 0 < j$, and for all $i$-dimensional forms $\sigma_i$ over $k$ we have
\[
	\dim \left(\langle a \rangle \perp \sigma_i \right) = i + 1 \geq u(k) + 2j - 1.
\]
Therefore $i_W(\langle a \rangle \perp \sigma_i) \geq j$ by Lemma \ref{Witt index of large dimensional forms}. This shows that $m_{i,j}(k) \leq 1$, as desired.

If $1 \leq u(k) + 2j - 1 - i \leq u(k)$, then let $q$ be any anisotropic quadratic form over $k$ of dimension $u(k) + 2j - 1 - i$. Such a form $q$ exists by the definition of $u(k)$. Then $i_W(q) = 0 < j$, and for all $i$-dimensional forms $\sigma_i$ over $k$ we have $\dim (q \perp \sigma_i) = u(k) + 2j - 1$. This implies $i_W(q \perp \sigma_i) \geq j$ by Lemma \ref{Witt index of large dimensional forms}. Thus $m_{i,j}(k) \leq \dim q = u(k) + 2j - 1 - i$.

Finally, suppose $u(k) + 2j - 1 - i > u(k)$, i.e., $2j - 1 > i$. Let $q$ be any anisotropic quadratic form over $k$ of dimension $u(k)$. Define the $(u(k) + 2j - 1 - i)$-dimensional quadratic form $\varphi_{i,j}$ over~$k$ as follows:
\[
	\varphi_{i,j} = \begin{cases}
		q \perp \frac{2j - 1 - i}{2} \HH &\text{ if $i$ is odd}, \\
		q \perp \langle 1 \rangle \perp \frac{2j - 2 - i}{2} \HH &\text{ if $i$ is even}.
	\end{cases}
\]
Then $i_W(\varphi_{i,j}) = j - \lceil \frac{i}{2} \rceil < j$. Furthermore, $\dim(\varphi_{i,j} \perp \sigma_i) = u(k) + 2j - 1$ for all $i$-dimensional forms~$\sigma_i$ over~$k$, hence $i_W\left(\varphi_{i,j} \perp \sigma_i\right) \geq j$ by Lemma \ref{Witt index of large dimensional forms}. Thus $m_{i,j}(k) \leq \dim \varphi_{i,j} = u(k) + 2j - 1 - i$.

We have found an $(i,j)$-realizing form over $k$ whose dimension is $\max\{1, u(k) + 2j - 1 - i\}$ in all cases for $i$ and $j$, so the proof is complete.
\end{proof}

We now shift our focus to finding a similar lower bound for $m_{i,j}(k)$ in terms of $m(k)$, $i$, and $j$.

\begin{lemma}
\label{add on a form}
Let $k$ be any field of characteristic $\ne 2$, and let $i, j, r$ be integers such that $i \geq 2$, $j \geq 1$, and $1 \leq r < i$. Let $q$ be a quadratic form over $k$ such that $i_W(q) < j$ and $\dim q \leq m_{i-r, j}(k) - r$. Then there exists an $r$-dimensional quadratic form $\sigma_r$ over $k$ such that $i_W(q \perp \sigma_r) < j$.
\end{lemma}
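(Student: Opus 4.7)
My plan is to induct on $r \geq 1$, handling the base case $r = 1$ by contrapositive and then bootstrapping the inductive step via a single extra application of the base case.

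For the base case, I would suppose for contradiction that no $\sigma_1 = \langle a \rangle$ satisfies $i_W(q \perp \sigma_1) < j$, so that $i_W(q \perp \langle a \rangle) \geq j$ for every $a \in k^\times$. I would then argue that $q$ itself is an $(i-1,j)$-realizing form: for any $(i-1)$-dimensional form $\tau \simeq \langle a_1, \ldots, a_{i-1} \rangle$, writing $q \perp \langle a_1 \rangle \simeq j\HH \perp \psi$ gives $q \perp \tau \simeq j\HH \perp \psi \perp \langle a_2, \ldots, a_{i-1} \rangle$, so $i_W(q \perp \tau) \geq j$. Since $i \geq 2$ forces $i - 1 \geq 1$ and $i_W(q) < j$ is part of the hypothesis, this produces $m_{i-1,j}(k) \leq \dim q$, contradicting $\dim q \leq m_{i-1,j}(k) - 1$.

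For the inductive step $r \geq 2$, I first apply the inductive hypothesis to $q$ with parameters $(i-1, r-1)$, which is valid because $r \geq 2$ together with $r < i$ force $i \geq 3$, and because the required bound $\dim q \leq m_{(i-1)-(r-1),j}(k) - (r-1) = m_{i-r,j}(k) - (r-1)$ follows from $\dim q \leq m_{i-r,j}(k) - r$. This produces an $(r-1)$-dimensional $\sigma_{r-1}$ with $i_W(q \perp \sigma_{r-1}) < j$. Next I apply the base case to $q' := q \perp \sigma_{r-1}$ with parameter $i'' = i - r + 1 \geq 2$: the required bound $\dim q' \leq m_{i-r,j}(k) - 1$ follows because $\dim q' = \dim q + (r-1) \leq m_{i-r,j}(k) - 1$. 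This yields a $1$-dimensional $\sigma_1$ with $i_W(q' \perp \sigma_1) < j$, and setting $\sigma_r := \sigma_{r-1} \perp \sigma_1$ delivers the desired $r$-dimensional form.

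The main obstacle I anticipate is parameter bookkeeping. Lemma~\ref{initial inequalities}(a) gives $m_{i-r+1,j}(k) \leq m_{i-r,j}(k)$, which points the wrong way for a naive reduction; the trick is to shift both $i$ and $r$ simultaneously by $1$ when invoking the inductive hypothesis, so that the subscript $i - r$ is preserved and the single dimension inequality $\dim q \leq m_{i-r,j}(k) - r$ is strong enough to feed cleanly through both the inductive call and the subsequent application of the base case.
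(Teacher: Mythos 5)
Your proof is correct, and it is essentially the same argument as the paper's, just packaged as a formal induction on $r$. The paper runs the same one-entry-at-a-time extension directly: since $\dim q < m_{i-r,j}(k)$ and $i_W(q) < j$, the form $q$ cannot be $(i-r,j)$-realizing, so some $(i-r)$-dimensional complementary form $\sigma_{i-r}$ satisfies $i_W(q\perp\sigma_{i-r})<j$; taking any single entry $a_1$ of $\sigma_{i-r}$ and using that the Witt index of a subform is bounded by that of the ambient form gives $i_W(q\perp\langle a_1\rangle)<j$, and one repeats this $r$ times, the dimension budget $\dim q \leq m_{i-r,j}(k)-r$ being exactly what keeps the argument going. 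Your base case reaches the same point by contrapositive (if every $\langle a\rangle$ pushes the Witt index to $\geq j$ then $q$ is $(i-1,j)$-realizing, contradicting the dimension bound) rather than by extracting a subform from a failing complementary form, and your inductive step's simultaneous shift $(i,r)\mapsto(i-1,r-1)$ correctly preserves the subscript $i-r$ so the single hypothesis threads through cleanly. Both proofs rest on the same two facts — the definitional consequence of $\dim q < m_{i-r,j}(k)$, and monotonicity of the Witt index under subforms — so there is no real divergence of method, only of bookkeeping style.
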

\begin{proof}
Since $i_W(q) < j$ and $\dim q < m_{i-r,j}(k)$, there exists an $(i-r)$-dimensional form $\sigma_{i-r}$ over~$k$ such that $i_W(q \perp \sigma_{i-r}) < j$. Because $i - r \geq 1$, we can take any entry $a_1$ of $\sigma_{i-r}$ and have $i_W(q \perp \langle a_1 \rangle) < j$. If $r = 1$, then we are done, letting $\sigma_1 = \langle a_1 \rangle$.

If $r \geq 2$, then consider the form $q \perp \langle a_1 \rangle$ over $k$. We have
\[
	i_W(q \perp \langle a_1 \rangle) < j \text{ and } \dim (q \perp \langle a_1 \rangle) \leq m_{i-r,j}(k) - r + 1 < m_{i-r, j}(k).
\]
By the same reasoning as in the previous paragraph, we can find an element $a_2 \in k^{\times}$ such that $i_W(q \perp \langle a_1, a_2 \rangle) < j$. By repeating this process, we can find $r$ elements $a_1, \ldots, a_r \in k^{\times}$ such that $i_W(q \perp \langle a_1, \ldots, a_r \rangle) < j$. Letting $\sigma_r = \langle a_1, \ldots, a_r \rangle$ completes the proof.
\end{proof}
\begin{prop}
\label{decreasing i in m_{i,j}}
Let $k$ be any field of characteristic $\ne 2$, and let $i, j \geq 1$ be any positive integers. For any integer $r$ such that $0 \leq r < i$, if $m_{i,j}(k) < \infty$, then $m_{i,j}(k) \geq m_{i-r,j}(k) - r$.
\end{prop}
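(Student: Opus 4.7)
The plan is to prove the inequality by constructing, from an $(i,j)$-realizing form $q$ of minimal dimension $m_{i,j}(k)$, an $(i-r,j)$-realizing form of dimension $m_{i,j}(k)+r$, which will force $m_{i-r,j}(k) \leq m_{i,j}(k)+r$. The case $r=0$ is immediate, so I would assume $r \geq 1$ (and hence $i \geq 2$) throughout.

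First I would fix an $m_{i,j}(k)$-dimensional $(i,j)$-realizing form $q$ over $k$, which exists by the assumption $m_{i,j}(k) < \infty$. I want to enlarge $q$ to a form of the shape $q \perp \sigma_r$, with $\sigma_r$ an $r$-dimensional form chosen so that $i_W(q \perp \sigma_r) < j$. The key observation is that for \emph{any} such $\sigma_r$, the form $q \perp \sigma_r$ will be $(i-r,j)$-realizing: for every $(i-r)$-dimensional form $\sigma_{i-r}$ over $k$, the orthogonal sum $\sigma_r \perp \sigma_{i-r}$ is $i$-dimensional, so
\[
    i_W\bigl((q \perp \sigma_r) \perp \sigma_{i-r}\bigr) = i_W\bigl(q \perp (\sigma_r \perp \sigma_{i-r})\bigr) \geq j
\]
by the $(i,j)$-realizing property of $q$. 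The only remaining requirement is $i_W(q \perp \sigma_r) < j$, and this is exactly the output of Lemma \ref{add on a form}, provided its hypothesis $\dim q \leq m_{i-r,j}(k) - r$ holds.

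The clean way to handle the case distinction is to split on whether that hypothesis is satisfied. If $m_{i,j}(k) = \dim q > m_{i-r,j}(k) - r$, then the desired inequality $m_{i,j}(k) \geq m_{i-r,j}(k) - r$ holds trivially. Otherwise $\dim q \leq m_{i-r,j}(k) - r$, so Lemma \ref{add on a form} produces an $r$-dimensional form $\sigma_r$ with $i_W(q \perp \sigma_r) < j$. The argument above then shows $q \perp \sigma_r$ is $(i-r,j)$-realizing, yielding
\[
    m_{i-r,j}(k) \leq \dim(q \perp \sigma_r) = m_{i,j}(k) + r,
\]
which rearranges to the claimed inequality.

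There is no serious obstacle here: the result is essentially a packaging of Lemma \ref{add on a form} together with the easy stability observation that adjoining the complementary $\sigma_r$ preserves the realizing property when one passes from $i$ to $i-r$. The only delicate point is the case split to ensure Lemma \ref{add on a form}'s dimensional hypothesis is met, but as shown this case analysis is trivial since the failing case itself yields the conclusion.
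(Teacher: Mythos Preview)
Your proof is correct and essentially the same as the paper's: both invoke Lemma~\ref{add on a form} to produce an $r$-dimensional $\sigma_r$ with $i_W(q \perp \sigma_r) < j$, and both pivot on the observation that $q$ being $(i,j)$-realizing forces $i_W(q \perp \sigma_r \perp \sigma_{i-r}) \geq j$ for every $(i-r)$-dimensional $\sigma_{i-r}$. The only cosmetic difference is that the paper argues by contradiction (assuming $m_{i,j}(k) \leq m_{i-r,j}(k) - r - 1$ and deriving a witness $\sigma_{i-r}$ violating the $(i,j)$-realizing property), whereas you argue directly by exhibiting $q \perp \sigma_r$ as an $(i-r,j)$-realizing form; the underlying content is identical.
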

\begin{proof}
This is trivial for $r = 0$ since both sides equal $m_{i,j}(k)$, so we may assume $r \geq 1$, which implies that $i \geq 2$. By contradiction, assume we can find integers $i,j,r$ with $i \geq 2$, $j \geq 1$, and $1 \leq r < i$ such that $m_{i,j}(k) \leq m_{i-r,j}(k) - r - 1$. Let $q$ be an $m_{i,j}(k)$-dimensional $(i,j)$-realizing form over~$k$. Since $i_W(q) < j$ and $\dim q \leq m_{i-r,j}(k) - r$, by Lemma~\ref{add on a form} there is an $r$-dimensional quadratic form $\sigma_r$ over $k$ such that $i_W(q \perp \sigma_r) < j$. Now, because $\dim q = m_{i,j}(k) \leq m_{i-r,j}(k) - r - 1$, we have $\dim (q \perp \sigma_r) \leq m_{i-r,j}(k) - 1 < m_{i-r,j}(k)$. Moreover, since $i_W(q \perp \sigma_r) < j$, this implies that there exists an $(i-r)$-dimensional form $\sigma_{i-r}$ over $k$ such that $i_W(q \perp \sigma_r \perp \sigma_{i-r}) < j$. But $\dim (\sigma_r \perp \sigma_{i-r}) = i$, so this is a contradiction of our choice of $q$, and the proof is complete.
\end{proof}
Letting $r = i - 1$ in Proposition \ref{decreasing i in m_{i,j}}, we have
\begin{cor}
\label{decreasing i to 1}
Let $k$ be any field of characteristic $\ne 2$, and let $i,j \geq 1$ be any positive integers. If $m_{i,j}(k) < \infty$, then $m_{i,j}(k) \geq m_{1, j}(k) - i + 1$.
\end{cor}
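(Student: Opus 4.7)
The plan is to recognize that this corollary is an immediate specialization of Proposition~\ref{decreasing i in m_{i,j}}, obtained by choosing a particular value of the parameter~$r$. There is essentially no new content to prove; the whole task is to verify that the specialization is valid and to check the edge case separately.

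First I would treat the trivial case $i = 1$. Here the claimed inequality becomes $m_{1,j}(k) \geq m_{1,j}(k) - 0$, which holds automatically, so nothing more is required.

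For $i \geq 2$, I would apply Proposition~\ref{decreasing i in m_{i,j}} with the specific choice $r = i - 1$. This choice satisfies the hypothesis $0 \leq r < i$ of the proposition, so under the assumption $m_{i,j}(k) < \infty$ the proposition yields
\[
    m_{i,j}(k) \;\geq\; m_{i - (i-1),\,j}(k) - (i-1) \;=\; m_{1,j}(k) - i + 1,
\]
which is exactly the desired inequality.

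Since the derivation is a single substitution, I do not anticipate any genuine obstacle; the main thing to be careful about is just to note the range of~$r$ in Proposition~\ref{decreasing i in m_{i,j}} (strict inequality $r < i$) and to handle $i = 1$ as a separate trivial case, as described above.
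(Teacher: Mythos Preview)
Your proposal is correct and is essentially identical to the paper's own proof, which simply says ``Letting $r = i - 1$ in Proposition~\ref{decreasing i in m_{i,j}}.'' Note that your separate treatment of $i = 1$ is unnecessary, since Proposition~\ref{decreasing i in m_{i,j}} allows $r = 0$, so the choice $r = i - 1$ is already valid when $i = 1$.
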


Proposition \ref{decreasing i in m_{i,j}} shows how much $m_{i,j}(k)$ can increase by decreasing $i$, and the next lemma shows how much $m_{1,j}(k)$ can change when we decrease $j$.
\begin{lemma}
\label{decreasing j in m_{1,j}}
Let $k$ be any field of characteristic $\ne 2$, and let $j \geq 1$ be any positive integer. If $m_{1,j}(k) < \infty$, then $m_{1,j}(k) \geq m_{1,1}(k) + 2j - 2$.
\end{lemma}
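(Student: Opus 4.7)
\medskip

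The plan is to show that any $(1,j)$-realizing form $q$ over $k$ is forced to have Witt index \emph{exactly} $j-1$, and that its anisotropic part is then automatically a $(1,1)$-realizing form. Since $m_{1,j}(k) < \infty$ by hypothesis, I would fix a $(1,j)$-realizing form $q$ of minimal dimension $m_{1,j}(k)$, write $t = i_W(q) < j$, and aim to conclude that $\dim q_{an} \geq m_{1,1}(k)$, so that $\dim q = 2(j-1) + \dim q_{an} \geq m_{1,1}(k) + 2j - 2$.

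The key step is to show $t = j-1$. Pick any $a \in k^{\times}$ represented by $q$ (such $a$ exists because $q$ is regular of dimension $\geq 1$), and decompose $q \simeq \langle a \rangle \perp q'$. Then
\[
    q \perp \langle -a \rangle \simeq \HH \perp q',
\]
and the $(1,j)$-realizing hypothesis applied to $\sigma_1 = \langle -a \rangle$ gives $j \leq i_W(q \perp \langle -a\rangle) = 1 + i_W(q')$, whence $i_W(q') \geq j-1$. Writing $q' \simeq (j-1)\HH \perp \tilde{q}'$ and substituting into $q \simeq \langle a \rangle \perp q'$ yields
\[
    t = i_W(q) = (j-1) + i_W\bigl(\langle a \rangle \perp \tilde{q}'\bigr) \geq j-1,
\]
which, combined with $t < j$, forces $t = j-1$.

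With $t = j-1$, write $q \simeq (j-1)\HH \perp q_{an}$. For every $a \in k^{\times}$, the hypothesis $i_W(q \perp \langle a \rangle) \geq j$ now translates to $i_W(q_{an} \perp \langle a \rangle) \geq 1$, so by the First Representation Theorem $q_{an}$ is universal. It is also anisotropic by construction, and it is non-trivial (otherwise $q \simeq (j-1)\HH$ would give $i_W(q \perp \langle a \rangle) = j-1 < j$, a contradiction). Hence $q_{an}$ is a $(1,1)$-realizing form, so $\dim q_{an} \geq m_{1,1}(k)$, and the desired inequality follows.

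The only genuinely non-routine point is showing that $t \geq j-1$ (not merely $t \leq j-1$); once that is in hand, the rest is a direct consequence of the Witt decomposition and the First Representation Theorem. I expect this lower bound on $t$ to be the main obstacle, but it follows cleanly from the existence of a representation $q \simeq \langle a \rangle \perp q'$ and the elementary identity $i_W(\HH \perp q') = 1 + i_W(q')$.
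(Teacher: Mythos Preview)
Your argument is correct and follows the same overall strategy as the paper: show that a minimal $(1,j)$-realizing form $q$ has $i_W(q)=j-1$, then observe that its anisotropic part $q_{an}$ is an anisotropic universal form, whence $\dim q_{an}\geq m_{1,1}(k)$.

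The only substantive difference is in how you establish $i_W(q)\geq j-1$, which you flag as the ``main obstacle.'' You do it by choosing a represented value $a$, splitting off $\langle a\rangle$, and tracking the Witt index through $q\perp\langle -a\rangle\simeq\HH\perp q'$. The paper instead dispatches this in one line using Lemma~\ref{Witt index of sum}: for any one-dimensional $\sigma_1$ one has $j\leq i_W(q\perp\sigma_1)\leq i_W(q)+1$, so $i_W(q)\geq j-1$ immediately. Your argument is in effect a bare-hands reproof of that special case of Lemma~\ref{Witt index of sum}; it works, but the paper's route is shorter and avoids the auxiliary decomposition $q'\simeq (j-1)\HH\perp\tilde q'$. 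Your extra care in checking $q_{an}\ne 0$ is a nice touch the paper leaves implicit.
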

\begin{proof}
Let $q$ be an $m_{1,j}(k)$-dimensional $(1,j)$-realizing form over $k$. Then for all 1-dimensional forms~$\sigma_1$ over $k$ we have $i_W(q \perp \sigma_1) \geq j$. Lemma \ref{Witt index of sum} then implies $1 + i_W(q) \geq i_W(q \perp \sigma_1) \geq j$, hence $i_W(q) \geq j - 1$. By assumption, $i_W(q) < j$, so $i_W(q) = j - 1$ and we can write $q \simeq (j-1)\HH \perp q_{an}$, where $q_{an}$ is anisotropic over $k$. So for all 1-dimensional forms $\sigma_1$ over $k$,
\[
	i_W(q \perp \sigma_1) = i_W\left((j-1)\HH \perp q_{an} \perp \sigma_1\right) \geq j.
\]
This implies that $i_W(q_{an} \perp \sigma_1) \geq 1$. Therefore $q_{an}$ is anisotropic and universal over $k$, hence $\dim q_{an} \geq m(k) = m_{1,1}(k)$. Thus $m_{1,j}(k) = \dim q = 2j - 2 + \dim q_{an} \geq 2j - 2 + m_{1,1}(k)$.
\end{proof}

We can now write a lower bound for $m_{i,j}(k)$ in terms of $m(k)$, $i$, and $j$.
\begin{cor}
\label{lower bound}
Let $k$ be any field of characteristic $\ne 2$, and let $i, j \geq 1$ be any positive integers. If $m_{i,j}(k) < \infty$, then $m_{i,j}(k) \geq \max\{1, m(k) + 2j - 1 - i\}$.
\end{cor}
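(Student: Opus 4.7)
The plan is to chain together the two immediately preceding results—Corollary~\ref{decreasing i to 1} and Lemma~\ref{decreasing j in m_{1,j}}—since between them they already absorb essentially all of the substantive content. The trivial half of the bound, $m_{i,j}(k) \geq 1$, is immediate from Definition~\ref{refined m}, where the infimum is taken over quadratic forms of dimension $\geq 1$.

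For the substantive half, $m_{i,j}(k) \geq m(k) + 2j - 1 - i$, I would first observe that our standing hypothesis $m_{i,j}(k) < \infty$ forces $m_{1,j}(k) < \infty$ as well. Indeed, Corollary~\ref{decreasing i to 1} asserts $m_{i,j}(k) \geq m_{1,j}(k) - i + 1$, and if $m_{1,j}(k)$ were infinite the right-hand side would also be infinite, contradicting the hypothesis. With $m_{1,j}(k)$ now known to be finite, Lemma~\ref{decreasing j in m_{1,j}} applies and yields $m_{1,j}(k) \geq m_{1,1}(k) + 2j - 2 = m(k) + 2j - 2$. Chaining the two inequalities gives
\[
    m_{i,j}(k) \;\geq\; m_{1,j}(k) - i + 1 \;\geq\; m(k) + 2j - 2 - i + 1 \;=\; m(k) + 2j - 1 - i,
\]
which, combined with $m_{i,j}(k) \geq 1$, delivers the asserted maximum.

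There is no real obstacle here; the corollary is essentially a bookkeeping consequence of the two preceding results. The only point requiring a moment's care is verifying the finiteness of $m_{1,j}(k)$ before invoking Lemma~\ref{decreasing j in m_{1,j}}, which is stated under that assumption—but as noted, this falls out formally from Corollary~\ref{decreasing i to 1} once one chases through what an infinite value of $m_{1,j}(k)$ would mean.
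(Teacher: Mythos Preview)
Your proof is correct and follows essentially the same route as the paper: both chain Corollary~\ref{decreasing i to 1} with Lemma~\ref{decreasing j in m_{1,j}} to obtain $m_{i,j}(k) \geq m_{1,j}(k) - i + 1 \geq m(k) + 2j - 1 - i$, and handle the trivial bound $m_{i,j}(k) \geq 1$ separately. You are slightly more explicit than the paper in verifying that $m_{1,j}(k) < \infty$ before invoking Lemma~\ref{decreasing j in m_{1,j}}; the paper leaves this implicit in the chained inequality.
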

\begin{proof}
If $m(k) + 2j - 1 - i < 1$, then the claim is automatic by definition since $m_{i,j}(k) \geq 1$. So suppose $m(k) + 2j - 1 - i \geq 1$, and recall that $m(k) = m_{1,1}(k)$. Then applying Corollary \ref{decreasing i to 1} and Lemma \ref{decreasing j in m_{1,j}}, we have
\[
	m_{i,j}(k) \geq m_{1,j}(k) - i + 1 \geq m_{1,1}(k) + 2j - 2 - i + 1 = m(k) + 2j - 1 - i.
\]
\end{proof}
\begin{theorem}
\label{upper and lower bounds}
Let $k$ be a field of characteristic $\ne 2$ with $u(k) < \infty$, and let $i,j \geq 1$ be any positive integers. Then
\[
	\max\{1, m(k) + 2j - 1 - i\} \leq m_{i,j}(k) \leq \max\{1, u(k) + 2j - 1 - i \}.
\]
Moreover, if either $u(k) + 2j - 1 - i \leq 1$ or $m(k) = u(k)$, then both inequalities above are equalities.
\end{theorem}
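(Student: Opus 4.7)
The plan is to observe that almost all the work has been done. The upper bound $m_{i,j}(k) \le \max\{1, u(k)+2j-1-i\}$ is exactly Lemma \ref{upper bound on m_{i,j}}, which in particular shows that $m_{i,j}(k) < \infty$ since $u(k) < \infty$ by hypothesis. Once finiteness is known, the lower bound $m_{i,j}(k) \ge \max\{1, m(k)+2j-1-i\}$ is exactly Corollary \ref{lower bound}. So the first sentence of the theorem is a direct consequence of combining these two results.

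For the moreover statement, I would handle the two hypotheses separately. If $m(k) = u(k)$, then $\max\{1, m(k)+2j-1-i\}$ and $\max\{1, u(k)+2j-1-i\}$ coincide as numbers, so the two inequalities squeeze $m_{i,j}(k)$ to that common value. If instead $u(k)+2j-1-i \le 1$, I would note that $m(k) \le u(k)$ (since any anisotropic universal form has dimension at most $u(k)$), so also $m(k)+2j-1-i \le 1$; thus the upper bound reads $\max\{1,u(k)+2j-1-i\}=1$ and the lower bound reads $\max\{1,m(k)+2j-1-i\}=1$. Combined with the trivial inequality $m_{i,j}(k) \ge 1$ from the definition of the invariant, this forces $m_{i,j}(k)=1$ and both displayed inequalities become equalities.

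Since both halves of the main display are immediate citations of prior results in the paper, there is no genuine obstacle; the only thing to be careful about is making sure finiteness of $m_{i,j}(k)$ is established before invoking the lower bound (Corollary \ref{lower bound} has $m_{i,j}(k) < \infty$ as a hypothesis). This is why the upper bound must be cited first. No further case analysis or form-theoretic construction is needed beyond what is already packaged into Lemma \ref{upper bound on m_{i,j}} and Corollary \ref{lower bound}.
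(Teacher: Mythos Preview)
Your proposal is correct and matches the paper's own proof essentially line for line: the paper also cites Lemma~\ref{upper bound on m_{i,j}} and Corollary~\ref{lower bound} for the two inequalities, and then uses $m(k)\le u(k)$ to argue that under either additional hypothesis the outer quantities coincide. Your explicit remark that the upper bound must be established first (to guarantee $m_{i,j}(k)<\infty$ before invoking Corollary~\ref{lower bound}) is a worthwhile clarification the paper leaves implicit.
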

\begin{proof}
The desired inequalities are shown in Lemma \ref{upper bound on m_{i,j}} and Corollary \ref{lower bound}.

To prove the second claim, we first observe that $m(k) + 2j - 1 - i \leq u(k) + 2j - 1 - i$ since $m(k) \leq u(k)$. So if either $u(k) + 2j - 1 - i \leq 1$ or $u(k) = m(k)$, then the first and third quantities above are equal, thus the inequalities become equalities.
\end{proof}

\begin{remark}
\label{completely determined}
Let $k$ be a field of characteristic $\ne 2$ such that $m(k) = u(k) < \infty$ (e.g., $k =$~$\C, \C(x)$, $\F_p, \F_p(x)$, $\F_p((t_1)) \cdots ((t_n)), \F_p((t_1)) \cdots ((t_n))(x)$, $\Q_p$, or $\Q_p(x)$ for $p \ne 2$ (see \cite{universal, m-inv})). Then by Theorem \ref{upper and lower bounds}, for any integers $i,j \geq 1$ such that $m(k) + 2j - 1 - i \geq 1$, we have $m_{i,j}(k) = m(k) + 2j - 1 - i$. This can be used to give precise expressions detailing how the invariants $m_{i,j}(k)$ change as we vary~$i$ and~$j$. Indeed, if $0 \leq r < i$, then
\[
	m_{i-r,j}(k) = m(k) + 2j - 1 - (i-r) = m(k) + 2j - 1 - i + r = m_{i,j}(k) + r.
\]
Moreover, for any $s \geq 0$,
\[
	m_{i,j+s}(k) = m(k) + 2(j+s) - 1 - i = m(k) + 2j - 1 - i + 2s = m_{i,j}(k) + 2s.
\]
In other words, if $m(k) = u(k) < \infty$, then the invariants $m_{i,j}(k)$ are completely determined by~$m(k)$,~$i$, and $j$.
\end{remark}

\begin{question}
\label{what if}
How closely do the invariants $m_{i,j}(k)$ follow the behavior seen in Remark \ref{completely determined} as we vary $i$ and $j$ if $m(k) < u(k)$?
\end{question}
We will first focus on answering Question \ref{what if} as we vary $i$. By Proposition \ref{decreasing i in m_{i,j}}, for any field $k$ of characteristic $\ne 2$ and integers $i, j, r$ such that $i \geq 2$, $j \geq 1$, and $1 \leq r < i$, if $m_{i,j}(k) < \infty$, then
\[
	m_{i,j}(k) \geq m_{i-r, j}(k) - r.
\]
However, as we will see, the only inequality holding in the opposite direction for a general field $k$ is that of Lemma~\ref{initial inequalities}(a), i.e., 
\[
	m_{i,j}(k) \leq m_{i-r,j}(k).
\]
Before seeing an example (Example \ref{refined m for sg field with loop}) that illustrates this point, we prove a lemma.
\begin{lemma}
\label{lower bound on m_{i,1} for small i}
Let $k$ be a field of characteristic $\ne 2$ with $2 \leq u(k) < \infty$. If $i$ is any integer such that $1 \leq i \leq u(k) - 1$, then $m_{i,1}(k) \geq 2$.
\end{lemma}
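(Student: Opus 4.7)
The plan is to argue by contradiction. Suppose $m_{i,1}(k) = 1$. Then there exists a one-dimensional $(i,1)$-realizing form over $k$, say $q = \langle a \rangle$ for some $a \in k^{\times}$; explicitly, $i_W(\langle a \rangle) = 0 < 1$ (which is automatic since $\langle a \rangle$ is one-dimensional and hence anisotropic), and $i_W(\langle a \rangle \perp \sigma_i) \geq 1$ for every $i$-dimensional quadratic form $\sigma_i$ over $k$. The goal is to exhibit an $i$-dimensional form $\sigma_i$ over $k$ such that $\langle a \rangle \perp \sigma_i$ is anisotropic, contradicting the universality condition.

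The key input is the hypothesis $1 \leq i \leq u(k) - 1$, i.e.\ $i + 1 \leq u(k)$. By the definition of $u(k)$, there exists an anisotropic $u(k)$-dimensional quadratic form over $k$, and any $(i+1)$-dimensional subform of it is also anisotropic. So we may pick an anisotropic form $\psi \simeq \langle d_1, \ldots, d_{i+1} \rangle$ of dimension $i+1$ over $k$.

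Now I would scale to produce an anisotropic form with $a$ as a leading entry: the form $(ad_1^{-1}) \cdot \psi \simeq \langle a, ad_1^{-1}d_2, \ldots, ad_1^{-1}d_{i+1} \rangle$ is isometric to a scalar multiple of $\psi$ and is therefore still anisotropic. Write this form as $\langle a, b_2, \ldots, b_{i+1} \rangle$ and set $\sigma_i = \langle b_2, \ldots, b_{i+1} \rangle$. Then $\langle a \rangle \perp \sigma_i \simeq \langle a, b_2, \ldots, b_{i+1} \rangle$ is anisotropic, so $i_W(\langle a \rangle \perp \sigma_i) = 0 < 1$, contradicting the assumption that $\langle a \rangle$ is an $(i,1)$-realizing form. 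Hence no one-dimensional form over $k$ is $(i,1)$-realizing, which forces $m_{i,1}(k) \geq 2$.

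There is no real obstacle here; the proof is essentially a one-step argument once one notices that an $(i+1)$-dimensional anisotropic form exists (using $i + 1 \leq u(k)$) and that such a form may be scaled to have any prescribed element of $k^{\times}$ as an entry.
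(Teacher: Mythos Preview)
Your proof is correct and follows the same overall strategy as the paper's: assume $m_{i,1}(k)=1$, take the putative one-dimensional realizing form $\langle a\rangle$, and produce an $i$-dimensional $\sigma_i$ with $\langle a\rangle\perp\sigma_i$ anisotropic. The technical difference is in how $a$ is forced to appear as an entry of an anisotropic form. The paper first reduces to the extremal case $i=u(k)-1$ via Lemma~\ref{initial inequalities}(a), then invokes the fact that any anisotropic form of dimension $u(k)$ is universal, so it represents $a$ and hence splits off $\langle a\rangle$. You instead work directly with an arbitrary $i\le u(k)-1$, pick any anisotropic $(i+1)$-dimensional form, and scale it so that $a$ becomes the first entry. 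Your route is slightly more elementary (it avoids the universality-of-maximal-forms step) and handles all $i$ at once without the preliminary reduction; the paper's route has the small aesthetic advantage of isolating the single critical case $i=u(k)-1$.
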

\begin{proof}
By Lemma \ref{initial inequalities}(a), we know that $m_{i', 1}(k) \leq m_{i,1}(k)$ for any $i' \geq i$. Therefore, to prove the lemma, it suffices to show that $m_{u(k) - 1, 1}(k) \geq 2$, which is equivalent to showing $m_{u(k) - 1, 1}(k) \ne 1$.

By contradiction, assume $m_{u(k) - 1, 1}(k) = 1$. That is, assume there is some element $a \in k^{\times}$ such that $i_W(\langle a \rangle \perp \sigma_{u(k) - 1}) \geq 1$ for all $(u(k) - 1)$-dimensional forms $\sigma_{u(k) - 1}$ over $k$. Let $q$ be any anisotropic quadratic form over $k$ with $\dim q = u(k)$. Then because $\dim q = u(k)$, the form $q$ is universal over $k$, hence represents $a$. We can therefore write $q \simeq \langle a \rangle \perp q'$ for some form $q'$ over $k$ with $\dim q' = u(k) - 1$ \cite[Theorem~I.2.3]{lam}. This, however, contradicts our choice of $a$ since $i_W(\langle a \rangle \perp q') = i_W(q) = 0 < 1$. So no such $a$ can exist, completing the proof.
\end{proof}

\begin{example}
\label{refined m for sg field with loop}
Let $p \ne 2$ be a prime, and let $F$ be a semi-global field over $\Q_p$ with a regular model whose reduction graph is not a tree (see the paragraph preceding Proposition \ref{l of a sg field} for terminology). Then we know that $m_{1,1}(F) = m(F) = 2$ \cite[Lemma~3.6]{universal}, and $u(F) = 8$ \cite[Corollary~4.15]{hhk09}. So for any integer $i$ such that $1 \leq i \leq 7$ we have
\[
	m_{i,1}(F) = m_{1,1}(F) = 2.
\]
Indeed, $m_{i,1}(F) \leq m_{1,1}(F) = 2$ by Lemma \ref{initial inequalities}(a), and $m_{i,1}(F) \geq 2$ by Lemma \ref{lower bound on m_{i,1} for small i}.
\end{example}
\begin{remark}
Let $F$ be a field as in Example \ref{refined m for sg field with loop}. Then $2 = m(F) < u(F) = 8$. Furthermore, for such a field $F$ and for $j = 1$ and $2 \leq i \leq 6$, both inequalities of Theorem \ref{upper and lower bounds} are strict. Indeed, $\max\{1, u(F) + 2j - 1 - i\} = 9 - i > 2$,
and since $m(F) = 2$ we have $\max\{1, m(F) + 2j - 1 - i\} = 1$.
We showed in Example \ref{refined m for sg field with loop} that $m_{i,1}(F) = 2$ for $1 \leq i \leq 7$. So for $j = 1$ and $2\leq i \leq 6$ we have
\[
	\max\{1, m(F) + 2j - 1 - i\} < m_{i,j}(F) < \max\{1, u(F) + 2j - 1 - i\}.
\]
\end{remark}

Lemma \ref{lower bound on m_{i,1} for small i} also allows us to show that two fields with the same refined $m$-invariants must have the same $u$-invariant.
\begin{theorem}
\label{refined m determine u}
Let $k,k'$ be fields of characteristic $\ne 2$. If $m_{i,1}(k) =$~$m_{i,1}(k')$ for all integers $i \geq 1$, then $u(k) = u(k')$.
\end{theorem}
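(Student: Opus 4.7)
The plan is to recover the $u$-invariant from the sequence $(m_{i,1}(k))_{i \geq 1}$ via the identity
\[
    u(k) \;=\; \inf\{i \geq 1 : m_{i,1}(k) = 1\},
\]
using the convention that the infimum of the empty set is $\infty$. Once this characterization is established, the theorem follows immediately: the hypothesis $m_{i,1}(k) = m_{i,1}(k')$ for every $i \geq 1$ implies that the sets $\{i \geq 1 : m_{i,1}(k) = 1\}$ and $\{i \geq 1 : m_{i,1}(k') = 1\}$ coincide, and hence the two infima agree.

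For the easy direction ``$m_{i,1}(k) = 1$ whenever $i \geq u(k)$,'' I will observe that for any $a \in k^{\times}$ and any $i$-dimensional form $\sigma_i$ over $k$, the sum $\langle a \rangle \perp \sigma_i$ has dimension $i + 1 > u(k)$ and is therefore isotropic. Consequently $\langle a \rangle$ is an $(i,1)$-realizing form, so $m_{i,1}(k) \leq 1$.

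For the converse, I must show that $m_{i,1}(k) \geq 2$ whenever $i < u(k)$, including the case $u(k) = \infty$. When $u(k) < \infty$ this is precisely Lemma \ref{lower bound on m_{i,1} for small i}; for the general case I plan to use the following uniform variant of that argument. Suppose $m_{i,1}(k) = 1$, so that there exists $a \in k^{\times}$ with $\langle a \rangle \perp \sigma$ isotropic for every $i$-dimensional form $\sigma$ over $k$. Since $u(k) > i$, I choose an anisotropic form $q_0$ of dimension $i + 1$ over $k$ and any entry $c$ of $q_0$; then the similar form $(a/c) \cdot q_0$ is still anisotropic, still has dimension $i + 1$, and carries $a$ as an entry. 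By \cite[Theorem~I.2.3]{lam} it decomposes as $(a/c) \cdot q_0 \simeq \langle a \rangle \perp \sigma$ for some $i$-dimensional $\sigma$, which forces $\langle a \rangle \perp \sigma$ to be anisotropic, contradicting the choice of $a$.

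The main obstacle I anticipate is precisely the case where exactly one of $u(k), u(k')$ is infinite, since Lemma \ref{lower bound on m_{i,1} for small i} is phrased only for fields of finite $u$-invariant and uses the universality of an anisotropic form of dimension $u(k)$—a tool unavailable in the infinite case. The similarity trick above bypasses this issue by producing a form that represents $a$ directly, without appealing to universality, and so closes the gap uniformly in $u(k)$.
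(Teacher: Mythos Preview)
Your proof is correct and in fact sharper than the paper's. You establish the exact identity $u(k) = \inf\{i \geq 1 : m_{i,1}(k) = 1\}$, which is stronger than merely showing that the sequence $(m_{i,1})$ determines $u$. The paper instead argues by contradiction in two separate cases. In the finite case it matches your reasoning, invoking Lemma~\ref{lower bound on m_{i,1} for small i} and Lemma~\ref{upper bound on m_{i,j}} at the index $i = u(k') - 1$. In the infinite case, rather than your scaling trick, the paper uses a doubling argument: from $m_{i,1}(k) = 1$ it deduces that every $i$-dimensional form represents $-a$, hence every $2i$-dimensional form $\sigma_i \perp -\sigma_i'$ is isotropic, yielding only the weaker bound $u(k) < 2i$. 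Your scaling argument---replacing an anisotropic $(i{+}1)$-dimensional form $q_0$ by $(a/c)\cdot q_0$ so that $a$ appears as an entry---is more elementary, gives the optimal bound $u(k) \leq i$, and handles the finite and infinite cases uniformly, whereas the paper's Lemma~\ref{lower bound on m_{i,1} for small i} relies on universality of $u(k)$-dimensional anisotropic forms and is therefore unavailable when $u(k) = \infty$.
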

\begin{proof}
First, assume that $u(k) = \infty$ and, by contradiction, assume $u(k') < \infty$. Let $i = u(k')$. Then $m_{i,1}(k') = 1$ by Lemma \ref{upper bound on m_{i,j}}, so $m_{i,1}(k) = 1$. Therefore there is some $a \in k^{\times}$ such that $\langle a \rangle \perp \sigma_i$ is isotropic over $k$ for all $i$-dimensional forms $\sigma_i$ over~$k$. This implies that all $i$-dimensional quadratic forms over $k$ represent $-a$ \cite[Corollary~I.3.5]{lam}. Now let $\sigma_{2i}$ be any $2i$-dimensional quadratic form over $k$. Then we can write $\sigma_{2i} \simeq \sigma_i \perp -\sigma_i'$ for some $i$-dimensional forms $\sigma_i, \sigma_i'$ over $k$. Both~$\sigma_i$ and $\sigma_i'$ represent $-a$, so $\sigma_{2i}$ is isotropic by \cite[Corollary~I.3.6]{lam}. Since $\sigma_{2i}$ was arbitrary, we conclude $u(k) < 2i < \infty$. This contradicts our assumption that $u(k) = \infty$, hence $u(k') = \infty$.

Now assume $u(k), u(k') < \infty$. By contradiction, suppose $u(k) \ne u(k')$, and without loss of generality, assume $u(k) <$~$u(k')$. In particular, since $u(k) \geq 1$, we conclude that $u(k') \geq 2$. Now, let $i = u(k') - 1$. Then $i \geq u(k)$, so  $m_{i,1}(k) = 1$ by Lemma \ref{upper bound on m_{i,j}}. On the other hand, since $2 \leq u(k') < \infty$, Lemma \ref{lower bound on m_{i,1} for small i} implies that $m_{i,1}(k') \geq 2$. Thus $m_{i,1}(k) = 1 < 2 \leq m_{i,1}(k')$. This contradicts our assumption that $m_{i,1}(k) = m_{i,1}(k')$, so $u(k) = u(k')$, as desired.
\end{proof}

\begin{remark}
\label{do the m_{i,1} determine m_{i,j}}
Theorem \ref{refined m determine u} shows that the invariants $m_{i,1}$ determine the $u$-invariant of a field. It would also be interesting to know if the values of the invariants $m_{i,1}$ together determine the values of all the invariants $m_{i,j}$. More precisely, if two fields $k, k'$ of characteristic $\ne 2$ satisfy $m_{i,1}(k) = m_{i,1}(k')$ for all integers $i \geq 1$, then must $m_{i,j}(k) = m_{i,j}(k')$ for all integers $i,j \geq 1$?
\end{remark}

We now focus on answering Question \ref{what if} as we let $j$ vary. As we saw in Remark \ref{completely determined}, if $m(k) = u(k) < \infty$, then for any integers $i, j \geq 1$ such that $m(k) + 2j - 1 - i \geq 1$ we have
\[
	m_{i,j+s}(k) = m_{i,j}(k) + 2s
\]
for all integers $s \geq 0$. The next few results show that, even if $m(k) < u(k)$, the invariants $m_{i,j}(k)$ follow the behavior of Remark \ref{completely determined} more closely as we vary $j$ than they do as we vary $i$ (compare Example \ref{refined m for sg field with loop} to Corollary \ref{exact values for m_{i,j} for large j}).

\begin{lemma}
\label{increasing j in m_{i,j}}
Let $k$ be any field of characteristic $\ne 2$, and let $i, j, s \geq 1$ be any positive integers. If $m_{i,j}(k) < \infty$, then $m_{i, j+s}(k) \leq m_{i,j}(k) + 2s$.
\end{lemma}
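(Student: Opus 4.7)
The plan is to produce an $(i, j+s)$-realizing form of dimension $m_{i,j}(k) + 2s$ by starting with a minimal-dimensional $(i,j)$-realizing form and orthogonally adding $s$ hyperbolic planes.

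More precisely, let $q$ be an $(i,j)$-realizing form over $k$ of dimension $m_{i,j}(k)$, which exists by hypothesis. Set $q' = q \perp s\HH$, so $\dim q' = m_{i,j}(k) + 2s$. First, I would verify that $i_W(q') < j+s$: since adding a hyperbolic plane to any form raises its Witt index by exactly one, we have $i_W(q') = i_W(q) + s < j + s$, using that $q$ is $(i,j)$-realizing. Next, for any $i$-dimensional quadratic form $\sigma_i$ over $k$,
\[
    i_W(q' \perp \sigma_i) = i_W(q \perp \sigma_i \perp s\HH) = i_W(q \perp \sigma_i) + s \geq j + s,
\]
again since $q$ is $(i,j)$-realizing. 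Therefore $q'$ is an $(i, j+s)$-realizing form, and the definition of $m_{i, j+s}(k)$ gives $m_{i, j+s}(k) \leq \dim q' = m_{i,j}(k) + 2s$.

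There is no real obstacle here: the only subtlety worth flagging is the identity $i_W(\psi \perp r\HH) = i_W(\psi) + r$ for any regular $\psi$, which is a standard consequence of the Witt Decomposition Theorem (and can also be read off from Lemma \ref{Witt index of sum} together with the obvious lower bound $i_W(\psi \perp r\HH) \geq i_W(\psi) + r$ coming from the hyperbolic subform). This ensures that the Witt indices behave additively under the construction, which is exactly what makes both conditions for being $(i, j+s)$-realizing go through simultaneously.
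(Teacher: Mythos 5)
Your proof is correct and is essentially identical to the paper's: both take a minimal-dimensional $(i,j)$-realizing form $q$, pass to $q \perp s\HH$, and use the additivity $i_W(\psi \perp s\HH) = i_W(\psi) + s$ to verify both conditions of being $(i,j+s)$-realizing. The extra remark justifying that additivity is a reasonable clarification but does not change the argument.
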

\begin{proof}
Let $q$ be an $m_{i,j}(k)$-dimensional $(i,j)$-realizing form over $k$. Then the form $q \perp s\HH$ satisfies $i_W(q \perp s\HH) = i_W(q) + s < j + s$, and for all $i$-dimensional forms $\sigma_i$ over $k$ we have
\[
	i_W(q \perp s\HH \perp \sigma_i) = s + i_W(q \perp \sigma_i) \geq j + s.
\]
By definition, $m_{i,j+s}(k) \leq \dim (q \perp s\HH) = m_{i,j}(k) + 2s$.
\end{proof}

In the opposite direction, for particular values of $i$ relative to $u(k)$ and $j$, we can find an inequality that is sharper than the one given in Lemma \ref{initial inequalities}(b) (see Proposition \ref{m_{i,j} increases by 1}). First, we prove a lemma.
\begin{lemma}
\label{m_{i,j} at least 2}
    Let $k$ be a field of characteristic $\ne 2$ with $u(k) < \infty$, and let $i, j \geq 1$ be positive integers such that $i < u(k) + 2j - 2$. Then $m_{i,j}(k) \geq 2$.
\end{lemma}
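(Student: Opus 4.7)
The plan is to argue by contradiction in the spirit of Lemma \ref{lower bound on m_{i,1} for small i}. Suppose $m_{i,j}(k) = 1$; then some one-dimensional form $\langle a \rangle$ is $(i,j)$-realizing, meaning that $i_W(\langle a \rangle \perp \sigma_i) \geq j$ for every $i$-dimensional quadratic form $\sigma_i$ over $k$. My goal is to construct an explicit $\sigma_i$ that violates this inequality, which will force $m_{i,j}(k) \geq 2$ (including the case $m_{i,j}(k) = \infty$).

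For the construction, I would first pick an anisotropic quadratic form $q$ over $k$ with $\dim q = u(k)$, which exists since $u(k) < \infty$. A short argument shows that such a $q$ must be universal: if some $c \in k^{\times}$ were unrepresented by $q$, then $q \perp \langle -c \rangle$ would be anisotropic of dimension $u(k) + 1$, contradicting the definition of $u(k)$. Since $q$ is universal it represents $a$, so by \cite[Theorem~I.2.3]{lam} we can write $q \simeq \langle a \rangle \perp q''$ with $\dim q'' = u(k) - 1$. I would then set $\psi := q \perp (j-1)\HH \simeq \langle a \rangle \perp \tau$, where $\tau := q'' \perp (j-1)\HH$ has $\dim \tau = u(k) + 2j - 3$. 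Since $q$ is anisotropic, $i_W(\psi) = j - 1$.

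To finish, observe that the hypothesis $i < u(k) + 2j - 2$ is exactly $i \leq \dim \tau$, so I can take $\sigma_i$ to be any $i$-dimensional diagonal subform of $\tau$. Then $\langle a \rangle \perp \sigma_i$ is a subform of $\psi$, and the standard observation that a subform has Witt index at most that of the ambient form --- if $\varphi \simeq r\HH \perp \varphi_{an}$ is a subform of $\psi$, then $r\HH$ sits inside $\psi$ as a hyperbolic subform, forcing $i_W(\psi) \geq r$ --- yields $i_W(\langle a \rangle \perp \sigma_i) \leq i_W(\psi) = j - 1 < j$. This contradicts the $(i,j)$-realizing property of $\langle a \rangle$ and completes the argument. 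I do not foresee a serious technical obstacle; the main points to check are that the degenerate cases behave, namely that the dimension count $\dim \tau = u(k) + 2j - 3 \geq i \geq 1$ guarantees $\tau$ is of positive dimension, and that when $j = 1$ the hyperbolic part collapses and one recovers the argument of Lemma \ref{lower bound on m_{i,1} for small i} essentially verbatim.
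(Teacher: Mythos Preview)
Your proof is correct and, in fact, cleaner than the paper's. The paper argues in two cases: for $i < u(k)$ it reduces to $j = 1$ via Lemma~\ref{initial inequalities}(b) and then invokes Lemma~\ref{lower bound on m_{i,1} for small i}, while for $u(k) \leq i < u(k) + 2j - 2$ it writes $i = u(k) + s$ and builds an explicit $\sigma_i$ whose shape depends on the parity of $s$ (using $q \perp \frac{s}{2}\HH$ or $q \perp \langle -a \rangle \perp \frac{s-1}{2}\HH$). Your approach avoids both the case split on $i$ and the parity distinction by exploiting the universality of a $u(k)$-dimensional anisotropic form to absorb $\langle a \rangle$ into $q$ at the outset; once $\psi = \langle a \rangle \perp \tau$ has been arranged with $i_W(\psi) = j-1$ and $\dim \tau = u(k) + 2j - 3 \geq i$, the subform argument finishes uniformly. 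The paper's construction has the mild advantage of not needing universality (in the $s$ odd case it instead uses the explicit $\langle -a \rangle$ to create a hyperbolic plane with the given $\langle a \rangle$), but your route is shorter and conceptually more transparent. One small remark: your parenthetical about the case $m_{i,j}(k) = \infty$ is unnecessary here, since Lemma~\ref{upper bound on m_{i,j}} already guarantees $m_{i,j}(k) < \infty$ under the hypothesis $u(k) < \infty$.
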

\begin{proof}
    First, assume $i < u(k)$ (which implies that $u(k) \geq 2$). Then for any $j \geq 1$, by Lemma \ref{initial inequalities}(b) we have $m_{i,j}(k) \geq m_{i,1}(k) \geq 2$, where this last inequality follows from Lemma \ref{lower bound on m_{i,1} for small i}. This completes the proof if $i < u(k)$.

    Next, by contradiction, assume there are integers $i, j \geq 1$ such that $u(k) \leq i < u(k) + 2j - 2$ and $m_{i,j}(k) = 1$. This condition on $i$ implies that $j \geq 2$, and since $u(k) \leq i < u(k) + 2j - 2$, we can write $i = u(k) + s$ for some $0 \leq s \leq 2j - 3$. Next, because $m_{i,j}(k) = 1$, there must be some $a \in k^{\times}$ such that $i_W(\langle a \rangle \perp \sigma_i) \geq j$ for all $i$-dimensional forms $\sigma_i$ over $k$. From this inequality and Lemma \ref{Witt index of sum} we conclude that $i_W(\sigma_i) \geq j - 1$ for all $i$-dimensional forms $\sigma_i$ over~$k$.  Now let $q$ be any anisotropic $u(k)$-dimensional quadratic form over $k$, and let $\sigma_i$ be the $i$-dimensional quadratic form over $k$ defined by
\[
	\sigma_i = \begin{cases}
		q \perp \frac{s}{2}\HH &\text{ if $s$ is even,} \\
		q \perp \langle -a \rangle \perp \frac{s -1}{2}\HH &\text{ if $s$ is odd.}
	\end{cases}
\]
If $s$ is even, then $i_W(\sigma_i) = \frac{s}{2} \leq j - 2 < j - 1$, which is a contradiction. If $s$ is odd, then
\[
	i_W(\langle a \rangle \perp \sigma_i) = i_W\left(q \perp \frac{s+1}{2}\HH\right) = \frac{s+1}{2} \leq j - 1 < j.
\]
This is also a contradiction, so no such $a \in k^{\times}$ exists.
\end{proof}

\begin{prop}
\label{m_{i,j} increases by 1}
Let $k$ be a field of characteristic $\ne 2$ with $u(k) < \infty$. Let $j \geq 1$ be any positive integer, and let $i$ be a positive integer such that $1 \leq i < u(k) + 2j$. Then $m_{i,j+1}(k) \geq m_{i,j}(k) + 1$.
\end{prop}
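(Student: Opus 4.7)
The plan is: given an $(i,j+1)$-realizing form $q$ with $\dim q = m_{i,j+1}(k)$, produce an $(i,j)$-realizing form of strictly smaller dimension. This will give $m_{i,j}(k) \leq m_{i,j+1}(k) - 1$, which is exactly the desired inequality. Assume $m_{i,j+1}(k) < \infty$ (otherwise the inequality is trivial) and fix such a $q$. Note that the hypothesis $i < u(k) + 2j$ rewrites as $i < u(k) + 2(j+1) - 2$, so Lemma \ref{m_{i,j} at least 2} (applied with $j$ replaced by $j+1$) yields $\dim q = m_{i,j+1}(k) \geq 2$; this lower bound on $\dim q$ is what makes any dimension reduction possible. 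I split on whether $q$ is isotropic, noting that $i_W(q) < j+1$ forces $i_W(q) \leq j$.

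\emph{Case 1: $q$ is anisotropic.} Pick any entry $a$ of $q$ and write $q \simeq q' \perp \langle a \rangle$, so $\dim q' = \dim q - 1 \geq 1$. As a subform of the anisotropic form $q$, $q'$ is itself anisotropic, so $i_W(q') = 0 < j$. For any $i$-dimensional form $\sigma_i$ over $k$, applying Lemma \ref{Witt index of sum} to the decomposition $q \perp \sigma_i \simeq \langle a \rangle \perp (q' \perp \sigma_i)$ gives
\[
    i_W(q' \perp \sigma_i) \geq i_W(q \perp \sigma_i) - 1 \geq (j+1) - 1 = j,
\]
so $q'$ is an $(i,j)$-realizing form of dimension $\dim q - 1$.

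\emph{Case 2: $q$ is isotropic.} Then $i_W(q) \geq 1$ and we may write $q \simeq \HH \perp q''$ with $\dim q'' = \dim q - 2$. If $\dim q \geq 3$, then $q''$ is a genuine form, and the identity $i_W(\HH \perp \varphi) = 1 + i_W(\varphi)$ yields both $i_W(q'') = i_W(q) - 1 \leq j - 1 < j$ and $i_W(q'' \perp \sigma_i) = i_W(q \perp \sigma_i) - 1 \geq j$ for every $i$-dimensional $\sigma_i$; so $q''$ is $(i,j)$-realizing of dimension $\dim q - 2$. If instead $\dim q = 2$, then $q \simeq \HH$, and the defining property $i_W(\HH \perp \sigma_i) \geq j+1$ rewrites as $i_W(\sigma_i) \geq j$ for every $i$-dimensional $\sigma_i$ over $k$. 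Choose any $a \in k^{\times}$; writing $\sigma_i \simeq j\HH \perp \sigma'$ shows that $\langle a \rangle \perp \sigma_i$ contains $j \HH$ as a subform and hence has Witt index at least $j$, while $i_W(\langle a \rangle) = 0 < j$. Thus $\langle a \rangle$ is a one-dimensional $(i,j)$-realizing form, of dimension $1 = \dim q - 1$.

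In every case an $(i,j)$-realizing form of dimension at most $\dim q - 1 = m_{i,j+1}(k) - 1$ has been exhibited, giving $m_{i,j}(k) \leq m_{i,j+1}(k) - 1$. The one genuinely delicate point is the degenerate subcase $q \simeq \HH$ in Case 2, where splitting off a hyperbolic plane would leave nothing behind; this is precisely why Lemma \ref{m_{i,j} at least 2} must be invoked at the outset to ensure $\dim q \geq 2$, and is exactly where the hypothesis $i < u(k) + 2j$ enters the proof.
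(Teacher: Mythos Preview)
Your proof is correct and follows essentially the same strategy as the paper's: take a minimal $(i,j+1)$-realizing form $q$, invoke Lemma \ref{m_{i,j} at least 2} to get $\dim q \geq 2$, and split on whether $q$ is isotropic or anisotropic, in each case producing an $(i,j)$-realizing form of strictly smaller dimension. The only difference is that you explicitly treat the degenerate subcase $q \simeq \HH$ (where splitting off a hyperbolic plane would leave the zero form), whereas the paper's write-up glosses over this; your handling of it is correct and arguably makes the argument more complete.
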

\begin{proof}
By assumption, $1 \leq i < u(k) + 2(j+1)- 2$. Thus $m_{i, j+1}(k) \geq 2$ by Lemma \ref{m_{i,j} at least 2}. Moreover, since $u(k) < \infty$, Lemma \ref{upper bound on m_{i,j}} implies that $m_{i,j+1}(k) < \infty$. Let $q$ be a quadratic form over $k$ of dimension $m_{i,j+1}(k)$ such that $i_W(q) < j + 1$ and $i_W(q \perp \sigma_i) \geq j + 1$ for all $i$-dimensional forms~$\sigma_i$ over $k$.

If $q$ is isotropic, then we can write $q \simeq \HH \perp q'$ for some form $q'$ over $k$ with $i_W(q') < j$ and $i_W(q' \perp \sigma_i) \geq j$ for all $i$-dimensional forms $\sigma_i$ over $k$. Therefore $\dim q' \geq m_{i,j}(k)$, so
\[
	m_{i,j+1}(k) = \dim q = \dim q' + 2 \geq m_{i,j}(k) + 2 > m_{i,j}(k) + 1.
\]
If $q$ is anisotropic, then writing $q \simeq \langle a \rangle \perp q'$ for some $a \in k^{\times}$, we have $i_W(q') = 0 < j$, and for all $i$-dimensional forms $\sigma_i$ over $k$, $i_W(q \perp \sigma_i) = i_W\left(\langle a \rangle \perp q' \perp \sigma_i\right) \geq j + 1$. By Lemma \ref{Witt index of sum}, this implies that $i_W(q' \perp \sigma_i) \geq j$ for all $i$-dimensional forms $\sigma_i$ over $k$, hence $\dim q' \geq m_{i,j}(k)$. Thus $m_{i,j+1}(k) = \dim q = \dim q' + 1 \geq m_{i,j}(k) + 1$.
\end{proof}

If $j$ is sufficiently large compared to $i$, then we can find exact expressions for $m_{i,j}(k)$ as we increase~$j$ (see Corollary \ref{exact values for m_{i,j} for large j}).
\begin{lemma}
\label{m_{i,j} for large j}
For any field $k$ of characteristic $\ne 2$ and positive integers $i,j$ such that $j \geq \lceil \frac{i}{2} \rceil \geq 1$, if $m_{i,j+1}(k) < \infty$, then $m_{i,j+1}(k) \geq m_{i,j}(k) + 2$.
\end{lemma}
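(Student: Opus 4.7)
The plan is to show that any $(i,j+1)$-realizing form $q$ over $k$ of minimal dimension $m_{i,j+1}(k)$ must be isotropic. Once this is established, writing $q \simeq \HH \perp q'$ gives $\dim q' = m_{i,j+1}(k) - 2$, $i_W(q') = i_W(q) - 1 < j$, and for every $i$-dimensional quadratic form $\sigma_i$ over $k$,
\[
    i_W(q' \perp \sigma_i) = i_W(q \perp \sigma_i) - 1 \geq j.
\]
Thus $q'$ is an $(i,j)$-realizing form, and therefore $m_{i,j}(k) \leq \dim q' = m_{i,j+1}(k) - 2$.

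The content of the proof is in establishing that $q$ must be isotropic. I argue by contradiction: suppose $q$ is anisotropic, so $i_W(q) = 0$. The plan is to exhibit a single $i$-dimensional test form $\sigma_i$ for which $i_W(q \perp \sigma_i)$ is provably small, in conflict with the $(i,j+1)$-realizing condition $i_W(q \perp \sigma_i) \geq j+1$. Concretely, I would take $\sigma_i$ to be as hyperbolic as possible: set $\sigma_i = (i/2)\HH$ if $i$ is even, and $\sigma_i = ((i-1)/2)\HH \perp \langle 1 \rangle$ if $i$ is odd. Grouping the hyperbolic summands and applying Lemma \ref{Witt index of sum} to the anisotropic remainder (using $i_W(q) = 0$), in both cases
\[
    i_W(q \perp \sigma_i) \leq \lceil i/2 \rceil.
\]
Indeed, for $i$ even, $q \perp \sigma_i \simeq (i/2)\HH \perp q$ has Witt index $i/2 + i_W(q) = i/2 = \lceil i/2 \rceil$; for $i$ odd, $q \perp \sigma_i \simeq ((i-1)/2)\HH \perp (q \perp \langle 1 \rangle)$ has Witt index at most $(i-1)/2 + 1 = \lceil i/2 \rceil$ since $i_W(q \perp \langle 1 \rangle) \leq i_W(q) + 1 = 1$ by Lemma \ref{Witt index of sum}.

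The hypothesis $j \geq \lceil i/2 \rceil$ now yields $i_W(q \perp \sigma_i) \leq \lceil i/2 \rceil \leq j < j+1$, contradicting that $q$ is $(i,j+1)$-realizing. Hence $q$ is isotropic, and the first paragraph completes the argument. The main conceptual step is spotting that the hypothesis $j \geq \lceil i/2 \rceil$ is precisely the threshold at which the anisotropic case of Proposition \ref{m_{i,j} increases by 1} becomes impossible: no $i$-dimensional form can force $q \perp \sigma_i$ to have Witt index larger than $\lceil i/2 \rceil$ when $q$ is anisotropic. Ruling out this case lets us peel off a hyperbolic plane, upgrading the $+1$ of Proposition \ref{m_{i,j} increases by 1} to the $+2$ claimed.
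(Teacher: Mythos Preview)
Your proof is correct and follows essentially the same approach as the paper: both use the maximally hyperbolic $i$-dimensional test form $\widetilde{\sigma}_i$ to force $i_W(q) \geq j+1 - \lceil i/2 \rceil \geq 1$, then peel off a hyperbolic plane to produce an $(i,j)$-realizing form of dimension $m_{i,j+1}(k)-2$. The only cosmetic difference is that you phrase the isotropy step as a contradiction (assume $i_W(q)=0$ and bound $i_W(q\perp\sigma_i)\le\lceil i/2\rceil$), whereas the paper argues directly; the content is identical.
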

\begin{proof}
Let $q$ be a quadratic form over $k$ of dimension $m_{i,j+1}(k)$ such that $i_W(q) < j + 1$ and $i_W(q \perp \sigma_i) \geq j+1$ for all $i$-dimensional forms $\sigma_i$ over $k$. Consider the $i$-dimensional form $\widetilde{\sigma}_i$ defined over $k$ by
\[
	\widetilde{\sigma}_i = \begin{cases}
		\frac{i}{2}\HH &\text{ if $i$ is even}, \\
		\langle 1 \rangle \perp \frac{i-1}{2}\HH &\text{ if $i$ is odd}.
	\end{cases}
\]
Then since $i_W\left(q \perp \widetilde{\sigma}_i\right) \geq j + 1$, we conclude $i_W(q) \geq j + 1 - \left\lceil \frac{i}{2}\right\rceil \geq 1$. So $q$ is isotropic, thus $q \simeq \HH \perp q'$ for some form $q'$ over $k$. Moreover, $i_W(q) = 1 + i_W(q') < j + 1$, so $i_W(q') < j$, and for all $i$-dimensional quadratic forms $\sigma_i$ over $k$ we have $i_W(q' \perp \sigma_i) \geq j$ since $i_W(q \perp \sigma_i) \geq j + 1$. Therefore $\dim q' \geq m_{i,j}(k)$ by definition, hence $m_{i,j+1}(k) = \dim q = \dim q' + 2 \geq m_{i,j}(k) + 2$.
\end{proof}
\begin{cor}
\label{exact values for m_{i,j} for large j}
Let $k$ be any field of characteristic $\ne 2$, and let $i$ and $j$ be positive integers such that $j \geq \lceil \frac{i}{2} \rceil \geq 1$. For any integer $s \geq 1$, if $m_{i,j}(k) < \infty$, then $m_{i,j+s}(k) = m_{i,j}(k) + 2s$.
\end{cor}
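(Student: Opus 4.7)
The plan is to get the two inequalities separately and then combine them. For the upper bound $m_{i,j+s}(k) \leq m_{i,j}(k) + 2s$, I would simply invoke Lemma \ref{increasing j in m_{i,j}} directly (taking any $(i,j)$-realizing form of minimal dimension and summing with $s$ hyperbolic planes). This is the easy half, and it also gives the finiteness $m_{i,j+s}(k) \leq m_{i,j}(k) + 2s < \infty$ that will matter when applying Lemma \ref{m_{i,j} for large j} in the second half.

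For the reverse inequality $m_{i,j+s}(k) \geq m_{i,j}(k) + 2s$, I would proceed by induction on $s \geq 1$. The base case $s = 1$ is exactly Lemma \ref{m_{i,j} for large j} (its hypothesis $j \geq \lceil i/2 \rceil$ is what we already assumed, and finiteness of $m_{i,j+1}(k)$ follows from the paragraph above). For the inductive step, assume $m_{i,j+s-1}(k) \geq m_{i,j}(k) + 2(s-1)$; I would then apply Lemma \ref{m_{i,j} for large j} with $j$ replaced by $j + s - 1$. The hypothesis for this application reads $j+s-1 \geq \lceil i/2 \rceil$, which is immediate since $s-1 \geq 0$ and $j \geq \lceil i/2 \rceil$; the required finiteness $m_{i,j+s}(k) < \infty$ comes from the upper-bound half. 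Hence
\[
    m_{i,j+s}(k) \geq m_{i,j+s-1}(k) + 2 \geq m_{i,j}(k) + 2(s-1) + 2 = m_{i,j}(k) + 2s,
\]
completing the induction. Combining the two inequalities yields the claimed equality.

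There is no real obstacle here: this is a short inductive bootstrap built entirely from Lemmas \ref{increasing j in m_{i,j}} and \ref{m_{i,j} for large j}. The only care needed is bookkeeping the finiteness condition appearing in Lemma \ref{m_{i,j} for large j}, which is handled once and for all by the upper bound.
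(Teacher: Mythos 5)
Your proposal is correct and follows essentially the same route as the paper: the upper bound comes directly from Lemma \ref{increasing j in m_{i,j}}, and the lower bound is an induction on $s$ driven by repeated applications of Lemma \ref{m_{i,j} for large j}. The paper simply states this tersely (``follows immediately by induction''), whereas you have spelled out the bookkeeping of the hypothesis $j + s - 1 \geq \lceil i/2 \rceil$ and the finiteness condition, which is exactly the right care to take.
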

\begin{proof}
We first observe that $m_{i, j+s}(k) \leq m_{i,j}(k) + 2s$ by Lemma \ref{increasing j in m_{i,j}}. The reverse inequality follows immediately by induction on $s \geq 1$ and Lemma \ref{m_{i,j} for large j}. Hence $m_{i,j+s}(k) = m_{i,j}(k) + 2s$.
\end{proof}
For the majority of this subsection we have studied fields with finite invariants $m_{i,j}$. We conclude this subsection by studying fields with infinite invariants $m_{i,j}$. Recall that $m_{i,j}(k) = \infty$ for a field~$k$ and integers $i,j \geq 1$ if and only if there are no $(i,j)$-realizing forms over $k$.

\begin{lemma}
\label{infinite m_{i,j} for j}
Let $k$ be a field of characteristic $\ne 2$ and suppose there are integers $i^*, j^* \geq 1$ such that $m_{i^*,j^*}(k) = \infty$. Then $m_{i^*, j}(k) = \infty$ for all integers $j \geq 1$.
\end{lemma}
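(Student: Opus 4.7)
The plan is to split into two cases according to whether $j \geq j^*$ or $j < j^*$, and in each case apply one of the previously established monotonicity lemmas contrapositively. Since no $(i^*, j^*)$-realizing form exists over $k$, I want to show that the same must be true when $j^*$ is replaced by any other positive integer $j$.

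For the case $j \geq j^*$, I would argue by contradiction: assume $m_{i^*, j}(k) < \infty$. Then Lemma \ref{initial inequalities}(b), applied with the roles $(i, j, j') = (i^*, j^*, j)$, yields $m_{i^*, j}(k) \geq m_{i^*, j^*}(k)$. The right-hand side is $\infty$ by hypothesis, so this forces $m_{i^*, j}(k) = \infty$, contradicting finiteness.

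For the case $j < j^*$, I would again argue by contradiction: assume $m_{i^*, j}(k) < \infty$, so there exists an $(i^*, j)$-realizing form $q$ over $k$ of dimension $m_{i^*,j}(k)$. Then Lemma \ref{increasing j in m_{i,j}}, applied with $s = j^* - j \geq 1$, gives
\[
    m_{i^*, j^*}(k) \leq m_{i^*, j}(k) + 2(j^* - j) < \infty,
\]
contradicting $m_{i^*, j^*}(k) = \infty$. Equivalently, one could exhibit directly the $(i^*, j^*)$-realizing form $q \perp (j^* - j)\HH$, whose Witt index is $i_W(q) + (j^* - j) < j^*$ and which satisfies $i_W(q \perp (j^*-j)\HH \perp \sigma_{i^*}) \geq j^*$ for every $i^*$-dimensional $\sigma_{i^*}$.

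I do not anticipate any real obstacle here; the statement is essentially a repackaging of Lemmas \ref{initial inequalities}(b) and \ref{increasing j in m_{i,j}}, combined to say that the value $\infty$ for $m_{i^*,\,\cdot}(k)$ propagates both upward and downward in $j$. The only mild care needed is to formulate Lemma \ref{initial inequalities}(b) correctly, since as stated it assumes the larger invariant is finite, which is exactly what we negate in the contradiction.
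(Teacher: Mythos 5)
Your proof is correct and follows essentially the same route as the paper's: both arguments proceed by contradiction, applying Lemma~\ref{initial inequalities}(b) when $j \geq j^*$ and Lemma~\ref{increasing j in m_{i,j}} when $j < j^*$ to contradict $m_{i^*,j^*}(k) = \infty$. The only cosmetic difference is how the trivial case $j = j^*$ is absorbed into the case split.
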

\begin{proof}
By contradiction, suppose there is some integer $j \geq 1$ such that $m_{i^*, j}(k) < \infty$. By our assumption on $j^*$, this implies that $j \ne j^*$, so $j < j^*$ or $j > j^*$. If $j > j^*$, then since $m_{i^*, j}(k) < \infty$, we have $m_{i^*, j^*}(k) \leq m_{i^*, j}(k) < \infty$ by Lemma \ref{initial inequalities}(b). If $j < j^*$ and $m_{i^*, j}(k) < \infty$, then Lemma \ref{increasing j in m_{i,j}} implies $m_{i^*, j^*}(k) \leq m_{i^*, j}(k) + 2(j^* - j) < \infty$. In both cases we have contradicted our assumption that $m_{i^*, j^*}(k) = \infty$. So $m_{i^*,j}(k) = \infty$ for all $j \geq 1$, as desired.
\end{proof}
\begin{lemma}
\label{infinite m_{i,j} for i}
Let $k$ be a field of characteristic $\ne 2$ and suppose there are integers $i^*, j^* \geq 1$ such that $m_{i^*,j^*}(k) = \infty$. Then $m_{i,j^*}(k) = \infty$ for all integers $i \geq 1$.
\end{lemma}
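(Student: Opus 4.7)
The plan is to split into two cases depending on whether $i \le i^*$ or $i > i^*$, and to handle the harder case $i > i^*$ by constructing an $(i^*, j^*)$-realizing form from a hypothetical $(i, j^*)$-realizing form, thereby reaching a contradiction.

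For the case $i \le i^*$, I would invoke Lemma \ref{initial inequalities}(a) contrapositively: if $m_{i, j^*}(k) < \infty$, then since $i^* \ge i$, we would have $m_{i^*, j^*}(k) \le m_{i, j^*}(k) < \infty$, contradicting the hypothesis $m_{i^*, j^*}(k) = \infty$. Hence $m_{i, j^*}(k) = \infty$.

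For the case $i > i^*$, I argue by contradiction: suppose $m_{i, j^*}(k) < \infty$, and let $q$ be an $m_{i, j^*}(k)$-dimensional $(i, j^*)$-realizing form. Set $r = i - i^*$, so $1 \le r < i$ (using $i^* \ge 1$). Since $m_{i^*, j^*}(k) = \infty$, the hypothesis $\dim q \le m_{i^*, j^*}(k) - r$ of Lemma \ref{add on a form} is trivially satisfied, so there exists an $r$-dimensional quadratic form $\sigma_r$ over $k$ with $i_W(q \perp \sigma_r) < j^*$. I would then verify that $q \perp \sigma_r$ is an $(i^*, j^*)$-realizing form: its Witt index is already $< j^*$, and for any $i^*$-dimensional form $\sigma_{i^*}$ over $k$, the orthogonal sum $\sigma_r \perp \sigma_{i^*}$ has dimension $r + i^* = i$, so $i_W(q \perp \sigma_r \perp \sigma_{i^*}) \ge j^*$ by the $(i, j^*)$-realizing property of $q$. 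Hence $m_{i^*, j^*}(k) \le \dim(q \perp \sigma_r) < \infty$, a contradiction.

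The one subtlety to address will be that Lemma \ref{add on a form} is stated with a finite upper bound $m_{i-r, j}(k) - r$ on $\dim q$; I need to be sure its proof goes through when $m_{i-r, j}(k) = \infty$. This is the only potential obstacle, and it resolves cleanly: the proof of Lemma \ref{add on a form} only requires that whenever a form $q'$ has $i_W(q') < j$ and $\dim q' < m_{i-r, j}(k)$, one can find an $(i-r)$-dimensional form $\sigma_{i-r}$ with $i_W(q' \perp \sigma_{i-r}) < j$, which is automatic when $m_{i-r, j}(k) = \infty$ by the definition of $m_{i-r,j}(k)$ (otherwise $q'$ would itself be an $(i-r, j)$-realizing form). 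With this observation the argument above is complete.
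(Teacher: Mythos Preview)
Your proof is correct. Both cases are handled soundly, and your observation that the proof of Lemma~\ref{add on a form} goes through verbatim when $m_{i-r,j}(k) = \infty$ is accurate: the only use of the dimension bound there is to guarantee that at each step the current form is not an $(i-r,j)$-realizing form, and that is automatic when no such realizing forms exist.

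The paper takes a slightly different route in the case $i > i^*$. Rather than starting from a hypothetical $(i,j^*)$-realizing form and using Lemma~\ref{add on a form} to manufacture an $(i^*,j^*)$-realizing form, the paper argues directly that no $(i,j^*)$-realizing form can exist: given any $q$ with $i_W(q) < j^*$, it repeatedly appends $i^*$-dimensional forms (using that no $(i^*,j^*)$-realizing form exists) until the total added dimension is at least $i$, then takes an $i$-dimensional subform $\widetilde{\sigma}_i$ with $i_W(q \perp \widetilde{\sigma}_i) < j^*$. Your approach is more economical in that it reuses Lemma~\ref{add on a form} rather than reproving a variant of it inline, at the modest cost of checking that the lemma applies in the infinite case; the paper's approach is more self-contained. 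The underlying idea---that $m_{i^*,j^*}(k) = \infty$ lets one indefinitely extend forms of Witt index $< j^*$ without the Witt index reaching $j^*$---is the same in both.
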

\begin{proof}
By contradiction, assume there is some integer $i \geq 1$ such that $m_{i,j^*}(k) < \infty$. By our assumption on $i^*$, this implies $i \ne i^*$, hence $i < i^*$ or $i > i^*$. If $i < i^*$, then Lemma \ref{initial inequalities}(a) implies that $m_{i^*, j^*}(k) \leq m_{i,j^*}(k) < \infty$. This contradicts our assumption that $m_{i^*, j^*}(k) = \infty$, so $i > i^*$.

Because $m_{i^*,j^*}(k) = \infty$, there are no $(i^*,j^*)$-realizing forms over $k$. So for each quadratic form~$q$ over $k$ with $i_W(q) < j^*$ there is some $i^*$-dimensional form $\sigma_{i^*}$ over $k$ such that $i_W(q \perp \sigma_{i^*}) < j^*$. Similarly, because $i_W(q \perp \sigma_{i^*}) < j^*$, there must be an $i^*$-dimensional form $\sigma_{i^*}'$ over $k$ such that $i_W(q \perp \sigma_{i^*} \perp \sigma_{i^*}') < j^*$. Now let $n$ be any integer such that $n \cdot i^* \geq i$. Then by the above reasoning we can find $n$ quadratic forms $\sigma_1, \ldots, \sigma_n$ over $k$, each with dimension $i^*$, such that $i_W(q \perp \sigma_1 \perp \sigma_2 \perp \cdots \perp \sigma_n) < j^*$. In particular, for all $i$-dimensional subforms $\widetilde{\sigma}_i$ of $\sigma_1 \perp \cdots \perp \sigma_n$ we have $i_W(q \perp \widetilde{\sigma}_i) < j^*$. Since $q$ was an arbitrary quadratic form over $k$ with $i_W(q) < j^*$, we conclude that there are no $(i, j^*)$-realizing forms over $k$, hence $m_{i,j^*}(k) = \infty$, and we have reached a contradiction. Therefore $m_{i,j^*}(k) = \infty$ for all~$i \geq 1$.
\end{proof}

\begin{prop}
\label{if m_{i,j} = infty}
Let $k$ be a field of characteristic $\ne 2$. Then there are integers $i^*, j^* \geq 1$ such that $m_{i^*,j^*}(k) = \infty$ if and only if $m_{i,j}(k) = \infty$ for all integers $i, j \geq 1$.
\end{prop}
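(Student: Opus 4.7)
The plan is to combine the two preceding lemmas in a straightforward two-step argument. The reverse implication is immediate: if $m_{i,j}(k) = \infty$ for all $i, j \geq 1$, then in particular we may take $i^* = j^* = 1$.

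For the forward implication, suppose $m_{i^*, j^*}(k) = \infty$ for some integers $i^*, j^* \geq 1$. First I would apply Lemma \ref{infinite m_{i,j} for i} to conclude that $m_{i, j^*}(k) = \infty$ for every integer $i \geq 1$; this ``spreads'' the infiniteness across the first index while keeping $j^*$ fixed. Next, for each fixed $i \geq 1$, I would apply Lemma \ref{infinite m_{i,j} for j} to the equality $m_{i, j^*}(k) = \infty$ to conclude that $m_{i,j}(k) = \infty$ for every integer $j \geq 1$. Since $i \geq 1$ was arbitrary, this gives $m_{i,j}(k) = \infty$ for all $i, j \geq 1$, as required.

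There is no real obstacle here; the work has all been done in Lemmas \ref{infinite m_{i,j} for j} and \ref{infinite m_{i,j} for i}, and this proposition is simply the observation that applying one lemma followed by the other propagates infiniteness in both indices simultaneously. The only thing to be careful about is the order of application: one must first use the lemma that varies $i$ (to reach every $i$ at the single value $j^*$), and then use the lemma that varies $j$ at each of those $i$ separately, since both lemmas require a finite starting point of the form $m_{i^*, j^*}(k) = \infty$ with the chosen index fixed.
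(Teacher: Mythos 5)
Your proof is correct and takes essentially the same approach as the paper, which simply cites Lemmas \ref{infinite m_{i,j} for j} and \ref{infinite m_{i,j} for i} for the forward implication. One small inaccuracy in your commentary: the order of application is in fact immaterial (you could equally well first spread along $j$ at the fixed $i^*$ and then along $i$ at each $j$), and the phrase ``finite starting point'' should presumably read ``a single known infinite value,'' but neither affects the validity of the argument itself.
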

\begin{proof}
The reverse implication is trivial. Lemmas~\ref{infinite m_{i,j} for j} and~\ref{infinite m_{i,j} for i} prove the forward implication.
\end{proof}
For example, Proposition \ref{if m_{i,j} = infty} implies $m_{i,j}(\mathbb{R}) = \infty$ for all integers $i, j \geq 1$ since $m_{1,1}(\mathbb{R}) = \infty$. We also note that Proposition \ref{if m_{i,j} = infty} gives a positive answer to the question posed in Remark~\ref{do the m_{i,1} determine m_{i,j}} if one of the invariants $m_{i,1}$ is infinite.

\subsection{Refined $m$-invariants of complete discretely valued fields} 
\label{refined m of cdvf}
Let $K$ be a complete discretely valued field with residue field $k$ of characteristic $\ne 2$. Using Springer's Theorem, one can show that $u(K) = 2u(k)$ and $m(K) = 2m(k)$, and the proof of these facts suggests that one should view these as additive identities; i.e., $u(K) = u(k) + u(k)$ and $m(K) = m(k) + m(k)$. We now want to use this viewpoint to compute the refined $m$-invariants $m_{i,1}(K)$ for $i \geq 1$ in terms of the refined $m$-invariants of $k$. Before proceeding, we observe that for any $i \geq u(K)$, Lemma \ref{upper bound on m_{i,j}} implies that $1 \leq m_{i,1}(K) \leq \max\{1, u(K) + 1 - i\} = 1$, so $m_{i,1}(K) = 1$. Therefore we need to focus only on integers $i$ such that $1 \leq i < u(K)$. Throughout this subsection we will assume that $u(k) < \infty$, as this assumption guarantees $m_{r,1}(k) < \infty$ for all $1 \leq r \leq u(k)$ by Lemma \ref{upper bound on m_{i,j}}. In particular, for all $1 \leq r \leq u(k)$ an $(r,1)$-realizing form exists over $k$.

Throughout this subsection we will use the following terminology. Let $K$ be a complete discretely valued field with valuation ring $\mathcal{O}_K$ and residue field $k$ of characteristic $\ne 2$. Let $\overline{a}_1, \ldots, \overline{a}_n$ be any non-zero elements of $k$, and consider the quadratic form $\overline{q} = \langle \overline{a}_1, \ldots, \overline{a}_n \rangle$ over $k$. For each $i = 1, \ldots, n$ let $a_i \in \mathcal{O}_K^{\times}$ be a unit in $\mathcal{O}_K$ whose image in $k$ is $\overline{a}_i$. Then we call the quadratic form $q = \langle a_1, \ldots, a_n \rangle$ over $K$ a \textit{lift} of $\overline{q}$ to $K$.

We begin by proving a lemma that will be used at several points of this subsection.
\begin{lemma}
\label{compensating dimension}
    Let $K$ be a complete discretely valued field with uniformizer $\pi$ and residue field~$k$ of characteristic $\ne 2$ such that $2 \leq u(k) < \infty$. Let $i$ be any integer such that $i \geq 2$, and let $q \simeq q_1 \perp \pi \cdot q_2$ be an $(i,1)$-realizing form over $K$, where the entries of $q_1$ and $q_2$ are all units in the valuation ring of $K$. If $1 \leq \dim q_1 < m_{s-1,1}(k)$ for some integer $s$ such that $2 \leq s \leq \min\{i, u(k)\}$ and $\dim q_2 \geq 1$, then $\dim q_2 \geq m_{i-s+1,1}(k)$.
\end{lemma}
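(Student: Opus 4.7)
The plan is to argue by contradiction: assume $\dim q_2 < m_{i-s+1,1}(k)$, and from this manufacture an $i$-dimensional form $\sigma_i$ over $K$ such that $q \perp \sigma_i$ remains anisotropic, which contradicts the hypothesis that $q$ is $(i,1)$-realizing.

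First I would extract information from Springer's Theorem. Since $q$ is $(i,1)$-realizing, in particular $i_W(q) < 1$, so $q$ is anisotropic over $K$. Writing $q \simeq q_1 \perp \pi\cdot q_2$ with unit entries, Springer's Theorem forces both residue forms $\overline{q_1}$ and $\overline{q_2}$ to be anisotropic over $k$; i.e., $i_W(\overline{q_1}) = i_W(\overline{q_2}) = 0 < 1$.

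Next, I would use the defining property of the refined $m$-invariants of $k$ twice. Because $i_W(\overline{q_1}) < 1$ and $\dim \overline{q_1} = \dim q_1 < m_{s-1,1}(k)$, the form $\overline{q_1}$ is not an $(s-1,1)$-realizing form over $k$; since the Witt-index condition is already met, there must exist an $(s-1)$-dimensional form $\overline{\tau}_1$ over $k$ such that $\overline{q_1} \perp \overline{\tau}_1$ is anisotropic. (Here $s-1 \geq 1$ by the assumption $s \geq 2$.) The contradiction hypothesis $\dim q_2 < m_{i-s+1,1}(k)$ and the anisotropy of $\overline{q_2}$ likewise yield an $(i-s+1)$-dimensional form $\overline{\tau}_2$ over $k$ (note $i-s+1 \geq 1$ since $s \leq i$) such that $\overline{q_2} \perp \overline{\tau}_2$ is anisotropic.

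Finally I would lift and apply Springer's Theorem in reverse. Choose unit lifts $\tau_1, \tau_2$ of $\overline{\tau}_1, \overline{\tau}_2$ to $K$, and set
\[
    \sigma_i = \tau_1 \perp \pi \cdot \tau_2,
\]
which is an $i$-dimensional quadratic form over $K$. Then
\[
    q \perp \sigma_i \simeq (q_1 \perp \tau_1) \perp \pi\cdot(q_2 \perp \tau_2),
\]
and both residue forms $\overline{q_1}\perp\overline{\tau}_1$ and $\overline{q_2}\perp\overline{\tau}_2$ are anisotropic over $k$ by construction; by Springer's Theorem, $q\perp\sigma_i$ is anisotropic over $K$, so $i_W(q\perp\sigma_i) = 0 < 1$. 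This contradicts the fact that $q$ is $(i,1)$-realizing, and the lemma follows. The main conceptual point (rather than a technical obstacle) is simply matching the dimensions of the two residue-level ``witness forms'' $\overline{\tau}_1,\overline{\tau}_2$ so that they combine via the Springer decomposition into exactly an $i$-dimensional form over $K$; the bookkeeping $(s-1)+(i-s+1)=i$ is what forces the particular split between $m_{s-1,1}(k)$ and $m_{i-s+1,1}(k)$ in the statement.
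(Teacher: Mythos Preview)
Your proof is correct and follows essentially the same approach as the paper's: both use Springer's Theorem together with the hypothesis $\dim q_1 < m_{s-1,1}(k)$ to produce an $(s-1)$-dimensional residue form keeping the first residue part anisotropic, and then combine it with an $(i-s+1)$-dimensional piece on the second residue part. The only cosmetic difference is that the paper argues directly---showing $\overline{q}_2$ is an $(i{-}s{+}1,1)$-realizing form (after disposing of the trivial case $i-s+1 \geq u(k)$, where $m_{i-s+1,1}(k)=1$)---whereas you phrase the same idea as a contradiction.
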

\begin{proof}
    If $i - s + 1 \geq u(k)$, then $m_{i-s+1,1}(k) = 1$, so the claim follows immediately since $\dim q_2 \geq 1$.
    
    Now assume $i - s + 1 < u(k)$. Since $q$ is an $(i,1)$-realizing form over $K$, it follows that $q$ is anisotropic over $K$. So by Springer's Theorem, both residue forms $\overline{q}_1, \overline{q}_2$ are anisotropic over~$k$. Moreover, $\dim \overline{q}_1 < m_{s-1,1}(k)$, so there must be an $(s-1)$-dimensional quadratic form $\overline{\sigma}_{s-1}$ over~$k$ such that $\overline{q}_1 \perp \overline{\sigma}_{s-1}$ is anisotropic over $k$. Now let $\overline{\sigma}_{i-s+1}$ be any $(i-s+1)$-dimensional quadratic form over $k$. Then for any lifts $\sigma_{s-1}, \sigma_{i-s+1}$ of $\overline{\sigma}_{s-1}, \overline{\sigma}_{i-s+1}$, respectively, to $K$, the quadratic form $q \perp (\sigma_{s-1} \perp \pi \cdot \sigma_{i-s+1})$ must be isotropic over $K$ since $q$ is an $(i,1)$-realizing form. Because $\overline{q}_1 \perp \overline{\sigma}_{s-1}$ is anisotropic over $k$, this implies that $\overline{q}_2 \perp \overline{\sigma}_{i-s+1}$ must be isotropic over $k$ by Springer's Theorem. The form $\overline{\sigma}_{i-s+1}$ was arbitrary, so we conclude that~$\overline{q}_2$ is an $(i-s+1,1)$-realizing form over $k$. Thus $\dim q_2 = \dim \overline{q}_2 \geq m_{i-s+1,1}(k)$.
\end{proof}

In the above notation, we will first compute $m_{i,1}(K)$ for integers $i$ such that $1 \leq i \leq u(k)$ (see Proposition \ref{m_{i,1} of cdvf for small i}).
\begin{lemma}
    \label{upper bound for m_{i,1} for small i}
Let $K$ be a complete discretely valued field with residue field $k$ of characteristic $\ne 2$ such that $u(k) < \infty$. Then for any integers $i$ and $r$ such that $1 \leq r \leq i \leq u(k)$, we have $m_{i,1}(K) \leq m_{r,1}(k) + m_{i-r+1,1}(k)$.
\end{lemma}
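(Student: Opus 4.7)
The plan is to construct, by lifting realizing forms from the residue field, an explicit $(i,1)$-realizing form over $K$ of dimension $m_{r,1}(k) + m_{i-r+1,1}(k)$. Let $\pi$ be a uniformizer of $K$. Since $1 \leq r \leq u(k)$ and $1 \leq i-r+1 \leq u(k)$ (the latter using $i \leq u(k)$ and $r \geq 1$), Lemma~\ref{upper bound on m_{i,j}} applied to $k$ ensures that $m_{r,1}(k)$ and $m_{i-r+1,1}(k)$ are both finite. Choose $(r,1)$- and $(i-r+1,1)$-realizing forms $\overline{q}_1$ and $\overline{q}_2$ over $k$ of minimal dimension, let $q_1, q_2$ be lifts to $K$, and set
\[
    q = q_1 \perp \pi \cdot q_2.
\]
Then $\dim q = m_{r,1}(k) + m_{i-r+1,1}(k)$, and the remainder of the proof is to verify $q$ is an $(i,1)$-realizing form over $K$.

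First I would check $i_W(q) = 0$. By definition of a realizing form, both $\overline{q}_1$ and $\overline{q}_2$ have Witt index $< 1$, hence are anisotropic over $k$, so Springer's Theorem applied to $q = q_1 \perp \pi \cdot q_2$ immediately gives $q$ anisotropic over $K$.

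Next, to verify the orthogonal-sum condition, let $\sigma_i$ be any $i$-dimensional quadratic form over $K$. After diagonalizing and pulling out powers of $\pi$ as in the paragraph preceding Springer's Theorem in Section~\ref{notation}, write $\sigma_i \simeq \sigma_i^{(1)} \perp \pi \cdot \sigma_i^{(2)}$ with unit entries, where $a = \dim \sigma_i^{(1)}$ and $b = \dim \sigma_i^{(2)}$ satisfy $a + b = i$. Then
\[
    q \perp \sigma_i \;\simeq\; \bigl(q_1 \perp \sigma_i^{(1)}\bigr) \perp \pi \cdot \bigl(q_2 \perp \sigma_i^{(2)}\bigr).
\]
By Springer's Theorem, it suffices to show that at least one of $\overline{q}_1 \perp \overline{\sigma}_i^{(1)}$ and $\overline{q}_2 \perp \overline{\sigma}_i^{(2)}$ is isotropic over $k$. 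Here the key dichotomy is that $a < r$ forces $b \geq i - r + 1$, since $a + b = i$. If $a \geq r$, then $\overline{\sigma}_i^{(1)}$ contains some $r$-dimensional subform $\overline{\tau}_r$, and by the $(r,1)$-realizing property of $\overline{q}_1$ the form $\overline{q}_1 \perp \overline{\tau}_r$ is isotropic; adding the remaining orthogonal summands preserves isotropy, so $\overline{q}_1 \perp \overline{\sigma}_i^{(1)}$ is isotropic. Symmetrically, if $a < r$ then $b \geq i-r+1$ and the $(i-r+1,1)$-realizing property of $\overline{q}_2$ produces isotropy of $\overline{q}_2 \perp \overline{\sigma}_i^{(2)}$. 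In either case $q \perp \sigma_i$ is isotropic over $K$.

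This shows $q$ is an $(i,1)$-realizing form over $K$, yielding $m_{i,1}(K) \leq \dim q = m_{r,1}(k) + m_{i-r+1,1}(k)$. There is no serious obstacle in this argument; the only point requiring care is the boundary cases $a = 0$ or $b = 0$, which are handled by the dichotomy above since $r \geq 1$ and $i - r + 1 \geq 1$, so the complementary inequality always places an $i$-dimensional block entirely in one residue form with enough room to trigger the realizing property.
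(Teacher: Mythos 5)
Your proof is correct and follows essentially the same approach as the paper: lift an $(r,1)$-realizing form $\overline{q}_1$ and an $(i-r+1,1)$-realizing form $\overline{q}_2$ from $k$, form $q = q_1 \perp \pi \cdot q_2$, use Springer's Theorem for anisotropy, and split an arbitrary $\sigma_i$ into unit and non-unit parts with the dichotomy $\dim \sigma_i^{(1)} \geq r$ versus $\dim \sigma_i^{(2)} \geq i-r+1$. The extra remarks on finiteness of the residue invariants and the $a=0$, $b=0$ boundary cases are minor clarifications that do not change the argument.
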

\begin{proof}
    Let $i,r$ be given. Let $\overline{q}_1$ be an $m_{r,1}(k)$-dimensional $(r,1)$-realizing form over $k$ and let $\overline{q}_2$ be an $m_{i-r+1,1}(k)$-dimensional $(i-r+1,1)$-realizing form over $k$. Let $q_1, q_2$ be lifts of $\overline{q}_1, \overline{q}_2$, respectively, to $K$, and for a uniformizer $\pi$ of $K$, consider the quadratic form $q$ over $K$ defined by $q = q_1 \perp \pi \cdot q_2$. Springer's Theorem implies that $q$ is anisotropic over $K$ since $\overline{q}_1$ and $\overline{q}_2$ are anisotropic over $k$. 
    
    Now let $\sigma_i \simeq \sigma_{i,1} \perp \pi \cdot \sigma_{i,2}$ be any $i$-dimensional quadratic form over $K$, where the entries of~$\sigma_{i,1}$ and $\sigma_{i,2}$ are all units in the valuation ring of $K$. If $r \leq \dim \sigma_{i,1}$, then by our choice of~$\overline{q}_1$, for any $r$-dimensional subform $\sigma_{r,1}$ of $\sigma_{i,1}$ we have $i_W(\overline{q}_1 \perp \overline{\sigma}_{r,1}) \geq 1$. Thus $i_W(\overline{q}_1 \perp \overline{\sigma}_{i,1}) \geq 1$, hence $i_W(q \perp \sigma_i) \geq 1$ by Springer's Theorem. If $r > \dim \sigma_{i,1}$, then $\dim \sigma_{i,2} =$~$i - \dim \sigma_{i,1} > i - r$, therefore $\dim \sigma_{i,2} \geq i - r + 1$. So by our choice of $\overline{q}_2$, $i_W(\overline{q}_2 \perp \overline{\sigma}_{i,2}) \geq 1$, thus $i_W(q \perp \sigma_i) \geq 1$. The form $\sigma_i$ was arbitrary, so $q$ is an $(i,1)$-realizing form over $K$, hence $m_{i,1}(K) \leq \dim q = m_{r,1}(k) + m_{i-r+1,1}(k)$.
\end{proof}

\begin{lemma}
    \label{preliminary lower bound}
Let $K$ be a complete discretely valued field with uniformizer $\pi$ and residue field~$k$ of characteristic $\ne 2$ such that $u(k) < \infty$. Let $i$ be any integer such that $1 \leq i \leq u(k)$, and let $q \simeq q_1 \perp \pi \cdot q_2$ be any $(i,1)$-realizing form over $K$, where the entries of $q_1$ and $q_2$ are all units in the valuation ring of $K$. Then $\dim q_1, \dim q_2 \geq m_{i,1}(k)$.
\end{lemma}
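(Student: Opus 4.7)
The plan is to apply Springer's Theorem to two carefully chosen one-dimensional-slot extensions of $q$ and then recognize that each residue form $\overline{q}_1$ and $\overline{q}_2$ inherits the $(i,1)$-realizing property from $q$. Since $q$ is $(i,1)$-realizing, it is anisotropic over $K$, so Springer's Theorem immediately gives that both $\overline{q}_1$ and $\overline{q}_2$ are anisotropic over $k$.

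To show $\dim q_1 \geq m_{i,1}(k)$, the idea is to prove that $\overline{q}_1$ itself is an $(i,1)$-realizing form over $k$, and then invoke the minimality of $m_{i,1}(k)$. Given an arbitrary $i$-dimensional form $\overline{\sigma}_i$ over $k$, I would choose a lift $\sigma_i$ over $K$ all of whose entries are units in $\mathcal{O}_K$. Then $q \perp \sigma_i \simeq (q_1 \perp \sigma_i) \perp \pi \cdot q_2$ is isotropic over $K$ by the $(i,1)$-realizing hypothesis on $q$, so Springer's Theorem forces one of the two residue forms to be isotropic. Since $\overline{q}_2$ is anisotropic, it must be that $\overline{q}_1 \perp \overline{\sigma}_i$ is isotropic over $k$. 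A short check---taking $\overline{\sigma}_i$ to be anisotropic, which is possible because $i \leq u(k)$---rules out the degenerate possibility $\dim q_1 = 0$. Hence $\overline{q}_1$ is a positive-dimensional anisotropic form every $i$-dimensional orthogonal extension of which is isotropic, which is the definition of $(i,1)$-realizing, so $\dim q_1 = \dim \overline{q}_1 \geq m_{i,1}(k)$.

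The argument for $\dim q_2 \geq m_{i,1}(k)$ is entirely symmetric: given an $i$-dimensional form $\overline{\sigma}_i$ over $k$, lift to a unit-entry form $\widetilde{\sigma}_i$ and apply the $(i,1)$-realizing hypothesis to $q \perp \pi \cdot \widetilde{\sigma}_i \simeq q_1 \perp \pi \cdot (q_2 \perp \widetilde{\sigma}_i)$. Springer's Theorem together with the anisotropy of $\overline{q}_1$ then forces $\overline{q}_2 \perp \overline{\sigma}_i$ to be isotropic over $k$, and the same anisotropic-$\overline{\sigma}_i$ check shows $\dim q_2 \geq 1$, so $\overline{q}_2$ is $(i,1)$-realizing and the bound follows. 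The only point requiring any real care is the choice of whether to place the lift of $\overline{\sigma}_i$ in the unit slot or the $\pi$-scaled slot, so that Springer's Theorem isolates exactly the residue form whose dimension is to be bounded; once this is arranged, no further obstacle arises.
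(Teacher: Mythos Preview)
Your proposal is correct and follows essentially the same approach as the paper. The only cosmetic difference is that the paper argues by contradiction (assuming $\dim q_1 < m_{i,1}(k)$, finding a single $\overline{\sigma}_i$ with $\overline{q}_1 \perp \overline{\sigma}_i$ anisotropic, and lifting to contradict the $(i,1)$-realizing property of $q$), whereas you prove the contrapositive directly by showing that $\overline{q}_1$ is itself an $(i,1)$-realizing form over $k$; the underlying use of Springer's Theorem and of unit-entry lifts is identical.
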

\begin{proof}
    We first observe that since the form $q$ is anisotropic over $K$, both residue forms $\overline{q}_1, \overline{q}_2$ are anisotropic over $k$ by Springer's Theorem. Next, by contradiction, assume the lemma is false, and without loss of generality, assume $\dim q_1 < m_{i,1}(k)$. 

    The form $\overline{q}_1$ is anisotropic over $k$ with $\dim \overline{q}_1 < m_{i,1}(k)$, so there must be an $i$-dimensional quadratic form $\overline{\sigma}_i$ over $k$ such that $\overline{q}_1 \perp \overline{\sigma}_i$ is anisotropic over $k$. Let $\sigma_i$ be any lift of $\overline{\sigma}_i$ to $K$. Then since $\overline{q}_2$ is anisotropic over $k$, it follows that $q \perp \sigma_i \simeq (q_1 \perp \sigma_i) \perp \pi \cdot q_2$ is anisotropic over~$K$. This is a contradiction of $q$ being an $(i,1)$-realizing form over $K$, so the proof is complete.
\end{proof}

\begin{lemma}
    \label{lower bound for m_{i,1} for small i}
Let $K$ be a complete discretely valued field with residue field $k$ of characteristic $\ne 2$ such that $u(k) < \infty$. If $i$ is any integer such that $1 \leq i \leq u(k)$, then 
\[
    m_{i,1}(K) \geq \min_{1 \leq r \leq i} \{m_{r,1}(k) + m_{i-r+1,1}(k)\}.
\]
\end{lemma}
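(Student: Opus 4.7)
The plan is to choose an $(i,1)$-realizing form $q$ over $K$ with $\dim q = m_{i,1}(K)$ and extract the bound from its Springer decomposition. By Springer's Theorem, I write $q \simeq q_1 \perp \pi \cdot q_2$, where $\pi$ is a uniformizer of $K$ and the entries of $q_1, q_2$ are all units in the valuation ring of $K$. Since $1 \leq i \leq u(k)$, Lemma \ref{preliminary lower bound} forces $\dim q_1, \dim q_2 \geq m_{i,1}(k) \geq 1$, so in particular both $q_1$ and $q_2$ are nonzero.

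Next, I would use Lemma \ref{initial inequalities}(a) to observe that the finite sequence $m_{1,1}(k) \geq m_{2,1}(k) \geq \cdots \geq m_{i,1}(k)$ is non-increasing, hence the set $\{r : 1 \leq r \leq i,\ \dim q_1 \geq m_{r,1}(k)\}$ is upward closed and contains $r = i$. Let $r^*$ be its smallest element, so $1 \leq r^* \leq i$, and by minimality either $r^* = 1$ or $\dim q_1 < m_{r^*-1,1}(k)$. The whole strategy is that this cutoff $r^*$ is exactly the right index at which to split: it records the largest amount of "$(r,1)$-realizing information" that $q_1$ already carries, so that $q_2$ must supply the remaining $i - r^* + 1$ dimensions worth.

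I would then split into two cases. If $r^* = 1$, then $\dim q_1 \geq m_{1,1}(k)$ while Lemma \ref{preliminary lower bound} gives $\dim q_2 \geq m_{i,1}(k) = m_{i-r^*+1,1}(k)$. If $r^* \geq 2$, then $2 \leq r^* \leq i \leq u(k)$ and $1 \leq \dim q_1 < m_{r^*-1,1}(k)$, so Lemma \ref{compensating dimension} applied with $s = r^*$ produces $\dim q_2 \geq m_{i-r^*+1,1}(k)$. In either case,
\[
    m_{i,1}(K) = \dim q_1 + \dim q_2 \geq m_{r^*,1}(k) + m_{i-r^*+1,1}(k) \geq \min_{1 \leq r \leq i}\{m_{r,1}(k) + m_{i-r+1,1}(k)\},
\]
which is the desired inequality.

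I do not anticipate any real obstacle: Lemmas \ref{preliminary lower bound} and \ref{compensating dimension} have been set up precisely to feed these two cases, and the monotonicity of $r \mapsto m_{r,1}(k)$ provided by Lemma \ref{initial inequalities}(a) makes the choice of $r^*$ essentially forced. The only mild subtlety is checking that the hypotheses of Lemma \ref{compensating dimension} ($s \leq u(k)$ and $\dim q_1, \dim q_2 \geq 1$) are met, but both follow immediately from $i \leq u(k)$ and Lemma \ref{preliminary lower bound}.
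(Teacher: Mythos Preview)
Your proposal is correct and follows essentially the same approach as the paper: Springer-decompose a minimal $(i,1)$-realizing form, invoke Lemma~\ref{preliminary lower bound} to bound both pieces below by $m_{i,1}(k)$, locate the index where $\dim q_1$ sits in the chain $m_{1,1}(k) \geq \cdots \geq m_{i,1}(k)$, and apply Lemma~\ref{compensating dimension}. Your organization is in fact slightly tidier than the paper's---by defining $r^*$ directly you avoid the paper's separate treatment of $i = 1$ and its WLOG reduction via scaling by $\pi$, and in the case $r^* = 1$ you land directly on the term $m_{1,1}(k) + m_{i,1}(k)$ rather than $2m_{1,1}(k)$---but the substance is identical.
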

\begin{proof}
    If $i = 1$, then $m_{i,1}(K) = m(K) = m(k) + m(k) = \min_{1 \leq r \leq i}\{m_{r,1}(k) + m_{i-r+1,1}(k)\}$. 
    
    Now assume $i \geq 2$ (which implies $u(k) \geq 2$). We will prove the claim by showing that any $(i,1)$-realizing form $q$ over $K$ must have $\dim q \geq \min_{1 \leq r \leq i} \{m_{r,1}(k) + m_{i-r+1,1}(k)\}$. To that end, let~$\pi$ be a uniformizer for $K$ and let $q \simeq q_1 \perp \pi \cdot q_2$ be any $(i,1)$-realizing form over $K$, where the entries of~$q_1$ and $q_2$ are all units in the valuation ring of $K$. Then by Lemma~\ref{preliminary lower bound}, $\dim q_1, \dim q_2 \geq m_{i,1}(k)$. Furthermore, because $q$ is an $(i,1)$-realizing form, the form $\pi \cdot q$ is also an $(i,1)$-realizing form. So, without loss of generality, we may assume $\dim q_1 \leq \dim q_2$. We now consider two cases for~$\dim q_1$.

    First, assume that $\dim q_1 \geq m_{1,1}(k)$. Then $\dim q_2 \geq m_{1,1}(k)$, and since $m_{t,1}(k) \geq m_{t',1}(k)$ for any integers $t' \geq t \geq 1$ (Lemma \ref{initial inequalities}(a)), we have $\dim q \geq 2m_{1,1}(k) \geq \min_{1 \leq r \leq i}\{m_{r,1}(k) +$~$ m_{i-r+1,1}(k)\}$.

    Next, assume $\dim q_1 < m_{1,1}(k)$. Then because $\dim q_1 \geq m_{i,1}(k)$, there must be some integer~$s$ such that $2 \leq s \leq i$ and $m_{s,1}(k) \leq \dim q_1 < m_{s-1,1}(k)$. By Lemma \ref{compensating dimension} we conclude that $\dim q_2 \geq m_{i-s+1,1}(k)$. Thus $\dim q \geq m_{s,1}(k) + m_{i-s+1,1}(k) \geq \min_{1 \leq r \leq i}\{m_{r,1}(k) + m_{i-r+1,1}(k)\}$.

    In both of the above cases, $\dim q \geq \min_{1 \leq r \leq i} \{m_{r,1}(k) + m_{i-r+1,1}(k)\}$, so the proof is complete.
\end{proof}
\begin{prop}
    \label{m_{i,1} of cdvf for small i}
Let $K$ be a complete discretely valued field with residue field $k$ of characteristic $\ne 2$ such that $u(k) < \infty$. If $i$ is any integer such that $1 \leq i \leq u(k)$, then
\[
    m_{i,1}(K) = \min_{1 \leq r \leq i}\{m_{r,1}(k) + m_{i-r+1,1}(k)\}.
\]
\end{prop}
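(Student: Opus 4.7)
The plan is simply to combine the two preceding lemmas, which between them prove both inequalities. On the upper bound side, Lemma~\ref{upper bound for m_{i,1} for small i} tells us that for every integer $r$ with $1 \leq r \leq i$ we have
\[
    m_{i,1}(K) \leq m_{r,1}(k) + m_{i-r+1,1}(k).
\]
Since this holds for every such $r$, we may take the minimum over $1 \leq r \leq i$ on the right-hand side to obtain
\[
    m_{i,1}(K) \leq \min_{1 \leq r \leq i} \{m_{r,1}(k) + m_{i-r+1,1}(k)\}.
\]

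The reverse inequality is exactly the statement of Lemma~\ref{lower bound for m_{i,1} for small i}. So the proposition follows immediately by combining these two bounds.

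Since both directions have already been established, there is no remaining obstacle; the work is genuinely contained in the two lemmas. It is worth noting in the write-up that the hypothesis $u(k) < \infty$ guarantees (via Lemma~\ref{upper bound on m_{i,j}}) that $m_{r,1}(k) < \infty$ for all $1 \leq r \leq u(k)$, so that the minimum on the right-hand side is finite and the displayed equality makes sense without any convention about $\infty$. Apart from this small bookkeeping remark, the proof is a one-line citation of the two lemmas.
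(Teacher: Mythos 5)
Your proof is correct and is essentially identical to the paper's own proof: both combine Lemma~\ref{upper bound for m_{i,1} for small i} (taking the minimum over $r$) with Lemma~\ref{lower bound for m_{i,1} for small i}. The additional remark about finiteness via Lemma~\ref{upper bound on m_{i,j}} is a sensible observation, and the paper makes the same point in the paragraph introducing this subsection.
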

\begin{proof}
    By Lemma \ref{upper bound for m_{i,1} for small i}, for any integer $r$ such that $1 \leq r \leq i$, $m_{i,1}(K) \leq m_{r,1}(k) + m_{i-r+1,1}(k)$. Therefore $m_{i,1}(K) \leq \min_{1 \leq r \leq i}\{m_{r,1}(k) + m_{i-r+1,1}(k)\}$. The opposite inequality was shown in Lemma \ref{lower bound for m_{i,1} for small i}, so we have the desired equality.
\end{proof}
\begin{remark}
\label{what if m=u, small i}
    Let $K$ be a complete discretely valued field with residue field $k$ of characteristic~$\ne 2$ with $m(k) = u(k) < \infty$. Then $m(K) = u(K) < \infty$ and for any $1 \leq i \leq u(k)$, $m_{i,1}(K) =$~$m(K) + 1 - i$ by Theorem \ref{upper and lower bounds}. Furthermore, Theorem \ref{upper and lower bounds} also implies that for any $1 \leq r \leq i \leq u(k)$, $m_{r,1}(k) = m(k) + 1 - r$ and $m_{i-r+1,1}(k) = m(k) + r - i$. Thus for any $1 \leq r \leq i \leq u(k)$,
    \[
        m_{r,1}(k) + m_{i-r+1,1}(k) = (m(k) + 1 - r) + (m(k) + r - i) = 2m(k) + 1 - i = m(K) + 1 - i.
    \]
    So $\min_{1 \leq r \leq i}\{m_{r,1}(k) + m_{i-r+1,1}(k)\} = m(K) + 1 - i$.
\end{remark}
In contrast with Remark \ref{what if m=u, small i}, the next example shows that in the context of Proposition \ref{m_{i,1} of cdvf for small i}, if $m(k) < u(k)$, then the quantities used to compute $m_{i,1}(K)$ may not all be equal.
\begin{example}
    Let $K$ be a complete discretely valued field with residue field $k$ of characteristic $\ne 2$ such that $m(k) = 6$, $u(k) = 8$, and $m_{5,1}(k) = 2$ (this is the smallest possible value of $m_{5,1}(k)$ by Corollary \ref{decreasing i to 1}). For such a field $k$ we have $m_{8,1}(k) = 1$, $m_{i,1}(k) = 2$ for $i = 6, 7$, and $m_{i,1}(k) = 7-i$ for $1 \leq i \leq 5$. Hence $m_{1,1}(k) + m_{8,1}(k) = m_{2,1}(k) + m_{7,1}(k) =$~$7$, $m_{3,1}(k) + m_{6,1}(k) = 6$, and $m_{4,1}(k) + m_{5,1}(k) = 5$. Therefore Proposition \ref{m_{i,1} of cdvf for small i} implies that $m_{8,1}(K) = 5$.
\end{example}

We will now focus on computing $m_{i,1}(K)$ for integers $i$ such that $u(k) < i < u(K)$. This computation will be carried out in the next two lemmas.
\begin{lemma}
    \label{upper bound for m_{i,1} for large i}
Let $K$ be a complete discretely valued field with residue field $k$ of characteristic $\ne 2$ such that $u(k) < \infty$. If $i$ is any integer such that $u(k) < i < u(K)$, then
\[
    m_{i,1}(K) \leq \min_{\lceil \frac{i}{2} \rceil \leq s \leq u(k)}\{m_{s, 1}(k) + m_{i-s+1,1}(k), m_{i-u(k), 1}(k)\}.
\]
\end{lemma}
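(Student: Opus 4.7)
The plan is to establish each of the two types of bounds in the statement by explicit construction of $(i,1)$-realizing forms over $K$. The first family, indexed by $s \in [\lceil i/2 \rceil, u(k)]$, is handled exactly as in Lemma \ref{upper bound for m_{i,1} for small i}. The second bound $m_{i-u(k),1}(k)$ requires a slightly different construction that exploits the fact that forms over $k$ of dimension greater than $u(k)$ are automatically isotropic, so one ``side'' of the Springer decomposition doesn't need to be fed through a realizing form at all.

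For the first family, fix $s$ with $\lceil i/2 \rceil \leq s \leq u(k)$. The hypothesis $u(k) < i < 2u(k)$ together with $s \leq u(k)$ forces $1 \leq i-s+1 \leq u(k)$, so both $m_{s,1}(k)$ and $m_{i-s+1,1}(k)$ are finite and minimal $(s,1)$- and $(i-s+1,1)$-realizing forms $\overline{q}_1, \overline{q}_2$ exist over $k$. Taking unit-entry lifts $q_1, q_2$ to $K$ and setting $q = q_1 \perp \pi \cdot q_2$ for a uniformizer $\pi$, anisotropy of $q$ is immediate from Springer's Theorem. Given any $i$-dimensional form $\sigma_i \simeq \sigma_{i,1} \perp \pi \cdot \sigma_{i,2}$ over $K$ with unit entries, one splits into cases on whether $\dim \sigma_{i,1} \geq s$ or $\dim \sigma_{i,1} < s$: in the former, an $s$-dimensional subform argument and the realizing property of $\overline{q}_1$ give isotropy of $\overline{q}_1 \perp \overline{\sigma}_{i,1}$; in the latter, $\dim \sigma_{i,2} \geq i-s+1$ and the realizing property of $\overline{q}_2$ gives isotropy of $\overline{q}_2 \perp \overline{\sigma}_{i,2}$. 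Springer then yields $i_W(q \perp \sigma_i) \geq 1$, so $q$ is $(i,1)$-realizing of dimension $m_{s,1}(k) + m_{i-s+1,1}(k)$. This argument is structurally identical to the proof of Lemma \ref{upper bound for m_{i,1} for small i}.

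For the second bound, let $\overline{q}$ be an anisotropic $(i-u(k),1)$-realizing form over $k$ of minimal dimension $m_{i-u(k),1}(k)$, which exists since $1 \leq i-u(k) \leq u(k)-1 < u(k)$ ensures finiteness via Lemma \ref{upper bound on m_{i,j}}. Let $q$ be a unit-entry lift of $\overline{q}$ to $K$; then $q$ is anisotropic. For an $i$-dimensional $\sigma_i \simeq \sigma_{i,1} \perp \pi \cdot \sigma_{i,2}$ with $a = \dim \sigma_{i,1}$ and $b = \dim \sigma_{i,2} = i-a$, consider cases on $a$ versus $i - u(k)$. If $a \geq i - u(k)$, then the realizing property of $\overline{q}$ applied to any $(i-u(k))$-dimensional subform of $\overline{\sigma}_{i,1}$ produces isotropy of $\overline{q} \perp \overline{\sigma}_{i,1}$. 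If instead $a < i - u(k)$, then $b > u(k)$, so $\overline{\sigma}_{i,2}$ is itself isotropic since its dimension exceeds $u(k)$. Either case produces isotropy of $q \perp \sigma_i$ via Springer, so $q$ is an $(i,1)$-realizing form over $K$ of dimension $m_{i-u(k),1}(k)$.

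Combining both constructions gives the stated upper bound. I do not anticipate any real obstacle: the only new ingredient beyond the methods of Lemma \ref{upper bound for m_{i,1} for small i} is the automatic isotropy of a residue form of dimension exceeding $u(k)$, which is what allows the second construction to produce a form of dimension as small as $m_{i-u(k),1}(k)$ rather than the naive sum $u(k) + m_{i-u(k),1}(k)$. The main care is in verifying that the relevant invariants over $k$ are finite, which is immediate from the range $u(k) < i < 2u(k)$.
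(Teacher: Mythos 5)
Your proposal is correct and follows essentially the same approach as the paper: the paper identifies which term achieves the minimum $m_*$ and then constructs an $(i,1)$-realizing form of that dimension, using the two-residue-form construction from Lemma \ref{upper bound for m_{i,1} for small i} for the $m_{s,1}(k)+m_{i-s+1,1}(k)$ terms, and the single-lift construction (with the observation that a residue form of dimension $> u(k)$ is automatically isotropic) for the $m_{i-u(k),1}(k)$ term. Presenting the constructions for all $s$ and then taking the minimum, as you do, is the same argument phrased in the other order.
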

\begin{proof}
    Let $m_* = \min_{\lceil \frac{i}{2} \rceil \leq s \leq u(k)}\{m_{s, 1}(k) + m_{i-s+1,1}(k), m_{i-u(k), 1}(k)\}$. To prove the lemma, we will construct an $m_*$-dimensional $(i,1)$-realizing form over $K$.

    First, suppose that $m_* = m_{i-u(k), 1}(k)$. Let $\overline{q}_1$ be an $m_*$-dimensional $(i-u(k), 1)$-realizing form over $k$, and let $q_1$ be a lift of $\overline{q}_1$ to $K$. Let $\sigma_i \simeq \sigma_{i,1} \perp \pi \cdot \sigma_{i,2}$ be an arbitrary $i$-dimensional quadratic form over $K$, where~$\pi$ is a uniformizer for $K$ and the entries of $\sigma_{i,1}$ and $\sigma_{i,2}$ are all units in the valuation ring of $K$, and consider the form $q_1 \perp \sigma_i$ over $K$. If $\dim \sigma_{i,2} > u(k)$, then $\overline{\sigma}_{i,2}$ is isotropic over~$k$ by the definition of $u(k)$. This then implies $\sigma_{i,2}$ is isotropic over $K$, hence $q_1 \perp \sigma_i$ is isotropic over $K$ as well. If $\dim \sigma_{i,2} \leq u(k)$, then $\dim \sigma_{i,1} = i - \dim \sigma_{i,2} \geq i-u(k)$, so $\overline{q}_1 \perp \overline{\sigma}_{i,1}$ is isotropic over~$k$ by our choice of $\overline{q}_1$. Thus $q_1 \perp \sigma_i$ is isotropic over $K$. Since $\overline{q}_1$ is anisotropic over~$k$, $q_1$ is anisotropic over $K$, so $q_1$ is an $m_*$-dimensional $(i,1)$-realizing form over $K$.

    Now suppose $m_* = m_{s_*,1}(k) + m_{i-s_*+1,1}(k)$ for some $\lceil \frac{i}{2} \rceil \leq s_* \leq u(k)$. Let $\overline{q}_1$ be an $m_{s_*,1}(k)$-dimensional $(s_*,1)$-realizing form over $k$, and let $\overline{q}_2$ be an $m_{i-s_*+1,1}(k)$-dimensional $(i - s_* + 1, 1)$-realizing form over $k$. Let $q_1, q_2$ be lifts of $\overline{q}_1, \overline{q}_2$, respectively, to $K$, and let $q = q_1 \perp \pi \cdot q_2$. Both~$\overline{q}_1$ and $\overline{q}_2$ are anisotropic over $k$, so $q$ is anisotropic over $K$. Now let $\sigma_i \simeq \sigma_{i,1} \perp \pi \cdot \sigma_{i,2}$ be any $i$-dimensional quadratic form over $K$, where the entries of $\sigma_{i,1}$ and $\sigma_{i,2}$ are all units in the valuation ring of $K$. If $\dim \sigma_{i,1} \geq s_*$, then by our choice of $\overline{q}_1$, the form $\overline{q}_1 \perp \overline{\sigma}_{i,1}$ is isotropic over $k$, hence $q \perp \sigma_i$ is isotropic over $K$. If $\dim \sigma_{i,1} < s_*$, then $\dim \sigma_{i,2} = i - \dim \sigma_{i,1} \geq i - s_* + 1$. So by our choice of~$\overline{q}_2$, the form $\overline{q}_2 \perp \overline{\sigma}_{i,2}$ is isotropic over $k$, thus $q \perp \sigma_i$ is isotropic over $K$. Since $\sigma_i$ was arbitrary, we conclude that the $m_*$-dimensional form $q = q_1 \perp \pi \cdot q_2$ is an $(i,1)$-realizing form over~$K$, and the proof is complete.
\end{proof}
\begin{lemma}
    \label{lower bound for m_{i,1} for large i}
Let $K$ be a complete discretely valued field with residue field $k$ of characteristic $\ne 2$ such that $u(k) < \infty$. If $i$ is any integer such that $u(k) < i < u(K)$, then
\[
    m_{i,1}(K) \geq \min_{\lceil \frac{i}{2} \rceil \leq s \leq u(k)}\{m_{s,1}(k) + m_{i-s+1,1}(k), m_{i-u(k), 1}(k)\}.
\]
\end{lemma}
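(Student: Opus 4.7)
The plan is to take any $(i,1)$-realizing form $q$ over $K$ and show that $\dim q$ is bounded below by the quantity $M$ on the right-hand side. Using Springer's Theorem, I write $q \simeq q_1 \perp \pi \cdot q_2$ with $\pi$ a uniformizer and the entries of $q_1, q_2$ units in $\mathcal{O}_K$. Since $q$ is anisotropic, both residue forms $\overline{q}_1, \overline{q}_2$ are anisotropic over $k$, so $\dim q_j \leq u(k)$ for $j = 1, 2$. Because $\pi \cdot q \simeq q_2 \perp \pi \cdot q_1$ is also an $(i,1)$-realizing form, I may assume without loss of generality that $\dim q_1 \leq \dim q_2$. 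Throughout, set $f(s) := m_{s,1}(k) + m_{i-s+1,1}(k)$, which is symmetric under $s \leftrightarrow i-s+1$; the hypothesis $i < u(K) = 2u(k)$ guarantees $\lceil i/2 \rceil \leq u(k)$, so $f(\lceil i/2 \rceil)$ is always among the values contributing to $M$.

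First, if $\dim q_1 = 0$, then $q = \pi \cdot q_2$. Pick an anisotropic $u(k)$-dimensional form $\overline{\varphi}$ over $k$ (which exists since $u(k) < \infty$) and a lift $\varphi$ to $K$. For any $(i-u(k))$-dimensional form $\overline{\sigma}$ over $k$ with lift $\sigma$, the test form $\varphi \perp \pi \cdot \sigma$ gives $q \perp (\varphi \perp \pi \cdot \sigma) \simeq \varphi \perp \pi \cdot (q_2 \perp \sigma)$; Springer's Theorem and the anisotropy of $\overline{\varphi}$ force $\overline{q}_2 \perp \overline{\sigma}$ to be isotropic over $k$. Hence $\overline{q}_2$ is an $(i-u(k),1)$-realizing form, so $\dim q = \dim q_2 \geq m_{i-u(k),1}(k) \geq M$.

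Otherwise $\dim q_1 \geq 1$. If $\dim q_1 \geq m(k) = m_{1,1}(k)$, then the WLOG yields $\dim q_2 \geq m(k)$ as well, so $\dim q \geq 2m(k)$. Monotonicity of $m_{\cdot,1}(k)$ (Lemma \ref{initial inequalities}(a)) gives $f(\lceil i/2 \rceil) \leq 2m(k)$, so $\dim q \geq 2m(k) \geq f(\lceil i/2 \rceil) \geq M$. In the remaining subcase $1 \leq \dim q_1 < m(k)$, the non-increasing sequence $m_{s,1}(k)$ starts above $\dim q_1$ at $s = 1$ and drops to $1$ at $s = u(k)$, so there exists $s^* \in \{2, \ldots, u(k)\}$ with $m_{s^*,1}(k) \leq \dim q_1 < m_{s^*-1,1}(k)$; Lemma \ref{compensating dimension} then delivers $\dim q_2 \geq m_{i-s^*+1,1}(k)$, whence $\dim q \geq f(s^*)$.

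To finish, I compare $f(s^*)$ with $M$ by locating $s^*$: if $s^* \in [\lceil i/2 \rceil, u(k)]$, then $f(s^*)$ is directly one of the terms in $M$; if $i - u(k) + 1 \leq s^* < \lceil i/2 \rceil$, then $i - s^* + 1 \in [\lceil i/2 \rceil, u(k)]$ and the symmetry $f(s^*) = f(i-s^*+1)$ reduces to the previous subcase; and if $s^* \leq i - u(k)$, then $i - s^* + 1 > u(k)$ forces $m_{i - s^* + 1, 1}(k) = 1$ by Lemma \ref{upper bound on m_{i,j}}, so monotonicity yields $f(s^*) = m_{s^*,1}(k) + 1 \geq m_{i-u(k),1}(k) + 1 > M$. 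The main obstacle is this final three-way subcase analysis: Lemma \ref{compensating dimension} supplies an $s^*$ that need not lie in the index range $[\lceil i/2 \rceil, u(k)]$ used to define the minimum, and recovering the bound requires simultaneously invoking the symmetry of $f$ and the auxiliary term $m_{i-u(k),1}(k)$ in $M$ to cover the cases where $i - s^* + 1$ falls outside the range.
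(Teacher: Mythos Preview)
Your proof is correct and follows the same overall strategy as the paper: decompose $q \simeq q_1 \perp \pi \cdot q_2$ via Springer, reduce by symmetry, treat $\dim q_1 = 0$ separately, and in the remaining case invoke Lemma~\ref{compensating dimension} to bound $\dim q_2$.

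The one notable difference is the choice of WLOG normalization, and it is exactly what causes your ``main obstacle.'' You assume $\dim q_1 \leq \dim q_2$ and split at the threshold $m_{1,1}(k)$, which produces an $s^*$ ranging over all of $\{2,\ldots,u(k)\}$ and forces the final three-way comparison of $f(s^*)$ with $M$. The paper instead first proves the short claim that at least one of $\dim q_1, \dim q_2$ is $\geq m_{\lceil i/2\rceil,1}(k)$ (by testing against $\sigma_1 \perp \pi\cdot\sigma_2$ with each $\overline{\sigma}_j$ of dimension $\lceil i/2\rceil$), takes that as the WLOG on $q_2$, and then splits $\dim q_1$ at the threshold $m_{\lceil i/2\rceil,1}(k)$. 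This guarantees the resulting $s_*$ already satisfies $\lceil i/2\rceil < s_* \leq u(k)$, so $f(s_*)$ lands directly inside the index set defining $M$ and no further subcase analysis is needed. Your symmetry and extra-term arguments are valid, but the paper's preliminary claim buys a cleaner endgame.
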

\begin{proof}
    To prove the lemma, we will show that if $q$ is any $(i,1)$-realizing form over $K$, then either $\dim q \geq m_{i-u(k),1}(k)$ or $\dim q \geq m_{s_*,1}(k) + m_{i-s_*+1,1}(k)$ for some $\lceil \frac{i}{2} \rceil \leq s_* \leq u(k)$. 

    To that end, let $q \simeq q_1 \perp \pi \cdot q_2$ be any $(i,1)$-realizing form over $K$, where $\pi$ is a uniformizer for~$K$ and the entries of $q_1$ and $q_2$ are all units in the valuation ring of $K$. We note here that since~$q$ is anisotropic over $K$, both residue forms $\overline{q}_1, \overline{q}_2$ are anisotropic over $k$ by Springer's Theorem.

    We first claim that $\dim q_1 \geq m_{\lceil \frac{i}{2} \rceil, 1}(k)$ or $\dim q_2 \geq m_{\lceil \frac{i}{2} \rceil, 1}(k)$. Indeed, by contradiction, assume $\dim q_1, \dim q_2 < m_{\lceil \frac{i}{2} \rceil, 1}(k)$. Then since $\overline{q}_1$ and $\overline{q}_2$ are anisotropic over $k$, this assumption on the forms' dimensions implies that there must be quadratic forms $\overline{\sigma}_1, \overline{\sigma}_2$ over $k$, both with dimension~$\lceil \frac{i}{2} \rceil$, such that $\overline{q}_1 \perp \overline{\sigma}_1$ and $\overline{q}_2 \perp \overline{\sigma}_2$ are anisotropic over $k$. Then for lifts $\sigma_1, \sigma_2$ of $\overline{\sigma}_1, \overline{\sigma}_2$, respectively, to~$K$, the form $q \perp (\sigma_1 \perp \pi \cdot \sigma_2) \simeq (q_1 \perp \sigma_1) \perp \pi \cdot (q_2 \perp \sigma_2)$ is anisotropic over $K$. However, the form $\sigma_1 \perp \pi \cdot \sigma_2$ has dimension $2 \cdot \lceil \frac{i}{2} \rceil \geq i$, so this is a contradiction of $q$ being an $(i,1)$-realizing form over $K$. This proves the claim.
    
    So, without loss of generality, we may assume that $\dim q_2 \geq m_{\lceil \frac{i}{2} \rceil, 1}(k)$. We now consider three cases for $\dim q_1$.

    First, suppose $\dim q_1 \geq m_{\lceil \frac{i}{2} \rceil, 1}(k)$. By Lemma \ref{initial inequalities}(a), we have $m_{\lceil \frac{i}{2} \rceil, 1}(k) \geq m_{i-\lceil \frac{i}{2} \rceil + 1, 1}(k)$ since $\lceil \frac{i}{2} \rceil \leq i - \lceil \frac{i}{2} \rceil + 1$. Therefore, since $\dim q = \dim q_1 + \dim q_2$ and $\dim q_1, \dim q_2 \geq m_{\lceil \frac{i}{2} \rceil, 1}(k)$, we have $\dim q \geq m_{\lceil \frac{i}{2} \rceil, 1}(k) + m_{i - \lceil \frac{i}{2} \rceil + 1, 1}(k)$, completing the proof in this case.

    Next, suppose $\dim q_1 = 0$. If $\overline{\sigma}_{u(k)}$ is any anisotropic $u(k)$-dimensional form over $k$ and~$\sigma_{u(k)}$ is any lift of $\overline{\sigma}_{u(k)}$ to $K$, then the form $q \perp \sigma_{u(k)} \simeq \sigma_{u(k)} \perp \pi \cdot q_2$ is anisotropic over $K$. Because~$q$ is an $(i,1)$-realizing form over $K$, this then implies that for any $(i-u(k))$-dimensional form $\overline{\sigma}_{i-u(k)}$ over~$k$, the quadratic form $\overline{q}_2 \perp \overline{\sigma}_{i-u(k)}$ must be isotropic over $k$. Thus $\overline{q}_2$ is an $(i-u(k), 1)$-realizing form over $k$, hence $\dim q = \dim q_2 = \dim \overline{q}_2 \geq m_{i-u(k), 1}(k)$. This completes the proof in this case.

    Finally, suppose $1 \leq \dim q_1 < m_{\lceil \frac{i}{2} \rceil, 1}(k)$. We note here that since $m_{u(k), 1}(k) = 1$, this assumption on $\dim q_1$ implies that $\lceil \frac{i}{2} \rceil < u(k)$, hence $u(k) \geq 2$. We also conclude that there is some integer~$s_*$ such that $\lceil \frac{i}{2} \rceil < s_* \leq u(k)$ and $m_{s_*,1}(k) \leq \dim q_1 < m_{s_*-1,1}(k)$. Since $i > u(k) \geq 2$, it follows that $i \geq 2$ and $2 \leq s_* \leq u(k)$, so Lemma \ref{compensating dimension} implies that $\dim q_2 \geq m_{i-s_*+1,1}(k)$. Therefore $\dim q = \dim q_1 + \dim q_2 \geq m_{s_*,1}(k) + m_{i-s_*+1,1}(k)$, and the proof is complete.
\end{proof}

Combining Lemmas \ref{upper bound for m_{i,1} for large i} and \ref{lower bound for m_{i,1} for large i} proves the following result.
\begin{prop}
    \label{m_{i,1} of cdvf for large i}
Let $K$ be a complete discretely valued field with residue field $k$ of characteristic $\ne 2$ such that $u(k) < \infty$. If $i$ is any integer such that $u(k) < i < u(K)$, then
\[
    m_{i,1}(K) = \min_{\lceil \frac{i}{2} \rceil \leq s \leq u(k)} \{m_{s,1}(k) + m_{i-s+1,1}(k), m_{i-u(k),1}(k)\}.
\]
\end{prop}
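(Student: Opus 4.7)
The plan is simply to combine the two inequalities established in the preceding lemmas. Lemma \ref{upper bound for m_{i,1} for large i} provides the upper bound $m_{i,1}(K) \leq \min_{\lceil i/2 \rceil \leq s \leq u(k)}\{m_{s,1}(k) + m_{i-s+1,1}(k), m_{i-u(k),1}(k)\}$, while Lemma \ref{lower bound for m_{i,1} for large i} provides the matching lower bound. Together they yield the claimed equality, and no further argument is required.

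Since the heavy lifting has already been done in the two lemmas, there is no real obstacle here; the proof proposal is just to cite them. For completeness, I would recall briefly how each side was achieved: the upper bound was obtained by constructing an explicit $(i,1)$-realizing form over $K$ of the minimal dimension, using either a single lift $q_1$ of an $(i-u(k),1)$-realizing form over $k$ (when the minimum is attained by $m_{i-u(k),1}(k)$) or a form of the shape $q_1 \perp \pi \cdot q_2$ where the residue forms are appropriately chosen $(s_*,1)$- and $(i-s_*+1,1)$-realizing forms over $k$. The lower bound was obtained by a case analysis on the dimensions of the two residue forms $\overline{q}_1, \overline{q}_2$ of an arbitrary $(i,1)$-realizing form $q \simeq q_1 \perp \pi \cdot q_2$ over $K$, invoking Lemma \ref{compensating dimension} in the critical intermediate case.

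Thus the proof I would write is just one or two sentences: apply Lemma \ref{upper bound for m_{i,1} for large i} to obtain $\leq$, apply Lemma \ref{lower bound for m_{i,1} for large i} to obtain $\geq$, and conclude.
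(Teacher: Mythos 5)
Your proposal matches the paper exactly: the paper's proof of this proposition is the single sentence ``Combining Lemmas \ref{upper bound for m_{i,1} for large i} and \ref{lower bound for m_{i,1} for large i} proves the following result,'' which is precisely what you propose.
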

\begin{remark}
    Let $K$ be a complete discretely valued field with residue field $k$ of characteristic $\ne 2$ such that $m(k) = u(k) < \infty$, and let $i$ be any integer such that $u(k) < i < u(K)$. Then $m(K) = u(K) <$~$\infty$ and $m_{i,1}(K) = m(K) + 1 - i$. Furthermore, for any $\lceil \frac{i}{2} \rceil \leq s \leq u(k)$ we have
    \[
        m_{s,1}(k) + m_{i-s+1,1}(k) = (m(k) + 1 - s) + (m(k) + s - i) = m(k) + m(k) + 1 - i = m(K) + 1 - i.
    \]
    We also have $m_{i-u(k), 1}(k) = m(k) + 1 - (i - u(k)) = m(k) + m(k) + 1 - i = m(K) + 1 - i$.
    So $\min_{\lceil \frac{i}{2} \rceil \leq s \leq u(k)}\{m_{s,1}(k) + m_{i-s+1,1}(k), m_{i-u(k),1}(k)\} = m(K) + 1 - i$.
\end{remark}
We conclude this subsection with the following example.
\begin{example}
    Let $K$ be a complete discretely valued field with residue field $k$ of characteristic $\ne 2$ such that $u(k) < \infty$ and $m(k) = 2$. Then $m_{u(k), 1}(k) = 1$ and Lemmas \ref{initial inequalities}(a) and \ref{lower bound on m_{i,1} for small i} imply that $m_{r, 1}(k) = 2$ for all $1 \leq r < u(k)$. In particular, if $u(k) < i < u(K)$, then $m_{i - u(k), 1}(k) = 2$. So by Lemma \ref{upper bound on m_{i,j}} and Propositions \ref{m_{i,1} of cdvf for small i} and \ref{m_{i,1} of cdvf for large i}, we conclude
    \[
        m_{i,1}(K) = \begin{cases}
            4 &\text{ if } 1 \leq i < u(k), \\
            3 &\text{ if } i = u(k), \\
            2 &\text{ if } u(k) < i < u(K), \\
            1 &\text{ if } i \geq u(K).
        \end{cases}
    \]
    From this we see that if $u(k) \geq 4$, then there are integers $i < u(K)$ such that $m_{i,1}(K) \ne m(K) + 1 - i$ (e.g., $m_{3,1}(K) = 4 \ne 2 = m(K) +1 - 3$).
\end{example}

\subsection{The refined $m$-invariants separate fields}
\label{refined m separates} 
To conclude this section, we investigate whether these refined $m$-invariants can distinguish fields that are not distinguished by the $u$- and $m$-invariant. In other words,
\begin{question}
\label{separate fields?}
    Are there fields $k_1, k_2$ with $u(k_1) = u(k_2)$, $m(k_1) = m(k_2)$, but $m_{i,j}(k_1) \ne$~$m_{i,j}(k_2)$ for some integers $i,j$?
\end{question}
We will now show that one can construct fields that give an affirmative answer to Question~\ref{separate fields?} (see Corollary \ref{m_{i,j} separates}), and note that Theorem \ref{upper and lower bounds} implies that any fields $k_1, k_2$ providing a positive answer to this question must satisfy $m(k_1) = m(k_2) < u(k_1) = u(k_2)$. We begin by proving a lemma about non-real linked fields with $m$-invariant 6 (see, e.g., \cite[p.~370]{lam} for the definition of a linked field).

\begin{lemma}
\label{linked field with m=6}
Let $k$ be a non-real linked field of characteristic $\ne 2$ such that $m(k) = 6$. Then $u(k) = 8$ and 
\[
	m_{i,1}(k) = \begin{cases}
	6 &\text{ if } 1 \leq i \leq 3, \\
	9 - i &\text{ if } 4 \leq i \leq 7, \\
	1 &\text{ if } i \geq 8.
	\end{cases}
\]
\end{lemma}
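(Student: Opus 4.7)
My plan has three components. The first is the easy fact $u(k) = 8$: by the classical Elman--Lam bound for non-real linked fields of characteristic $\ne 2$, $u(k) \in \{1, 2, 4, 8\}$, and combined with $u(k) \geq m(k) = 6$ this forces $u(k) = 8$. The second is to assemble the upper bounds on $m_{i,1}(k)$. Lemma \ref{upper bound on m_{i,j}} with $j = 1$ gives $m_{i,1}(k) \leq \max\{1, 9 - i\}$, which yields $m_{i,1}(k) = 1$ for $i \geq 8$. Since $m(k) = 6$, there exists a $6$-dimensional anisotropic universal form $q_0$ over $k$; for any $i \geq 1$ and any $i$-dimensional $\sigma_i$, taking any $1$-dimensional subform $\langle a \rangle \subseteq \sigma_i$, universality of $q_0$ makes $q_0 \perp \langle a \rangle$ isotropic and hence $q_0 \perp \sigma_i$ isotropic. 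So $q_0$ is an $(i,1)$-realizing form for every $i \geq 1$, which gives $m_{i,1}(k) \leq 6$ for all $i$. These upper bounds, together with $m_{1,1}(k) = m(k) = 6$, match the claimed values from above.

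Third, I reduce all remaining lower bounds to the single equality $m_{3,1}(k) = 6$. By Lemma \ref{initial inequalities}(a) the sequence $(m_{i,1}(k))_{i \geq 1}$ is non-increasing, and by Proposition \ref{decreasing i in m_{i,j}} (with $r = 1$), $m_{i,1}(k) \geq m_{i-1,1}(k) - 1$, so consecutive values differ by at most $1$. With $m_{1,1}(k) = 6$ and $m_{8,1}(k) = 1$ already in hand and the upper bound $m_{i,1}(k) \leq 9 - i$ for $i \leq 7$, the pattern $(6, 6, 6, 5, 4, 3, 2, 1)$ is forced as soon as $m_{3,1}(k) = 6$ is established: sandwiching gives $m_{2,1}(k) = 6$, and propagating the drop-by-at-most-one property forward from $m_{3,1}(k) = 6$ together with the upper bound $9 - i$ and the terminal value $m_{8,1}(k) = 1$ makes each of the five remaining inequalities tight.

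The principal obstacle is therefore the single lower bound $m_{3,1}(k) \geq 6$, equivalently: no anisotropic form $q$ over $k$ with $\dim q \leq 5$ is $(3,1)$-realizing. I plan to prove this via an embedding statement: every anisotropic form of dimension $\leq 5$ over $k$ is a subform of some $8$-dimensional anisotropic form $\varphi$. Given such an embedding, $\varphi = q \perp \rho$ with $\dim \rho = 8 - \dim q \geq 3$, and any $3$-dimensional subform $\sigma_3 \subseteq \rho$ gives $q \perp \sigma_3$ anisotropic as a subform of $\varphi$, so $q$ is not $(3,1)$-realizing. The embedding is where the linked-field hypotheses enter. Since $k$ is non-real linked, $I^3(k) = 0$; consequently the Witt invariant is injective on $I^2(k)$, and Witt classes over $k$ are classified by dimension parity, signed discriminant, and Witt invariant. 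Starting from an anisotropic $5$-dimensional $q$ and any $8$-dimensional anisotropic $\varphi_0$ (which exists since $u(k) = 8$), I would construct a $3$-dimensional $\rho$ whose invariants realize those of $\varphi_0 - q \in W(k)$, so that $q \perp \rho$ is Witt equivalent to $\varphi_0$; the delicate remaining step---verifying that $q \perp \rho$ is itself anisotropic, equivalently that $q$ and $-\rho$ share no represented value, so that the isometry $q \perp \rho \simeq \varphi_0$ exhibits the desired embedding---is the technical heart of the argument, where I would use $u(k) = 8$ to preclude unwanted Witt cancellations and linkedness to choose $\rho$ appropriately. The analogous (easier) cases $\dim q \in \{1,2,3,4\}$ are handled by the same strategy, using universality of $8$-dimensional anisotropic forms over $k$ to peel off entries of $\varphi_0$ one at a time.
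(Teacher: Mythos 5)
Your treatment of $u(k)=8$, the upper bounds, and the reduction of the whole array to the single inequality $m_{3,1}(k) \geq 6$ is correct and in fact a slightly cleaner organization than the paper's. But the one step you flag as "the technical heart" is precisely where the argument breaks, and the reason is that your intended input, $I^3(k)=0$, is false for these fields. Since $u(k)=8$, there are $5$-dimensional anisotropic forms over $k$, and (as the paper exploits) over a linked field every $5$-dimensional anisotropic form is a Pfister neighbor of some $3$-fold Pfister form $\varphi$, with $\varphi$ anisotropic because the $5$-dimensional form is. So an anisotropic $3$-fold Pfister form exists and $I^3(k) \neq 0$. Without $I^3(k)=0$ you lose the claimed classification of Witt classes by dimension parity, signed discriminant, and Witt invariant, and the plan to "realize the invariants of $\varphi_0 - q$ by a $3$-dimensional $\rho$ and then verify anisotropy of $q \perp \rho$" has no foundation. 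Even granting a classification, you do not carry out the anisotropy verification that you yourself identify as the crux, so the lower bound $m_{3,1}(k)\geq 6$ is not actually established.

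The paper's route to the embedding you want is much more direct and does not pass through any classification of the Witt ring: it quotes Lam, Theorem~X.4.20, that over a linked field every $5$-dimensional anisotropic form $q$ is a Pfister neighbor, i.e.\ $a\cdot q \perp \sigma_3 \simeq \varphi$ for some $3$-fold Pfister form $\varphi$ and a $3$-dimensional $\sigma_3$, with $\varphi$ anisotropic because $q$ is. Scaling back, $q \perp a\cdot\sigma_3 \simeq a\cdot\varphi$ is anisotropic of dimension $8$, which is exactly the embedding you were after, and it immediately shows no $5$-dimensional anisotropic form can be $(2,1)$- or $(3,1)$-realizing. I would recommend replacing the $I^3$-based sketch with this one appeal to the Pfister-neighbor property of linked fields; the rest of your write-up (the sandwich argument using Lemma~\ref{initial inequalities}(a), Proposition~\ref{decreasing i in m_{i,j}}, and Lemma~\ref{upper bound on m_{i,j}}) then closes the proof.
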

\begin{proof}
Since $k$ is a non-real linked field, we have $u(k) = 1, 2, 4,$ or $8$ \cite[Theorem XI.6.21]{lam}. Because $m(k) = 6$, \cite[Corollary~1.6]{m-inv} implies that $u(k) \geq 8$. Therefore $u(k) = 8$, and by Lemma \ref{upper bound on m_{i,j}} we conclude that $m_{i,1}(k) = 1$ for all $i \geq u(k) = 8$.

By assumption, $m_{1,1}(k) = m(k) = 6$, so we next consider $i = 2, 3$. We first show that there are no five-dimensional $(2,1)$- or $(3,1)$-realizing forms over $k$. Indeed, let $q$ be an arbitrary anisotropic quadratic form over $k$ with $\dim q = 5$. The field $k$ is linked, so $q$ is a Pfister neighbor \cite[Theorem~X.4.20]{lam}. Therefore there is some $a \in k^{\times}$, some 3-fold Pfister form $\varphi$ over $k$, and some three-dimensional form $\sigma_3$ over $k$ such that $a \cdot q \perp \sigma_3 \simeq \varphi$. Furthermore, $q$ is isotropic if and only if~$\varphi$ is isotropic \cite[Proof of Proposition~X.4.17]{lam}, so $\varphi$ is anisotropic over $k$ since $q$ is anisotropic over $k$. Therefore $a \cdot \varphi \simeq q \perp a \cdot \sigma_3$ is anisotropic over $k$. This implies that $q$ is not a $(2,1)$-realizing form, nor is it a $(3,1)$-realizing form over $k$.

Now, since $m_{1,1}(k) = 6$, by Lemma \ref{initial inequalities}(a) and Corollary \ref{decreasing i to 1} we conclude $5 \leq m_{2,1}(k) \leq 6$. We just showed that there are no five-dimensional $(2,1)$-realizing forms over $k$, thus $m_{2,1}(k) = 6$. This then implies that $5 \leq m_{3,1}(k) \leq 6$, and since there are no five-dimensional $(3,1)$-realizing forms over $k$, we conclude that $m_{3,1}(k) = 6$. 

Finally, consider $4 \leq i \leq 7$. Since $m_{8,1}(k) = 1$, we conclude $m_{i,1}(k) \leq m_{8,1}(k) + (8-i) = 9-i$ by Proposition \ref{decreasing i in m_{i,j}}. Next, since $m_{3,1}(k) = 6$, Proposition \ref{decreasing i in m_{i,j}} implies $m_{i,1}(k) \geq m_{3,1}(k) - (i - 3) = 9-i$. Therefore $m_{i,1}(k) = 9-i$ for $4 \leq i \leq 7$, as desired.
\end{proof}

We now construct a field $F$ of characteristic $\ne 2$ with $u(F) = 8, m(F) = 6$, and $m_{2,1}(F) = 5$ using a process very similar to the construction found in the proof of \cite[Proposition~4.3]{6-dim}. First, we recall some terminology. Let $k$ be any field of characteristic $\ne 2$, and let $q$ be a regular quadratic form over $k$ with $\dim q \geq 2$ such that $q \not\simeq \mathbb{H}$. Then the \textit{function field} of $q$, denoted by~$k(q)$, is the function field of the projective variety defined over $k$ by $q = 0$. The form $q$ is isotropic over~$k(q)$, $k(q)$ has transcendence degree $\dim q - 2$ over~$k$, and $k(q)$ is purely transcendental over $k$ if and only if $q$ is isotropic over $k$ (see, e.g., \cite[p.~463]{function fields}). For a family $\{q_{\alpha}\}$ of quadratic forms~$q_{\alpha}$ over~$k$ with $\dim q_{\alpha} \geq 3$ for all $\alpha$, the field $k(\{q_{\alpha}\})$ is the free compositum of the fields $k(q_{\alpha})$ for all~$\alpha$ \cite[p.~333]{lam}.

\begin{prop}
\label{merkurjev tower}
There is a field $F$ of characteristic $\ne 2$ such that $u(F) = 8$, $m(F) = 6$, and $m_{2,1}(F) = 5$.
\end{prop}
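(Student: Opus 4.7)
The plan is to construct $F$ as a direct limit of a Merkurjev-style tower, closely mirroring the construction of \cite[Proposition~4.3]{6-dim} but augmented with an additional family of forms to be made isotropic. The starting point is a base field $F_0$ of characteristic $\ne 2$ carrying three explicit anisotropic forms: a $5$-dimensional form $\psi_0$, a $6$-dimensional form $q_0 \simeq \psi_0 \perp \langle -a \rangle$ for some $a \in F_0^{\times}$ not represented by $\psi_0$, and an $8$-dimensional form $\rho_0 \simeq q_0 \perp \langle b_1, b_2 \rangle$. A convenient choice is $F_0 = \Q(t_0, t_1, \ldots, t_7)$ with $\psi_0 = \langle t_1, \ldots, t_5 \rangle$, $a = t_0$, and $b_1 = t_6$, $b_2 = t_7$; the genericity of the $t_i$ makes $\psi_0$, $q_0$, and $\rho_0$ anisotropic over $F_0$.

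Inductively define $F_{n+1}$ to be the free compositum of the function fields $F_n(\varphi)$ as $\varphi$ ranges over two families of anisotropic quadratic forms over $F_n$: (i) all $9$-dimensional anisotropic forms, which will force $u(F) \leq 8$, and (ii) all anisotropic forms of the shape $\psi_0 \perp \sigma$ with $\sigma$ a $2$-dimensional quadratic form over $F_n$, which will force $\psi_0$ to become a $(2,1)$-realizing form over $F := \bigcup_{n \geq 0} F_n$. Any $2$-dimensional form $\sigma$ over $F$ descends to some $F_n$, so $\psi_0 \perp \sigma$ becomes isotropic over $F_{n+1}$ and hence over $F$; specializing to $\sigma = \langle -a, -c \rangle$ shows that $q_0 \perp \langle -c \rangle$ is isotropic for every $c \in F^{\times}$, i.e., $q_0$ is universal over $F$.

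The main obstacle is the key technical claim that $\psi_0$, $q_0$, and $\rho_0$ all remain anisotropic over $F$, and that no universal anisotropic form of dimension $\leq 5$ is ever created. For the $9$-dimensional $\varphi$ of family (i), Hoffmann's theorem applies directly: since $\dim \psi_0, \dim q_0, \dim \rho_0 \leq 8 < 9$, each of these forms remains anisotropic over $F_n(\varphi)$. For the $7$-dimensional $\varphi$ of family (ii), Hoffmann's theorem does not apply to $q_0$ or $\rho_0$, so one must use finer subform-theoretic arguments in the style of \cite[Proposition~4.3]{6-dim}; the crucial input is that every form in family (ii) already has $\psi_0$ as a subform, so any hypothetical isotropic specialization of $\psi_0$, $q_0$, or $\rho_0$ over $F_n(\varphi)$ would force a coincidence with an adjoined form that can be ruled out by a dimension count on anisotropic kernels. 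Ruling out a universal anisotropic form of dimension $\leq 5$ over $F$ follows a parallel descent-and-comparison argument, analogous to the $m(F) \geq 6$ step of \cite[Proposition~4.3]{6-dim}: any hypothetical such form descends to some stage $F_n$ and is compared against the forms whose function fields have been adjoined in passing to $F_{n+1}$.

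Granting these anisotropy results, the conclusion is immediate. We have $u(F) \leq 8$ from family (i) and $u(F) \geq 8$ from anisotropy of $\rho_0$, so $u(F) = 8$. The anisotropic universal form $q_0$ gives $m(F) \leq 6$, and the non-existence of a universal anisotropic form of dimension $\leq 5$ gives $m(F) \geq 6$, so $m(F) = 6$. The form $\psi_0$ witnesses $m_{2,1}(F) \leq 5$, while Proposition~\ref{decreasing i in m_{i,j}} applied with $i = 2$ and $r = 1$ yields $m_{2,1}(F) \geq m(F) - 1 = 5$. Hence $m_{2,1}(F) = 5$, completing the proof.
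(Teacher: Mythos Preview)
Your overall architecture matches the paper's, but the proposal has a genuine gap at exactly the point you flag as ``the main obstacle,'' and the gap is not just a matter of filling in details.

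The paper's construction does not start with a generic $5$-dimensional $\psi_0$. It starts with a $6$-dimensional anisotropic form $\psi$ with $\det\psi = -1$ (an Albert form), writes $\psi \simeq \langle 1\rangle \perp \psi'$, and then adjoins function fields of the $7$-dimensional forms $\psi' \perp \langle c,d\rangle$. The determinant condition is the whole point: Merkurjev's theorem \cite[Proposition~5.6]{mer} says an anisotropic $6$-dimensional form with $\det = -1$ stays anisotropic over the function field of \emph{any} form of dimension $\geq 7$. That is the tool that protects $\psi$ (hence $\psi'$) through the family (ii) steps. Your $q_0 = \langle t_1,\ldots,t_5,-t_0\rangle$ has determinant $-t_0t_1\cdots t_5 \neq -1$, so Merkurjev's result does not apply, and there is no substitute result of the kind you gesture at. The phrases ``finer subform-theoretic arguments'' and ``dimension count on anisotropic kernels'' do not correspond to any known technique that keeps a generic $6$-dimensional form anisotropic over function fields of $7$-dimensional forms containing a common $5$-dimensional subform; indeed, without the Albert structure there is no reason to expect $q_0$ (or $\rho_0$) to survive family (ii).

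Two further points. First, the paper never tracks an $8$-dimensional witness $\rho_0$; it gets $u(F) \geq 8$ for free from $m(F)=6$ via \cite[Corollary~1.6]{m-inv}, avoiding your hardest anisotropy claim entirely. Second, the paper's proof that $m(F) > 4$ is not a generic ``descent-and-comparison'': it adjoins a fresh transcendental $X_n$ at each stage and uses the specific fact that a $5$-dimensional form becoming isotropic over the function field of a form of dimension $\geq 6$ forces both to be Pfister neighbors of a common $3$-fold Pfister form. This would make $\psi'$ a Pfister neighbor, which is equivalent (via $\det\psi' = -1$ and \cite[Proposition~X.4.19]{lam}) to $\psi$ being isotropic --- a contradiction. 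Your setup lacks both the transcendental step and the Pfister-neighbor/Albert-form link, so the $m(F) \geq 6$ argument as sketched does not go through.
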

\begin{proof}
Let $E$ be any field of characteristic $\ne 2$ over which there is an anisotropic six-dimensional quadratic form $\psi$ with $\det \psi = -1 \in E^{\times} / E^{\times 2}$ (e.g., $E = \mathbb{C}(x,y,z)$). After scaling $\psi$ by its first entry if necessary, without loss of generality, we may assume $\psi$ represents $1$. So $\psi \simeq \langle 1 \rangle \perp \psi'$ for some five-dimensional form~$\psi'$ over $E$ with $\det \psi' = -1$. We now make two observations. The first is that for any field extension $K / E$, $\det \psi_K = \det \psi'_K = -1 \in K^{\times} / K^{\times 2}$. The second is that for any field extension $K/E$, $\psi_K$ is isotropic over~$K$ if and only if $\psi'_K$ is a Pfister neighbor over $K$. Indeed, $\psi_K \simeq \langle 1 \rangle \perp \psi'_K$ is isotropic over $K$ if and only if $\psi'_K$ represents $-1$ \cite[Corollary~I.3.5]{lam}, and since $\dim \psi'_K = 5$ and $\det \psi'_K = -1$, the form $\psi'_K$ represents $-1$ if and only if $\psi'_K$ is a Pfister neighbor \cite[Proposition~X.4.19]{lam}. 

We now define fields $E_n$ for $n \geq 0$ inductively as follows. Let $\widetilde{E}_n = E_n(X_n)$ for an indeterminate~$X_n$. Let
\begin{align*}
	E_0 &= E \\
	E_{2i + 1} &= \widetilde{E}_{2i}\left(\{\varphi \mid \varphi \text{ a regular 9-dimensional quadratic form over $E_{2i}$}\}\right) \text{ for $i \geq 0$} \\
	E_{2i} &= \widetilde{E}_{2i-1}\left(\{\psi' \perp \langle c, d \rangle \mid c, d \in E_{2i-1}^{\times}\}\right) \text{ for $i \geq 1$}.
\end{align*}
Let $F = \bigcup_{n = 0}^{\infty} E_n$. For any $9$-dimensional quadratic form $\varphi$ over $F$, $\varphi$ is already defined over $E_{2i}$ for some $i \geq 0$. Therefore $\varphi$ becomes isotropic over $E_{2i+1}$, hence over $F$. Thus $u(F) \leq 8$. 

Next, recall that any anisotropic quadratic form remains anisotropic over purely transcendental extensions \cite[Lemma~IX.1.1]{lam}. Furthermore, any anisotropic quadratic form of dimension $\leq 8$ remains anisotropic over the function field of any quadratic form of dimension $\geq 9$ \cite[Theorem~1]{function fields}. We also know that any anisotropic six-dimensional quadratic form with determinant~$-1$ remains anisotropic over the function field of any quadratic form of dimension $\geq 7$ \cite[Lemma~1.1, Proposition~5.6]{mer}, \cite[Theorem~XIII.2.6]{lam}. Since $\psi$ is anisotropic over $E$, these facts then imply that $\psi$ remains anisotropic over $F$. Moreover, since $\psi$ is anisotropic over $F$, $\psi$ is anisotropic over $K$ for any intermediate field $E \subseteq K \subseteq F$. This implies that, for any intermediate field $E \subseteq K \subseteq F$, $\psi'$ is anisotropic over $K$ and, by the first paragraph of the proof, $\psi'$ is not a Pfister neighbor over $K$. 

Now let $\sigma_2 \simeq \langle c, d \rangle$ be an arbitrary two-dimensional quadratic form over $F$. Then $\sigma_2$ is already defined over $E_{2i-1}$ for some $i \geq 1$, so $\psi' \perp \sigma_2$ is isotropic over $E_{2i}$, hence isotropic over $F$. In particular, for all $a \in F^{\times}$ the form $\psi' \perp \langle 1, -a \rangle \simeq \psi \perp \langle -a \rangle$ is isotropic over $F$. Since $\psi'$ and $\psi$ are anisotropic over $F$, we conclude that $\psi$ is a $(1,1)$-realizing form over $F$, and $\psi'$ is a $(2,1)$-realizing form over $F$. Thus $m(F) \leq 6$ and $m_{2,1}(F) \leq 5$. 

As we next explain, to complete the proof it suffices to show that $m(F) > 4$. Indeed, if $m(F) > 4$, then $5 \leq m(F) \leq 6$, and since $m(F)$ cannot equal 5 \cite[1.1a), p.~194]{m-inv}, we conclude $m(F) = 6$. Next, since $m_{2,1}(F) \leq 5$ and $m(F) = 6$, Corollary \ref{decreasing i to 1} implies $m_{2,1}(F) \geq 5$, so $m_{2,1}(F) = 5$. Finally, $u(F) \leq 8$ by construction, and since $m(F) = 6$, we must have $u(F) \geq 8$ \cite[Corollary~1.6]{m-inv}, thus $u(F) = 8$. We now show that $m(F) > 4$ using essentially the same argument found in the last paragraph of the proof of \cite[Proposition~4.3]{6-dim}.

Let $q$ be any anisotropic quadratic form over $F$ with $\dim q \leq 4$. Then $q$ is already defined over~$E_n$ for some $n$ and $q$ is anisotropic over $E_n$. Next, for any anisotropic form $\rho$ over a field $k$ of characteristic $\ne 2$, one can easily show that the form $\rho_{k(t)} \perp \langle t \rangle$ is anisotropic over $k(t)$ for an indeterminate $t$. So since $q$ is anisotropic over~$E_n$ and $\widetilde{E}_n = E_n(X_n)$ for an indeterminate $X_n$, we conclude that the form $q_{\widetilde{E}_n} \perp \langle X_n \rangle$ is anisotropic over~$\widetilde{E}_n$. Now, as observed by Hoffmann in \cite[Proof of Proposition~4.3]{6-dim}, if $p$ is an anisotropic quadratic form over a field $k$, $\Char k \ne 2$, with $\dim p \leq 5$ and $p$ becomes isotropic over the function field $k(\sigma)$ of a quadratic form $\sigma$ over $k$ with $\dim \sigma \geq 6$, then $p$ is necessarily a five-dimensional Pfister neighbor of some anisotropic 3-fold Pfister form $\varphi$ over $k$. This then implies that $\sigma$ is also a Pfister neighbor of $\varphi$. So, if there is some $m > n$ such that $q_{\widetilde{E}_n} \perp \langle X_n \rangle$ becomes isotropic over $E_m$ (by our definition of $E_m$ and \cite[Theorem~1]{function fields} such an $m$ would need to be even since $\dim(q \perp \langle X_n \rangle) \leq 5$), then the form $\psi'$ is a Pfister neighbor over $E_m$. However, we saw above that $\psi'$ is not a Pfister neighbor over $K$ for any intermediate field $E \subseteq K \subseteq F$. So $q_{\widetilde{E}_n} \perp \langle X_n \rangle$ remains anisotropic over $E_m$ for all $m > n$. Therefore $q \perp \langle X_n \rangle$ is anisotropic over $F$, and since $X_n \in F^{\times}$, we conclude that $q$ is not universal over $F$. Since $q$ was an arbitrary anisotropic quadratic form over $F$ with $\dim q \leq 4$, we conclude that $m(F) > 4$, and the proof is complete.
\end{proof}

\begin{cor}
\label{m_{i,j} separates}
There are fields $k_1, k_2$ of characteristic $\ne 2$ with $u(k_1) = u(k_2)$, $m(k_1) = m(k_2)$, but $m_{i,1}(k_1) \ne m_{i,1}(k_2)$ for $i = 2,3$.
\end{cor}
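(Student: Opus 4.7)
The plan is to exhibit two concrete fields $k_1,k_2$ meeting the requirements. For $k_1$, take the field $F$ produced in Proposition~\ref{merkurjev tower}, which satisfies $u(F)=8$, $m(F)=6$, and $m_{2,1}(F)=5$. This already handles the $i=2$ invariant. For the $i=3$ invariant, note that Lemma~\ref{initial inequalities}(a) applied with $(i,i')=(2,3)$ yields
\[
    m_{3,1}(F)\leq m_{2,1}(F)=5,
\]
so in particular $m_{3,1}(F)<6$.

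For $k_2$, the plan is to take any non-real linked field of characteristic $\ne 2$ with $m$-invariant equal to $6$. Lemma~\ref{linked field with m=6} then packages everything one needs about such a field: $u(k_2)=8$, and $m_{i,1}(k_2)=6$ for both $i=2$ and $i=3$. Combining with the previous paragraph gives
\[
    u(k_1)=u(k_2)=8,\qquad m(k_1)=m(k_2)=6,
\]
while $m_{i,1}(k_1)\leq 5<6=m_{i,1}(k_2)$ for $i=2,3$, which is precisely the assertion of the corollary.

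The main obstacle is therefore producing the field $k_2$ — namely, exhibiting a non-real linked field with $m$-invariant equal to $6$. Lemma~\ref{linked field with m=6} presupposes existence, so the burden is either to cite such a field from the literature or to construct one. A natural route is to adapt the Merkurjev-style tower argument used to build $F$ in Proposition~\ref{merkurjev tower}: start over a non-real linked base field (for instance, a suitable iterated Laurent series field), then iteratively make all $9$-dimensional forms isotropic and force every anisotropic form of dimension $\leq 5$ to represent all classes, while preserving anisotropy of a fixed six-dimensional form with $\det=-1$ \emph{and} preserving linkage. The delicate point is that linkage must be transported through each stage of the tower; everything else — namely $u$, $m$, and all invariants $m_{i,1}$ — is then read off from Lemma~\ref{linked field with m=6}. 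The remaining comparisons are immediate consequences of inequalities already established in Subsection~\ref{refined m generalities}.
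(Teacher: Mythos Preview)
Your approach is the same as the paper's (with the roles of $k_1$ and $k_2$ swapped): pair the field $F$ of Proposition~\ref{merkurjev tower} against a non-real linked field of characteristic $\ne 2$ with $m$-invariant $6$, then read off the refined invariants from Lemma~\ref{linked field with m=6} and Lemma~\ref{initial inequalities}(a). The comparisons you draw are exactly those in the paper.

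The only gap is the one you yourself flag: you have not actually produced the non-real linked field with $m=6$. Your proposed fix---building a new Merkurjev-style tower while preserving linkage at every stage---is not carried out, and the ``delicate point'' you identify (transporting linkage through the tower) is a genuine obstacle you have not addressed. The paper avoids this entirely: such a field already exists in the literature, namely Hoffmann's construction in \cite[Proposition~4.3]{6-dim}, and the paper simply cites it, observing that Hoffmann's tower can be run over a non-real base of characteristic $\ne 2$ (e.g., $E=\C(x,y,z)$) so that the resulting linked field is itself non-real of characteristic $\ne 2$. Once you replace your sketched construction with this citation, your proof is complete and matches the paper's.
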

\begin{proof}
By \cite[Proposition~4.3]{6-dim} there is a linked field $k_1$ with $m(k_1) = 6$. Moreover, Hoffmann's proof shows that $k_1$ can be chosen so that $k_1$ is non-real with $\Char k_1 \ne 2$. Namely, if one takes the base field $E$ in Hoffmann's construction to be non-real of characteristic $\ne 2$ (e.g., $E = \C(x,y,z)$ and $\varphi = \langle \langle x, y \rangle \rangle \perp z \cdot \langle 1, 1 - x \rangle$), then $k_1$ is non-real of characteristic $\ne 2$ as well. By Lemma \ref{linked field with m=6}, we have $u(k_1) = 8$ and $m(k_1) = m_{2,1}(k_1) = m_{3,1}(k_1) = 6$. By Proposition \ref{merkurjev tower}, there is a field $k_2$ of characteristic $\ne 2$ such that $u(k_2) = 8$, $m(k_2) = 6$, and $m_{2,1}(k_2) = 5$. Since $m_{2,1}(k_2) = 5$, we conclude that $m_{3,1}(k_2) \leq 5$ by Lemma \ref{initial inequalities}(a). So for the fields $k_1, k_2$, we have $u(k_1) = u(k_2) = 8$, $m(k_1) = m(k_2) = 6$, but $m_{2,1}(k_1) \ne m_{2,1}(k_2)$ and $m_{3,1}(k_1) \ne m_{3,1}(k_2)$.
\end{proof}

Corollary \ref{m_{i,j} separates} shows that values of $u(k)$ and $m(k)$ for a field $k$ do not determine the values of all the invariants $m_{i,j}(k)$, thereby showing that the invariants $m_{i,j}(k)$ provide information beyond~$u(k)$ and $m(k)$.

\section{Connecting $m_{i,j}$ and $\lgp(r,s)$}
\label{connections}
In \cite{cas}, a ``going-up'' result was proven to hold for the local-global principle for isotropy \cite[Proposition~3.6]{cas}. That is, if quadratic forms over certain fields $k$ of a particular dimension~$n$ satisfy the local-global principle for isotropy with respect to some set $V$ of discrete valuations on~$k$, then all quadratic forms over $k$ of dimension $> n$ also satisfy the local-global principle for isotropy with respect to $V$. Moreover, this dimension $n$ is strongly related to the $u$-invariant of the field~$k$. In this section, we will show that an analogous ``going-down'' result holds for $\lgp(r, s)$ in terms of the refined $m$-invariant (see Theorem~\ref{going-down}).

\begin{lemma}
\label{new ce from old}
Let $k$ be a field of characteristic $\ne 2$ equipped with a non-empty set $V$ of non-trivial discrete valuations. Let $i, j \geq 1$ be positive integers, and let $n$ be a positive integer such that $n < m_{i,j}(k)$. If there is an $n$-dimensional counterexample to $\emph{LGP}(r, j)$ over $k$ with respect to $V$ for some integer $r \geq 1$, then for all integers $s$ such that $1 \leq s \leq i$ there is an $(n+s)$-dimensional counterexample to $\emph{LGP}(r, j)$ over $k$ with respect to $V$.
\end{lemma}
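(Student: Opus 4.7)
The approach is to extend the given $n$-dimensional counterexample $q$ to $\emph{LGP}(r,j)$ by orthogonally summing it with an appropriately chosen $s$-dimensional form $\sigma_s$, producing a counterexample of dimension $n+s$. The guiding principle is that orthogonally adding a form can only increase the Witt index: any isotropic subspace of $q_v$ remains isotropic in $(q \perp \sigma_s)_v$, so the local condition $i_W(q_v) \geq r$ is automatically preserved by any enlargement. The substantive content is therefore to ensure that the enlargement's \emph{global} Witt index stays strictly below $j$.

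For this, I would exploit the hypothesis $n < m_{i,j}(k)$ directly. Since $i_W(q) < j$ (as $q$ is a counterexample) and $\dim q < m_{i,j}(k)$, the form $q$ fails to be an $(i,j)$-realizing form, and the only defining condition that can fail is the second one: there must exist some $i$-dimensional quadratic form $\sigma_i$ over $k$ such that $i_W(q \perp \sigma_i) < j$. For each $s$ with $1 \leq s \leq i$, I then take $\sigma_s$ to be any $s$-dimensional subform of $\sigma_i$ (for instance, via a diagonalization $\sigma_i \simeq \langle a_1, \ldots, a_i \rangle$, set $\sigma_s = \langle a_1, \ldots, a_s \rangle$), and define $q' = q \perp \sigma_s$.

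It remains to verify that $q'$ is the desired counterexample. By construction, $\dim q' = n + s$. Locally, $q$ is a subform of $q'$, so for every $v \in V$ we have $i_W(q'_v) \geq i_W(q_v) \geq r$. Globally, $q'$ is itself a subform of $q \perp \sigma_i$ (writing $\sigma_i \simeq \sigma_s \perp \sigma'$ gives $q \perp \sigma_i \simeq q' \perp \sigma'$), and the same subform monotonicity yields $i_W(q') \leq i_W(q \perp \sigma_i) < j$. Hence $q'$ violates $\emph{LGP}(r,j)$ with respect to $V$, as required.

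The only delicate point is to recognize that Witt-index monotonicity with respect to orthogonal extension is being invoked in two opposite contexts—once to preserve the local lower bound $\geq r$, and once to propagate the global upper bound $< j$—so these two uses deserve to be separated cleanly in the write-up. Apart from this observation, the argument is essentially immediate from the definition of an $(i,j)$-realizing form combined with the hypothesis $n < m_{i,j}(k)$; I do not anticipate any genuine obstacle.
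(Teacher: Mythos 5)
Your proof is correct and takes essentially the same approach as the paper's: use $n < m_{i,j}(k)$ together with $i_W(q) < j$ to produce an $i$-dimensional $\sigma_i$ with $i_W(q \perp \sigma_i) < j$, then pass to an $s$-dimensional subform $\sigma_s$ of $\sigma_i$ and invoke Witt-index monotonicity for subforms on both the local side (to keep $\geq r$) and the global side (to keep $< j$). You make the subform-monotonicity step slightly more explicit than the paper does, but the argument is the same.
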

\begin{proof}
Let $q$ be an $n$-dimensional quadratic form over $k$ such that $i_W(q_v) \geq r$ for all $v \in V$ but $i_W(q) < j$. Because $\dim q < m_{i,j}(k)$ and $i_W(q) < j$, there must be an $i$-dimensional form $\sigma_i$ over~$k$ such that $i_W(q \perp \sigma_i) < j$. Let $s$ be any integer such that $1 \leq s \leq i$ and let $\sigma_s$ be any $s$-dimensional subform of $\sigma_i$. Then $i_W(q \perp \sigma_s) < j$, and for all $v \in V$ we have $i_W\left((q \perp \sigma_s)_v\right) \geq i_W(q_v) \geq r$. So $q \perp \sigma_s$ is an $(n+s)$-dimensional counterexample to $\lgp(r, j)$ over $k$ with respect to~$V$.
\end{proof}

\begin{lemma}
\label{less than refined m}
Let $k$ be a field of characteristic $\ne 2$ equipped with a non-empty set $V$ of non-trivial discrete valuations. Let $i, j \geq 1$ be positive integers, and let $n$ be a positive integer such that $n \leq m_{i,j}(k)$. If all $n$-dimensional quadratic forms over $k$ satisfy $\emph{LGP}(r, j)$ with respect to $V$ for some integer $r \geq 1$, then so do all quadratic forms over $k$ of dimension $< n$.
\end{lemma}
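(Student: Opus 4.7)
The plan is to argue by contrapositive. Suppose that $q'$ is an $n'$-dimensional counterexample to $\lgp(r,j)$ over $k$ with respect to $V$ for some $n' < n$; we produce an $n$-dimensional counterexample, contradicting the hypothesis that all $n$-dimensional forms over $k$ satisfy $\lgp(r,j)$ with respect to $V$.

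The key tool is Lemma \ref{new ce from old}, which (specialized to $s=1$) asserts that any $d$-dimensional counterexample with $d < m_{i,j}(k)$ can be extended to a $(d+1)$-dimensional counterexample. We apply this iteratively. Starting from the given $n'$-dimensional counterexample, the inequality $n' < n \leq m_{i,j}(k)$ yields $n' < m_{i,j}(k)$, so Lemma \ref{new ce from old} produces a counterexample of dimension $n' + 1$. We repeat: at each intermediate step the current dimension $d$ still satisfies $d < n \leq m_{i,j}(k)$, so $d < m_{i,j}(k)$ and the lemma continues to apply. After $n - n'$ iterations we arrive at an $n$-dimensional counterexample, as required. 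Formally, this is an induction on $n - n'$, with the base case $n - n' = 1$ being a single application of Lemma \ref{new ce from old}.

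I do not anticipate any real obstacle here, since the substantive content has already been packaged into Lemma \ref{new ce from old}. The only point to verify is that the strict inequality $d < m_{i,j}(k)$ required by that lemma is preserved throughout the iteration; this follows immediately from the hypothesis $n \leq m_{i,j}(k)$ because every dimension visited before the final step is at most $n-1 < m_{i,j}(k)$.
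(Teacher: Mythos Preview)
Your proof is correct and follows essentially the same approach as the paper's: both arguments use Lemma \ref{new ce from old} to promote a smaller-dimensional counterexample up to dimension $n$. The only cosmetic difference is that the paper picks the \emph{largest} $n^* < n$ admitting a counterexample and applies the lemma once to reach a contradiction, whereas you iterate from $n'$ up to $n$; these are interchangeable phrasings of the same argument.
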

\begin{proof}
By contradiction, suppose the lemma is false, and let $n^*$ be the largest integer $< n$ such that there exists an $n^*$-dimensional counterexample to $\lgp(r, j)$ over $k$ with respect to $V$. Our assumptions on $n$ imply $n^* < m_{i,j}(k)$. So by Lemma \ref{new ce from old} there is an $(n^*+1)$-dimensional counterexample to $\lgp(r,j)$ over $k$ with respect to $V$. This is a contradiction of our choice of $n^*$, so the proof is complete.
\end{proof}

\begin{theorem}
\label{going-down}
Let $k$ be a field of characteristic $\ne 2$, let $V$ be a non-empty set of non-trivial discrete valuations on $k$, let $i,j \geq 1$ be positive integers, and let $s$ be an integer such that $1 \leq s \leq i$. If all quadratic forms over $k$ of dimension $m_{i,j}(k) + s - 1$ satisfy $\emph{LGP}(r, j)$ with respect to $V$ for some integer $r \geq 1$, then so do all quadratic forms over $k$ of dimension $< m_{i,j}(k)$.
\end{theorem}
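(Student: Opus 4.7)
The plan is to combine Lemma \ref{new ce from old} and Lemma \ref{less than refined m}. The hypothesis gives us $\lgp(r,j)$ in dimension $m_{i,j}(k) + s - 1$, which lies somewhere in the band $[m_{i,j}(k),\, m_{i,j}(k) + i - 1]$, while we want $\lgp(r,j)$ in every dimension strictly below $m_{i,j}(k)$. So the task is to bridge the gap: first descend one step past $m_{i,j}(k)$, down to dimension $m_{i,j}(k)-1$, using the contrapositive of Lemma \ref{new ce from old}; then let Lemma \ref{less than refined m} propagate the conclusion downward to all smaller dimensions.

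First I would dispose of the degenerate case $m_{i,j}(k) = 1$: in that case there are no quadratic forms of dimension strictly less than $m_{i,j}(k)$ (all forms are assumed regular of dimension $\geq 1$), so the statement is vacuous. Assume henceforth $m_{i,j}(k) \geq 2$, and set $n := m_{i,j}(k) - 1 \geq 1$. Then $n < m_{i,j}(k)$ and $1 \leq s \leq i$, so Lemma \ref{new ce from old} applies to this choice of $n$ and $s$: if some $n$-dimensional quadratic form over $k$ were a counterexample to $\lgp(r,j)$ with respect to $V$, the lemma would produce an $(n+s)$-dimensional counterexample. But $n + s = m_{i,j}(k) + s - 1$, and the hypothesis asserts that no such counterexample exists in that dimension. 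Contradiction. Hence every $(m_{i,j}(k)-1)$-dimensional quadratic form over $k$ satisfies $\lgp(r,j)$ with respect to $V$.

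To finish, apply Lemma \ref{less than refined m} with $n = m_{i,j}(k) - 1$, which satisfies $n \leq m_{i,j}(k)$. The output is that every quadratic form over $k$ of dimension strictly less than $m_{i,j}(k) - 1$ also satisfies $\lgp(r,j)$ with respect to $V$. Combined with the previous paragraph, this exhausts all dimensions strictly less than $m_{i,j}(k)$, and the proof is complete. There is no real obstacle: the content is packaged entirely in the two preceding lemmas, and the only observation needed is that the contrapositive of Lemma \ref{new ce from old}, applied with $n = m_{i,j}(k) - 1$ and the given $s$, provides exactly the one-step descent required.
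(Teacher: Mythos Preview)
Your proof is correct and follows essentially the same approach as the paper: use the contrapositive of Lemma~\ref{new ce from old} with $n = m_{i,j}(k)-1$ to descend from dimension $m_{i,j}(k)+s-1$ to dimension $m_{i,j}(k)-1$, then invoke Lemma~\ref{less than refined m} to propagate downward. Your treatment is slightly more explicit about the degenerate case $m_{i,j}(k)=1$ and about combining the two pieces, but the argument is the same.
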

\begin{proof}
By Lemma \ref{less than refined m}, it suffices to show that all quadratic forms over $k$ of dimension $m_{i,j}(k) - 1$ satisfy $\lgp(r, j)$ with respect to $V$. By the contrapositive of Lemma \ref{new ce from old}, because all quadratic forms over $k$ of dimension $m_{i,j}(k) + s - 1$ satisfy $\lgp(r, j)$ with respect to $V$, then so do all quadratic forms over $k$ of dimension $m_{i,j}(k) - 1$, which completes the proof.
\end{proof}

\begin{remark}
There are certain situations in which Theorem \ref{going-down} could be particularly useful when studying $\lgp(1,1)$, i.e., the local-global principle for isotropy. Let $k$ be a field of characteristic~$\ne 2$ with $m(k) = u(k) < \infty$, and let $V$ be a non-empty set of non-trivial discrete valuations on $k$. By definition, any quadratic form over $k$ of dimension $> u(k)$ is isotropic over $k$, and therefore satisfies $\lgp(1,1)$ with respect to $V$. If all quadratic forms over $k$ of dimension~$m(k) = m_{1,1}(k)$ satisfy $\lgp(1,1)$ with respect to $V$, then by Theorem \ref{going-down}, so do all quadratic forms of dimension~$< m(k)$ over $k$. Therefore in order to show that all quadratic forms over $k$ satisfy $\lgp(1,1)$ with respect to~$V$, it suffices to show only that all quadratic forms of dimension $m(k) = u(k)$ do.
\end{remark}

To conclude this article, we ask whether more can be shown about the invariants $m_{i,1}$ than what we learned from Corollary \ref{m_{i,j} separates}. Recall that Corollary \ref{m_{i,j} separates} states that there are fields $k_1, k_2$ with $u(k_1) = u(k_2)$ and $m(k_1) = m(k_2)$, but $m_{i,1}(k_1) \ne m_{i,1}(k_2)$ for $i = 2,3$. So we are interested in the following variant of Question \ref{separate fields?}.
\begin{question}
\label{same m_{i,1}, different m_{n,1}}
Are there fields $k_1, k_2$ with $u(k_1) = u(k_2)$ and $m_{i,1}(k_1) = m_{i,1}(k_2)$ for all integers $1 \leq i < n$, but $m_{n,1}(k_1) \ne m_{n,1}(k_2)$ for some integer $n \geq 3$?
\end{question}
We will now show how the refined local-global principle for isotropy can be used to compute refined $m$-invariants, thereby providing a potential strategy for answering Question \ref{same m_{i,1}, different m_{n,1}}. First, we prove a lemma similar to \cite[Lemma~2.10]{universal}.
\begin{lemma}
    \label{using lgp(r,s) for m_{i,j}}
    Let $k$ be a field of characteristic $\ne 2$ equipped with a non-empty set $V$ of non-trivial discrete valuations. Suppose there exist integers $i,j,n \geq 1$ such that all $(n+i)$-dimensional quadratic forms over $k$ satisfy $\emph{LGP}(j,j)$ with respect to $V$. Let $q$ be an $n$-dimensional quadratic form over $k$ such that $i_W(q_v) \geq j$ for all $v \in V$. Then $i_W(q \perp \sigma_i) \geq j$ for all $i$-dimensional quadratic forms $\sigma_i$ over~$k$. If in addition $i_W(q) < j$, then $m_{i,j}(k) \leq n$.
\end{lemma}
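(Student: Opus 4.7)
The plan is to deduce both statements by a direct application of the hypothesis to the form $q \perp \sigma_i$, together with monotonicity of the Witt index under orthogonal sums.

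First, let $\sigma_i$ be an arbitrary $i$-dimensional quadratic form over $k$, and set $q' = q \perp \sigma_i$, which has dimension $n+i$. I would begin by observing that for any $v \in V$,
\[
    i_W(q'_v) = i_W\bigl((q \perp \sigma_i)_v\bigr) \geq i_W(q_v) \geq j,
\]
where the first inequality is the elementary fact that adjoining an orthogonal summand cannot decrease the Witt index (since any hyperbolic subform of $q_v$ remains a hyperbolic subform of $q'_v$), and the second is the hypothesis on $q$. Since $\dim q' = n+i$, the assumption that all $(n+i)$-dimensional quadratic forms over $k$ satisfy $\lgp(j,j)$ with respect to $V$ then yields $i_W(q') \geq j$, i.e.\ $i_W(q \perp \sigma_i) \geq j$. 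As $\sigma_i$ was arbitrary, this establishes the first conclusion.

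For the second conclusion, assume in addition that $i_W(q) < j$. Then $q$ itself meets all three requirements in Definition \ref{(i,j)-realizing form}: it is a regular quadratic form over $k$ of dimension $n \geq 1$, its Witt index satisfies $i_W(q) < j$, and by what we just proved, $i_W(q \perp \sigma_i) \geq j$ for every $i$-dimensional quadratic form $\sigma_i$ over $k$. Hence $q$ is an $(i,j)$-realizing form over $k$, and the definition of $m_{i,j}(k)$ as the infimum of the dimensions of such forms yields $m_{i,j}(k) \leq \dim q = n$, as required.

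There is no genuine obstacle here; the only point to be careful about is the direction of the Witt-index inequality under orthogonal sum (which is the opposite of the bound in Lemma \ref{Witt index of sum}). The content of the lemma is really just packaging: the refined local-global hypothesis is exactly strong enough to promote the local bound $i_W(q_v) \geq j$ to a global bound on $i_W(q \perp \sigma_i)$ uniformly in $\sigma_i$, which is precisely the defining property of an $(i,j)$-realizing form.
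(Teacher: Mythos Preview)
Your proof is correct and follows essentially the same approach as the paper's: you pass to $q' = q \perp \sigma_i$, use monotonicity of the Witt index to get $i_W(q'_v) \geq j$ for all $v$, invoke $\lgp(j,j)$ in dimension $n+i$, and then read off the $(i,j)$-realizing property of $q$. The only difference is that you spell out the monotonicity step and the appeal to Definition~\ref{(i,j)-realizing form} more explicitly than the paper does.
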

\begin{proof}
    The second statement follows immediately from the first since $q$ is an $(i,j)$-realizing form in this case, so it suffices to prove the first statement. Let $\sigma_i$ be an arbitrary $i$-dimensional form over~$k$ and consider the $(n+i)$-dimensional form $q \perp \sigma_i$ over $k$. Since $i_W(q_v) \geq j$ for all $v \in V$, we have $i_W((q \perp \sigma_i)_v) \geq j$ for all $v \in V$. This, by assumption, implies that $i_W(q \perp \sigma_i) \geq j$ over $k$.
\end{proof}
The next lemma is a generalization of \cite[Lemma~2.9]{universal}.
\begin{lemma}
    \label{globally realizing implies locally realizing}
    Let $k$ be any field of characteristic $\ne 2$, let $v$ be any non-trivial discrete valuation on $k$, and let $i,j \geq 1$ be any positive integers. Suppose $q$ is a quadratic form over $k$ such that $i_W(q \perp \sigma_i) \geq j$ for all $i$-dimensional forms $\sigma_i$ over $k$. Then for all $i$-dimensional forms $\sigma_i^v$ over~$k_v$, $i_W(q \perp \sigma_i^v) \geq j$ over $k_v$.
\end{lemma}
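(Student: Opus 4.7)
The plan is to reduce the statement to the hypothesis over $k$ by approximating an arbitrary $i$-dimensional form $\sigma_i^v$ over $k_v$ by a form defined over $k$ that becomes isometric to $\sigma_i^v$ after base change. The key tool is the density of $k$ in $k_v$ combined with the fact that $k_v^{\times 2}$ is an open subgroup of $k_v^{\times}$: since $\Char k \ne 2$, Hensel's lemma applied to $X^2 - u$ for units $u$ sufficiently close to $1$ shows that every element of some open neighborhood of $1$ in $\mathcal{O}_v^{\times}$ is a square, hence $b \cdot k_v^{\times 2}$ is an open neighborhood of $b$ for every $b \in k_v^{\times}$.

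Here is how I would carry it out. First diagonalize $\sigma_i^v \simeq \langle b_1,\ldots,b_i \rangle$ over $k_v$. For each $\ell = 1, \ldots, i$, using density of $k^{\times}$ in $k_v^{\times}$ and the openness of $b_\ell \cdot k_v^{\times 2}$, choose $a_\ell \in k^{\times}$ with $a_\ell \in b_\ell \cdot k_v^{\times 2}$. Then $\langle a_\ell \rangle \simeq \langle b_\ell \rangle$ over $k_v$ for each $\ell$, so setting $\sigma_i = \langle a_1, \ldots, a_i \rangle$ (a quadratic form defined over $k$) we have $(\sigma_i)_{k_v} \simeq \sigma_i^v$.

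Now apply the hypothesis to $\sigma_i$: we get $i_W(q \perp \sigma_i) \geq j$ over $k$. Base change to $k_v$: the Witt index can only increase, so
\[
    i_W\bigl((q \perp \sigma_i)_v\bigr) \geq i_W(q \perp \sigma_i) \geq j.
\]
Since $(q \perp \sigma_i)_v = q_v \perp (\sigma_i)_{k_v} \simeq q_v \perp \sigma_i^v$, we conclude $i_W(q \perp \sigma_i^v) \geq j$ over $k_v$, as required.

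The only potential obstacle is verifying the approximation step, but this is standard: for $\Char k_v \ne 2$ and any $b \in k_v^{\times}$, the polynomial $X^2 - b^{-1} a$ has a simple root in $k_v$ whenever $b^{-1}a$ is close enough to $1$ (Hensel's lemma applied to $X^2 - 1$), so $b \cdot k_v^{\times 2}$ contains an open neighborhood of $b$ in the $v$-adic topology, and density of $k$ in $k_v$ produces the required $a_\ell \in k^{\times}$.
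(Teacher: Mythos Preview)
Your proof is correct and follows essentially the same approach as the paper: approximate each diagonal entry of $\sigma_i^v$ by an element of $k^\times$ in the same square class of $k_v^\times$, obtaining a form $\sigma_i$ over $k$ with $(\sigma_i)_{k_v}\simeq\sigma_i^v$, then apply the hypothesis and base change. The paper phrases the approximation step via Krasner's Lemma rather than Hensel's Lemma, but the content is identical.
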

\begin{proof}
    We begin with an observation. Let $a_v \in k_v^{\times}$ be arbitrary. The field $k$ is dense in $k_v$, so by an application of Krasner's Lemma \cite[Proposition~7.61]{milne} we can find an element $a \in k^{\times}$ close enough to $a_v$ in $k_v$ to ensure that $a$ and $a_v$ belong to the same square class of $k_v$. That is, in $k_v$, $a/a_v$ is a square. 

    Now let $\sigma_i^v$ be any $i$-dimensional quadratic form over $k_v$. By the previous paragraph there is an $i$-dimensional quadratic form $\sigma_i$ over $k$ such that $\sigma_i \simeq \sigma_i^v$ over $k_v$. So $q \perp \sigma_i \simeq q \perp \sigma_i^v$ over $k_v$, and since $i_W(q \perp \sigma_i) \geq j$ over $k$, it follows that $i_W(q \perp \sigma_i^v) \geq j$ over $k_v$.
\end{proof}

We now give an example of how Lemmas \ref{using lgp(r,s) for m_{i,j}} and \ref{globally realizing implies locally realizing} can be used to compute the refined $m$-invariants of certain fields. It is not known if fields satisfying the assumptions of the following example exist. However, if they do exist, then they would provide a positive answer to Question~\ref{same m_{i,1}, different m_{n,1}} and show that for each $n \geq 2$ the invariant $m_{n,1}$ provides information beyond the $u$-invariant and the invariants~$m_{i,1}$ for $1 \leq i < n$.
\begin{example}
    \label{fields with different m_{i,j}}
    Let $n$ be any integer such that $n \geq 2$. Let $k_1,k_2$ be fields of characteristic $\ne 2$ such that $u(k_1) = u(k_2) = 2^{n+1}$, $m(k_1) = m(k_2) = 2^n$, and $m_{i,1}(k_1) = m_{i,1}(k_2)$ for all $i$ such that $1 \leq i \leq n-1$. Suppose $k_1$ is equipped with a non-empty set $V_1$ of non-trivial discrete valuations such that $m(k_{1,v_1}) > 2^n$ for all $v_1 \in V_1$ and suppose all $(2^n+1-n)$-dimensional quadratic forms over $k_1$ satisfy $\lgp(1,1)$ with respect to $V_1$. Also, suppose that $k_2$ is equipped with a non-empty set $V_2$ of non-trivial discrete valuations with respect to which there exists a $(2^n+1-n)$-dimensional counterexample to $\lgp(1,1)$ over $k_2$ but all $(2^n + 1)$-dimensional quadratic forms over $k_2$ satisfy $\lgp(1,1)$. Then we claim that $m_{n,1}(k_1) \ne m_{n,1}(k_2)$.

    To prove the claim, we first observe that $2^n+1-n \leq m_{n,1}(k_1), m_{n,1}(k_2) \leq 2^n$ by Lemma~\ref{initial inequalities}(a) and Corollary \ref{decreasing i to 1}. Now consider $k_2$. Then since $(2^n+1)$-dimensional forms over $k_2$ satisfy $\lgp(1,1)$ and there exists a $(2^n+1-n)$-dimensional counterexample to $\lgp(1,1)$ over $k_2$ with respect to~$V_2$, Lemma \ref{using lgp(r,s) for m_{i,j}} implies that $m_{n,1}(k_2) \leq 2^n+1-n$. Hence $m_{n,1}(k_2) = 2^n+1-n$.

    Now consider $k_1$, and by contradiction, assume $m_{n,1}(k_1) = m_{n,1}(k_2) = 2^n+1-n$. This assumption implies that there exists a $(2^n+1-n)$-dimensional $(n,1)$-realizing form $\varphi$ over $k_1$. Since $(2^n+1-n)$-dimensional forms over~$k_1$ satisfy $\lgp(1,1)$ with respect to $V_1$, we conclude that there is some $v_1^* \in V_1$ such that~$\varphi$ is anisotropic over $k_{1,v_1^*}$. Moreover, because $\varphi$ is an $(n,1)$-realizing form over~$k_1$, Lemma~\ref{globally realizing implies locally realizing} implies that~$\varphi$ is an $(n,1)$-realizing form over $k_{1,v_1^*}$, hence $m_{n,1}(k_{1,v_1^*}) \leq 2^n+1-n$. However, we assumed $m(k_{1,v_1^*}) >2^n$, so by Corollary \ref{decreasing i to 1}, $m_{n,1}(k_{1,v_1^*}) > 2^n+1-n$ and we have reached a contradiction. Therefore $m_{n,1}(k_1) \ne m_{n,1}(k_2)$, completing the proof of the claim.
\end{example}

\begin{ack} The author would like to thank David Harbater, Julia Hartmann, Daniel Krashen, and Florian Pop for inspiring discussions and comments regarding the material of this article. This paper is based on part of the author's Ph.D. thesis, completed under the supervision of David Harbater at the University of Pennsylvania.
\end{ack}

\providecommand{\bysame}{\leavevmode\hbox to3em{\hrulefill}\thinspace}
\providecommand{\href}[2]{#2}

\medskip

\noindent{\bf Author Information:}\\

\noindent Connor Cassady\\
Department of Mathematics, The Ohio State University, Columbus, OH 43210-1174, USA\\
email: cassady.82@osu.edu
\end{document}